\documentclass[12pt]{article}
\usepackage[top=1in,bottom=1in,left=1in,right=1in]{geometry}
\usepackage[T1]{fontenc}
\usepackage{lmodern}
\usepackage{microtype}
\usepackage{indentfirst}
\usepackage{amsfonts,amsmath,amsthm,amssymb}
\usepackage{mathrsfs}
\usepackage{enumitem}
\usepackage{xcolor}
\usepackage{array}
\usepackage{cite}
\usepackage{hyperref}

\hypersetup{
	colorlinks=true,
	linkcolor=red,
	filecolor=blue,
	urlcolor=red,
	citecolor=blue,
	pdftitle={The Fermat-Type Matrix Equation over GL2(Z): Solvability and Integral-Conjugacy Orbits},
	pdfauthor={Hongjian Li and Weilin Zhang},
	pdfsubject={The Fermat-type matrix equation over GL2(Z)},
	pdfkeywords={Fermat-type matrix equation, GL2Z, integral conjugacy, orbit decomposition, torsion matrices, Cayley-Hamilton theorem}
}

\allowdisplaybreaks
\numberwithin{equation}{section}

\newtheorem{theorem}{Theorem}[section]

\newtheorem{lemma}{Lemma}[section]
\newtheorem{proposition}{Proposition}[section]
\newtheorem{corollary}{Corollary}[section]
\newtheorem{remark}{Remark}[section]

\def\G1{G^\mathcal{C}}

\newcommand{\PSL}{{\rm PSL}}

\newcommand{\SL}{{\rm SL}}
\newcommand{\GL}{{\rm GL}}

\newcommand{\tr}{\operatorname{tr}}
\newcommand{\ord}{\operatorname{ord}}
\newcommand{\Id}{I_2}
\newcommand{\smat}[1]{\left(\begin{smallmatrix}#1\end{smallmatrix}\right)}

\begin{document}
\title{The Fermat-Type Matrix Equation over $\GL_2(\mathbb Z)$}
\author{
Hongjian Li$^{1,}$\footnote{E-mail\,$:$ lhj@gdufs.edu.cn.}\quad
Weilin Zhang$^{2,}$\footnote{Corresponding author. E-mail\,$:$ weilin@m.scnu.edu.cn.}\\
{\small\it $^{1}$School of Mathematics and Statistics, Guangdong University of Foreign Studies,}\\
{\small\it Guangzhou 510006, Guangdong, P. R. China}\\
{\small\it $^{2}$School of Mathematics and Information Science, Guangzhou University,}\\
{\small\it Guangzhou 510006, Guangdong, P. R. China}
}
\date{}
\maketitle
\date{}

\noindent{\bf Abstract}\quad
We determine the complete ordered solution set of the Fermat-type matrix
equation $X^n+Y^n=Z^n$ in $\GL_2(\mathbb Z)$ for every integer
$n\ge3$, expressed in terms of simultaneous integral-conjugacy orbits
while keeping the variable symmetries of the equation separate. The
equation is solvable if and only if $4\nmid n$ and $6\nmid n$. Thus,
in contrast with the determinant-one setting, odd multiples of $3$ are
solvable in the full group $\GL_2(\mathbb Z)$ and necessarily involve
determinant-$-1$ components. For every admissible even exponent, the
full ordered solution set has a canonical disjoint orbit decomposition
indexed by the same transversal that occurs in the matrix Pythagorean
equation. For odd exponents with $3\nmid n$, the commuting solutions
form exactly six simultaneous-conjugacy orbits. The noncommuting odd
solutions form exactly twenty simultaneous-conjugacy orbits when
$3\nmid n$, and exactly fourteen when $3\mid n$. Under the coarser
equivalence generated by signed variable permutations and simultaneous
integral conjugation, these noncommuting orbits collapse to four and
three classes, respectively. The proof combines the canonical orbit
classification of the matrix Pythagorean equation, Cayley--Hamilton
recurrences, commutator divisibility, trace-growth estimates, integral
conjugacy of torsion matrices, and an elementary symmetry-reduced trace
argument for the cubic equation.

\medskip \noindent{\bf Keywords} Fermat-type matrix equation; General linear group; Integral conjugacy; Orbit decomposition; Torsion matrices; Cayley--Hamilton theorem

\medskip
\noindent{\bf MR(2020) Subject Classification} 11D41; 15A24; 15A36; 20E06

\section{Introduction}\label{sec:introduction}

We study the ordered solutions of
\begin{equation}\label{eq:Fermat}
X^n+Y^n=Z^n,
\qquad X,Y,Z\in\GL_2(\mathbb Z),\quad n\ge3.
\end{equation}
In contrast with the classical scalar equation, the additive relation
among \(X^n\), \(Y^n\), and \(Z^n\) cannot in general be transferred to
a single commutative number field, because the matrices need not commute.
At the same time, the Cayley--Hamilton identity, determinant signs,
centralizer structures, and the restricted torsion of
\(\GL_2(\mathbb Z)\) impose strong arithmetic constraints.

Matrix versions of Fermat's equation have been studied for integral
\(2\times2\) matrices and several related classes; see Chien--Meng,
Grytczuk, Grytczuk--Kurzyd{\l}o, Le--Li, and Qin
\cite{ChienMeng2021,Grytczuk1995,GrytczukKurzydlo2011,LeLi1995,Qin1996}.
Vaserstein considered the problem in \(\GL_2(\mathbb Z)\) from the
viewpoint of noncommutative number theory \cite{Vaserstein1989}, while
Khazanov treated the determinant-one integral \(2\times2\) setting
\cite{Khazanov1995}.  In particular, Qin proved that
\(X^n+Y^n=Z^n\) has a solution in \(\SL_2(\mathbb Z)\) if and only if
\(3\nmid n\) and \(4\nmid n\) \cite{Qin1996}.  More recently,
Chien--Meng studied several structured classes of \(2\times2\) matrices
\cite{ChienMeng2021}, and Sarma gave a general construction of solutions
over \(M_m(\mathbb Z)\) \cite{Sarma2025}.  For the quadratic equation,
Arnold--Eydelzon gave a parametrization of integral matrix Pythagorean
triples \cite{ArnoldEydelzon2019}.

For commuting subsystems, Li--Yuan reduced Fermat- and Catalan-type
matrix equations in a fixed centralizer to Diophantine equations over
quadratic fields \cite{LiYuan2023FermatCatalan}.  The present problem is
more rigid in a different direction: we work in the full group
\(\GL_2(\mathbb Z)\), do not assume commutativity, and classify the
ordered solutions at the level of simultaneous integral-conjugacy
orbits.  The resulting solvability criterion also differs from the
\(\SL_2(\mathbb Z)\) criterion: the determinant-\(-1\) sector supplies
solutions for odd multiples of \(3\).

The main structural point of the present paper is that three different
operations must not be conflated.  First, the variables are ordered.
Second, the equation has sign and permutation symmetries, whose precise
form depends on the parity of \(n\).  Third, \(\GL_2(\mathbb Z)\) acts by
simultaneous conjugation.  We use the variable symmetries only as a
reduction device and then return to the ordered solution set, where the
final classification is expressed as a disjoint union of simultaneous-
conjugacy orbits.  This separation is essential: a union obtained by
applying noncanonical symmetry operations is generally an ordinary
union, whereas a canonical orbit decomposition is a disjoint union.

The commuting odd solutions arise from the unit group of the Eisenstein
integers.  For noncommuting odd solutions, Cayley--Hamilton recurrences,
commutator-divisibility relations, and trace-growth estimates force the
power triple to have finite order.  The remaining torsion problem is
finite, but the passage from the coarse signed-permutation classification
to ordered simultaneous-conjugacy orbits requires an additional orbit-
splitting analysis.  The cubic case lies outside the range of the general
growth estimates and is therefore settled by an elementary
symmetry-reduced small-trace argument.
All arithmetic reductions needed for this exceptional exponent are displayed
in the proof itself; no computer-assisted enumeration or external
computer-algebra verification is used.

For even exponents, the companion matrix Pythagorean classification
\cite{LiZhang2026Pythagorean} provides more than the quartic obstruction.
Its Theorem~1.1, Lemma~2.10, and Corollary~2.1 supply both the residual-orbit
criterion and the canonical transversal needed to classify the relative
position of the Eisenstein and Gaussian structures.  This replaces a
nonunique parametrization involving two
independent conjugating matrices by a genuine disjoint orbit
decomposition.

Throughout the paper, put
\[
G:=\GL_2(\mathbb Z)
\]
and define
\[
\mathcal F_n:=
\{(X,Y,Z)\in G^3:X^n+Y^n=Z^n\}.
\]
Let \(\mathcal F_n^{\mathrm{com}}\) be the subset of pairwise commuting
solutions and let
\(\mathcal F_n^{\mathrm{nc}}:=\mathcal F_n\setminus
\mathcal F_n^{\mathrm{com}}\).

The group \(G\) acts on \(\mathcal F_n\) by simultaneous integral
conjugation:
\[
P\cdot(X,Y,Z)
=(PXP^{-1},PYP^{-1},PZP^{-1}).
\]
For \(\mathbf X=(X,Y,Z)\in\mathcal F_n\), write
\[
\mathcal O_G(\mathbf X)
:=\{P\cdot\mathbf X:P\in G\}.
\]
These orbits are pairwise disjoint and partition \(\mathcal F_n\).
Here and throughout the paper, \(\cup\) denotes an ordinary union,
which may have overlaps, whereas \(\sqcup\) denotes a disjoint union.
We use \(\sqcup\) only after the corresponding orbit representatives
have been proved inequivalent.

When \(n\) is odd, set \(u_1=X\), \(u_2=Y\), and \(u_3=-Z\).  Then
\(u_1^n+u_2^n+u_3^n=0\).  Hence \(S_3\) acts on \(\mathcal F_n\) by
permuting \((u_1,u_2,u_3)\).  We denote the induced transformation by
\[
\rho_\pi(X,Y,Z)
:=\bigl(u_{\pi^{-1}(1)},u_{\pi^{-1}(2)},-u_{\pi^{-1}(3)}\bigr),
\qquad \pi\in S_3.
\]
In particular,
\begin{equation}\label{eq:rho-generators}
\begin{aligned}
\rho_{12}(X,Y,Z)&=(Y,X,Z),\\
\rho_{23}(X,Y,Z)&=(X,-Z,-Y),\\
\rho_{13}(X,Y,Z)&=(-Z,Y,-X).
\end{aligned}
\end{equation}
The overall sign involution
\(\iota(X,Y,Z)=(-X,-Y,-Z)\) also preserves the equation.  Thus the
odd equation carries an auxiliary symmetry group
\(\Gamma:=\langle\rho_\pi,\iota\rangle\cong S_3\times C_2\).
The \(\Gamma\)-action commutes with simultaneous conjugation.  We shall
use it to normalize determinant positions, but the final statements are
made for the ordered set \(\mathcal F_n\), not merely modulo \(\Gamma\).

Fix
\begin{equation}\label{eq:basic-matrices}
Q=
\begin{pmatrix}
1&-1\\
1&0
\end{pmatrix},
\qquad
R=Q^2=
\begin{pmatrix}
0&-1\\
1&-1
\end{pmatrix},
\qquad
J=
\begin{pmatrix}
0&-1\\
1&0
\end{pmatrix}.
\end{equation}
Then
\begin{equation}\label{eq:basic-relations}
Q^6=\Id,
\qquad R^3=\Id,
\qquad R+R^2=-\Id,
\qquad J^2=-\Id,
\qquad Q+Q^{-1}=\Id.
\end{equation}

The canonical transversal imported from the Pythagorean orbit
classification is
\begin{equation}\label{eq:T-transversal-intro}
\mathcal T
:=\left\{
\begin{pmatrix}
a&-a-c\\
a+b&-a
\end{pmatrix}:
(a,b,c)\in\mathbb Z^3,
\ ab+bc+ca=1,
\ a\le b,
\ a<c
\right\}.
\end{equation}
Every \(T\in\mathcal T\) satisfies \(T^2=-\Id\).

For the odd noncommuting part, let
\begin{equation}\label{eq:four-base-triples}
\begin{aligned}
\mathbf A&:=
\left(
\smat{-1&0\\-1&1},
\smat{0&1\\1&0},
\smat{-1&1\\0&1}
\right),\\[1ex]
\mathbf B&:=
\left(
\smat{-1&0\\-1&1},
J,
\smat{-1&-1\\0&1}
\right),\\[1ex]
\mathbf C_3&:=
\left(
\smat{-1&0\\0&1},
R,
\smat{-1&-1\\1&0}
\right),\\[1ex]
\mathbf C_4&:=
\left(
\smat{-1&-1\\0&1},
J,
\smat{-1&-2\\1&1}
\right).
\end{aligned}
\end{equation}
Their ordered component-order patterns are, respectively,
\((2,2,2)\), \((2,4,2)\), \((2,3,3)\), and \((2,4,4)\).

Let \(n\) be odd.  If \(\mathbf T=(T_1,T_2,T_3)\) is one of the
admissible torsion triples occurring below, define
\begin{equation}\label{eq:rt-definition}
\operatorname{rt}_n(\mathbf T)
:=\bigl(T_1^{e_1},T_2^{e_2},T_3^{e_3}\bigr),
\qquad
ne_i\equiv1\pmod{\ord(T_i)}.
\end{equation}
For an involution we take \(e_i=1\).  The map is defined for
\(\mathbf A\), \(\mathbf B\), and \(\mathbf C_4\) for every odd
\(n\), and for \(\mathbf C_3\) exactly when \(3\nmid n\).
Lemma~\ref{lem:finite-root-recovery} will show that these are the unique
integral \(n\)th roots of the corresponding nonscalar torsion
components.

Define the following finite set of ordered root triples:
\begin{equation}\label{eq:canonical-odd-reps}
\begin{aligned}
\mathscr R_n^{\mathrm{nc}}
:={}&
\{\epsilon\,\operatorname{rt}_n(\mathbf A):
\epsilon\in\{\pm1\}\}\\
&\cup
\{\epsilon\,\rho\operatorname{rt}_n(\mathbf B):
\epsilon\in\{\pm1\},\
\rho\in\{1,\rho_{12},\rho_{23}\}\}\\
&\cup
\{\epsilon\,\rho\operatorname{rt}_n(\mathbf C_4):
\epsilon\in\{\pm1\},\
\rho\in\{1,\rho_{12},\rho_{13}\}\}\\
&\cup
\begin{cases}
\{\rho_\pi\operatorname{rt}_n(\mathbf C_3):\pi\in S_3\},
&3\nmid n,\\
\varnothing,&3\mid n.
\end{cases}
\end{aligned}
\end{equation}
Here multiplication by \(\epsilon\) means an overall sign change of all
three components.  The unions in the definition of
\(\mathscr R_n^{\mathrm{nc}}\) are ordinary set unions; the theorem
below asserts, in particular, that the displayed triples are pairwise
inequivalent under simultaneous integral conjugation.

\begin{theorem}
\label{thm:full-classification}
Let \(n\ge3\).
\begin{enumerate}[label=\textup{(\roman*)}]
\item The solution set \(\mathcal F_n\) is nonempty if and only if
\(4\nmid n\) and \(6\nmid n\).

\item If \(n\equiv2,10\pmod{12}\), then
\begin{equation}\label{eq:even-main-decomposition}
\mathcal F_n
=
\mathop{\bigsqcup}\limits_{\epsilon_1,\epsilon_2\in\{\pm1\}}
\mathop{\bigsqcup}\limits_{T\in\mathcal T}
\mathcal O_G(-\epsilon_1R^2,-\epsilon_2R,T).
\end{equation}

\item If \(n\) is odd and \(3\nmid n\), let
\(e\in\mathbb Z/6\mathbb Z\) be determined by
\(ne\equiv1\pmod6\).  Then
\begin{equation}\label{eq:commuting-main-decomposition}
\mathcal F_n^{\mathrm{com}}
=
\mathop{\bigsqcup}\limits_{s\in\mathbb Z/6\mathbb Z}
\mathcal O_G(Q^{s+e},Q^{s-e},Q^s).
\end{equation}
If \(n\) is odd and \(3\mid n\), then
\(\mathcal F_n^{\mathrm{com}}=\varnothing\).

\item If \(n\) is odd, then
\begin{equation}\label{eq:odd-noncommuting-main-decomposition}
\mathcal F_n^{\mathrm{nc}}
=
\mathop{\bigsqcup}\limits_{\mathbf X\in
\mathscr R_n^{\mathrm{nc}}}
\mathcal O_G(\mathbf X).
\end{equation}
Consequently, \(\mathcal F_n^{\mathrm{nc}}\) consists of exactly
\(20\) simultaneous-conjugacy orbits when \(3\nmid n\), and exactly
\(14\) such orbits when \(3\mid n\).  Under the coarser equivalence
generated by \(\Gamma\) and simultaneous conjugation, these orbits
collapse to four classes represented by
\(\operatorname{rt}_n(\mathbf A),\operatorname{rt}_n(\mathbf B),
\operatorname{rt}_n(\mathbf C_3),\operatorname{rt}_n(\mathbf C_4)\)
when \(3\nmid n\), and to the three classes represented by
\(\operatorname{rt}_n(\mathbf A),\operatorname{rt}_n(\mathbf B),
\operatorname{rt}_n(\mathbf C_4)\) when \(3\mid n\).
\end{enumerate}
\end{theorem}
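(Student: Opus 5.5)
The proof splits by the parity of \(n\), and part (i) is read off at the end from parts (ii)--(iv) together with the obstruction results.

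\emph{Even exponents.} Put \(n=2m\) and \((U,V,W)=(X^m,Y^m,Z^m)\). Any solution of \eqref{eq:Fermat} then gives a matrix Pythagorean triple \(U^2+V^2=W^2\) in \(G\) whose components are \(m\)th powers.
\begin{enumerate}[label=\textup{(\alph*)}]
\item I will invoke Theorem~1.1 and Corollary~2.1 of \cite{LiZhang2026Pythagorean} for the full orbit classification of Pythagorean triples in \(G\).
\item In that classification the square \(W^2\) is \(-\Id\), the Gaussian part, coming from \(T\in\mathcal T\). The squares \(U^2,V^2\) are \(\pm R^{\pm1}\)-type Eisenstein units, and \(R^2+R=-\Id\) is exactly the relation realized by \((-\epsilon_1R^2,-\epsilon_2R,T)\).
\item Extracting \(m\)th roots inside \(G\) uses the restricted torsion of \(G\), which has element orders \(1,2,3,4,6\). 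A matrix with square \(-\Id\) has order \(4\), so it is an \(m\)th power only if \(m\) is odd; hence \(4\nmid n\). The Eisenstein components have order \(3\) or \(6\), which forces \(3\nmid m\), so \(6\nmid n\).
\item Conversely, for \(n\equiv2,10\pmod{12}\), raising to the \(n\)th power permutes the relevant torsion elements. The decomposition \eqref{eq:even-main-decomposition} is then transported from the canonical transversal by Lemma~2.10 of \cite{LiZhang2026Pythagorean}. Disjointness is inherited from the disjointness of the Pythagorean orbits.
\end{enumerate}

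\emph{Odd commuting solutions.} Pairwise commuting nonscalar matrices lie in a common centralizer, which is an order in a quadratic field or a split torus, so \(X,Y,Z\) correspond to units \(x,y,z\) with \(x^n+y^n=z^n\).
\begin{enumerate}[label=\textup{(\alph*)}]
\item I will use trace and height growth to rule out infinite-order units, via a Cayley--Hamilton recurrence \(\tr(A^n)\) compared against the equation. The split and scalar cases are handled by direct determinant and sign arguments.
\item This leaves torsion, where only the Eisenstein units \(\pm1,\pm\omega,\pm\omega^2\) admit a relation \(\zeta_1+\zeta_2=\zeta_3\). The classical identity \(1=-\omega+(-\omega^2)\) and its rotations produce \((Q^{s+e},Q^{s-e},Q^s)\) after taking \(n\)th roots with \(ne\equiv1\pmod 6\).
\item When \(3\mid n\), every \(n\)th power of a sixth root of unity is \(\pm1\), and \(\pm1\pm1=\pm1\) has no solution, so \(\mathcal F_n^{\mathrm{com}}=\varnothing\).
\item Disjointness of the six orbits holds because the triples differ in the exponent \(s\), and since \(Q\) is not \(G\)-conjugate to its inverse in a way fixing the whole triple, conjugation cannot change \(s\).
\end{enumerate}

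\emph{Odd noncommuting solutions.} This is the main obstacle.
\begin{enumerate}[label=\textup{(\alph*)}]
\item Write \(A=X^n\), \(B=Y^n\), \(C=Z^n\), so that \(A+B=C\) with \(A,B\) not commuting. By Cayley--Hamilton, \(\det(A+B)=\det A+\det B+\tr(A)\tr(B)-\tr(AB)\), which gives a trace identity. Commutator divisibility says that \([X,Y]\) controls \([A,B]\), with \(\tr\)-polynomial factors \(U_{n-1}(\tr X)\) dividing.
\item Trace growth of \(|\tr A|\) for \(|\tr X|\ge3\) then forces \(|\tr X|,|\tr Y|,|\tr Z|\le2\) for \(n\ge5\). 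The parabolic cases (\(|\tr|=2\), nonscalar) are excluded by unipotent growth, so all three components are torsion.
\item The torsion case is a finite problem. I will enumerate the triples of torsion elements of \(G\), of orders \(2,3,4,6\), summing correctly up to conjugacy, using the determinant-sign bookkeeping of the \(\Gamma\)-normalization. This yields the four base classes \(\mathbf A,\mathbf B,\mathbf C_3,\mathbf C_4\).
\item Lemma~\ref{lem:finite-root-recovery} recovers the roots uniquely as \(\operatorname{rt}_n\), and \(\mathbf C_3\) has no root when \(3\mid n\).
\item The \(\Gamma\)-orbit splitting is done by computing stabilizers. \(\mathbf A\) contributes \(2\) orbits, \(\mathbf B\) and \(\mathbf C_4\) contribute \(6\) each (the listed \(\rho\) being coset representatives of the stabilizer), and \(\mathbf C_3\) contributes \(6\), since \(\iota\) is absorbed by the \(S_3\)-action. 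This gives \(20\) orbits when \(3\nmid n\) and \(14\) when \(3\mid n\).
\item Inequivalence is checked by invariants: the ordered order pattern, the determinants, and the traces of pairwise products.
\end{enumerate}

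The cubic \(n=3\) escapes the growth bound. There I would impose \(\Gamma\)-normalized small-trace bounds directly from the trace identity and run a short case check, as the paper announces.
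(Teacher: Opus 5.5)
Your proposal has genuine gaps: the even-exponent reduction rests on a false claim, and the odd noncommuting part omits two essential steps.

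\textbf{Even exponents.} Your argument rests on a false claim in step (b). The torsion shape you describe, with $W^2=-\Id$ and Eisenstein $U^2,V^2$, holds only on the determinant-one part $\mathcal P_+$. The full Pythagorean solution set in $G$ also contains non-torsion triples. For example, take $U=\smat{1&1\\1&0}$, $V=\smat{0&1\\-1&1}$, $W=\smat{1&1\\0&1}$. Then $U^2+V^2=\smat{2&1\\1&1}+\smat{-1&1\\-1&0}=\smat{1&2\\0&1}=W^2$, with $\det U=-1$ and $W^2\ne-\Id$.

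Since $\det(X^m)=\det X$ for odd $m$, when $n\equiv2\pmod4$ the triple $(X^m,Y^m,Z^m)$ may a priori have determinant $-1$ components, and nothing in your plan rules this out. Consequently you have not proved:
\begin{itemize}
\item the obstruction $6\nmid n$ for $n\equiv6\pmod{12}$, since your order-$3$/$6$ argument only works inside $\mathcal P_+$;
\item the inclusion $\mathcal F_n\subseteq\mathcal P_+$ that (ii) requires for $n\equiv2,10\pmod{12}$.
\end{itemize}
Only the case $4\mid n$ goes through, because then $m$ is even and $(X^m,Y^m,Z^m)\in\mathcal P_+$.

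The missing idea is to square first: $(X^2,Y^2,Z^2)\in\SL_2(\mathbb Z)^3$ solves the odd $m$th-power equation. For $6\mid n$, the paper normalizes $(X^n,Y^n,Z^n)$ to $(A,AR,-AR^2)$. It then uses the quotient $\PSL_2(\mathbb Z)\cong C_2*C_3\to C_3$, which kills $m$th powers when $3\mid m$. For $n\equiv2,10\pmod{12}$, it applies the exclusion of all-positive noncommuting odd solutions to get $[X^n,Y^n]=0$. Hence $X^n\in C_G(R)$, and a short admissibility table finishes.

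\textbf{Odd noncommuting reduction.} There are two further holes here.

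First, commutator divisibility bounds nothing unless $\det[A,B]\ne0$, and $[A,B]\ne0$ does not give this. The relevant forms, such as $3-a^2-ab-b^2$ and $-a^2+ab+b^2-5$, have integer zeros. The latter has zeros with $|a|$ arbitrarily large, via a Pell equation. So trace data alone cannot exclude $\Delta=0$. The paper needs two separate lemmas:
\begin{itemize}
\item in $\SL_2(\mathbb Z)$, a singular commutator forces commuting;
\item in mixed-determinant patterns, a common eigenline would give $(\mu/\lambda)^n\in\{\pm\varphi^{\pm1}\}$ with $\mu/\lambda$ of degree at most $4$, contradicting the degree $2n$ of such roots.
\end{itemize}
Also, ``$|\tr|\le2$ and no parabolics'' does not give torsion for determinant $-1$ components of trace $\pm1,\pm2$. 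There one needs the bound $|\tr X^n|\ge11$.

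Second, the invariants in your step (f) cannot separate $\mathbf T$ from $-\mathbf T$ for $\mathbf A,\mathbf B,\mathbf C_4$. Each such pair is conjugate over $\mathbb Q$; for instance, $\smat{-1&2\\-2&1}$ conjugates $\mathbf A$ to $-\mathbf A$. So order patterns, determinants and all trace invariants coincide. The counts $2,6,6$ require an integral argument: every rational intertwiner is $\lambda P_0$ with determinant $3\lambda^2$ or $-5\lambda^2$, and this is never $\pm1$.
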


\begin{corollary}\label{thm:main}
Let \(n\ge3\).  Equation~\eqref{eq:Fermat} has a solution in
\(\GL_2(\mathbb Z)\) if and only if \(4\nmid n\) and \(6\nmid n\).
\end{corollary}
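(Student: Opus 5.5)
The corollary is exactly part (i) of Theorem~\ref{thm:full-classification}, so my plan is to read it off that statement and to check that the explicit data in parts (ii)--(iv) really cover the claimed exponents.

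\emph{Existence.} Suppose \(4\nmid n\) and \(6\nmid n\). Then either \(n\) is odd, or \(n\equiv2,10\pmod{12}\).
\begin{itemize}
\item If \(n\) is odd, I will use \(\operatorname{rt}_n(\mathbf A)\). Its components are involutions, so \(e_i=1\) and \(\operatorname{rt}_n(\mathbf A)=\mathbf A\). For odd \(n\) the equation \(X^n+Y^n=Z^n\) then reduces to \(X+Y=Z\). A direct addition checks this: \(\smat{-1&0\\-1&1}+\smat{0&1\\1&0}=\smat{-1&1\\0&1}\). Each of the three matrices has determinant \(-1\), so the triple lies in \(G^3\).
\item If \(n\equiv2,10\pmod{12}\), then \(n\equiv2\) or \(4\pmod 6\) and \(n\equiv2\pmod4\). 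Take \(T\in\mathcal T\), for instance \((a,b,c)=(0,1,1)\), which gives \(T=\smat{0&-1\\1&0}=J\). Since \(T^2=-\Id\) and \(n\equiv2\pmod4\), we get \(T^n=-\Id\). For \(R\) we have \(R^n=R^{n\bmod 3}\), which is \(R^2\) or \(R\). Hence \((-R^2)^n+(-R)^n=R^{2n}+R^n=R+R^2\) (in some order), and this equals \(-\Id=T^n\) by \eqref{eq:basic-relations}. So \(\mathcal F_n\) is nonempty.
\end{itemize}

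\emph{Nonexistence.} Suppose \(4\mid n\) or \(6\mid n\). In both cases \(n\) is even and \(n\not\equiv2,10\pmod{12}\). Part (i) of the theorem then gives \(\mathcal F_n=\varnothing\), and the corollary follows by quoting it.

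For a self-contained argument, the reduction would go as follows. Any solution \((X,Y,Z)\) with \(n=2m\) yields a matrix Pythagorean triple \((X^m,Y^m,Z^m)\). The companion classification \cite{LiZhang2026Pythagorean} forces this triple into the residual orbits through \((-\epsilon_1R^2,-\epsilon_2R,T)\). Then:
\begin{itemize}
\item \(Z^m\) is conjugate to \(T\) with \(T^2=-\Id\), i.e.\ an element of order \(4\). An order-4 element is an \(m\)th power in \(G\) only when \(m\) is odd, since the only finite orders in \(G\) are \(1,2,3,4,6\). So \(m\) must be odd, which excludes \(4\mid n\).
\item \(X^m\) is of order \(3\) up to sign. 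For it to be an \(m\)th power, \(3\nmid m\) is required, which excludes \(6\mid n\) once \(m\) is odd.
\end{itemize}

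The main obstacle, if one does not simply cite the theorem, is this root-extraction step. One has to show that an order-3 or order-4 torsion element cannot be an \(m\)th power of an infinite-order element of \(G\). I would handle it by noting that an infinite-order \(X\) has \(X^m\) either of infinite order or equal to \(\pm\Id\). Since \(\pm\Id\) is excluded by the orbit form, the root must itself be torsion, and the problem reduces to the finite list of orders \(1,2,3,4,6\).
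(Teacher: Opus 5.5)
\textbf{There is a genuine gap in the $6\mid n$ branch.} Your existence half is fine. It is essentially Proposition~\ref{prop:existence-construction}: your even-case triple $(-R^2,-R,J)$ differs from the paper's $(Q^u,Q^v,J)$ only in the choice of Eisenstein powers.

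The nonexistence half cannot be obtained by quoting part~(i) of Theorem~\ref{thm:full-classification}. The corollary \emph{is} part~(i), and the paper proves that part precisely through Propositions~\ref{prop:no-4-divides}, \ref{prop:no-6-divides} and~\ref{prop:existence-construction}. So your self-contained sketch has to carry the argument.

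Its $4\mid n$ branch does. With $n=2m$ and $m$ even, $(X^m,Y^m,Z^m)\in\mathcal P_+$. Proposition~\ref{prop:pythagorean-orbit-input} then gives $(Z^m)^2=-\Id$, and an element of order $4$ is not an $m$th power when $m$ is even. This is the paper's argument with slightly different bookkeeping.

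The ``main obstacle'' you flag is not one. If $X^m$ has finite order $d$, then $X^{md}=\Id$, so $X$ is torsion. Your dichotomy ``infinite order or $\pm\Id$'' is also muddled, since an infinite-order $X$ never has $X^m=\pm\Id$.

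The gap is this. Once $4\mid n$ is excluded, you are in the case $n=2m$ with $m$ odd and $3\mid m$. Then $\det(X^m)=\det X$, which may be $-1$. So $(X^m,Y^m,Z^m)$ is a Pythagorean triple in $\GL_2(\mathbb Z)^3$ that need not lie in $\mathcal P_+$. The transversal $(-\epsilon_1R^2,-\epsilon_2R,T)$ is only the determinant-one piece of the companion classification, which also has canonical pieces with determinant patterns $(-1,1,1)$ and $(1,-1,1)$. Your assertion that $X^m$ has order $3$ up to sign is therefore unsupported exactly in the case that matters. Determinant $-1$ components are what make odd multiples of $3$ solvable, so they cannot be waved away.

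The paper avoids this by never taking $m$th roots. It works with $A=X^n$, $B=Y^n$, $C=Z^n$, which lie in $\SL_2(\mathbb Z)$ for even $n$ whatever the determinants of $X,Y,Z$.
\begin{itemize}
\item Put $U=A^{-1}B$. Then $\det U=\det(\Id+U)=1$, so $U^2+U+\Id=0$ and $U$ is integrally conjugate to $R$.
\item This gives $B=AR$ and $C=-AR^2$.
\item $A,B,C$ are $m$th powers of $X^2,Y^2,Z^2\in\SL_2(\mathbb Z)$. The homomorphism $\pi:\PSL_2(\mathbb Z)\cong C_2*C_3\to C_3$ with $\overline J\mapsto0$ and $\overline R\mapsto1$ kills all $m$th powers when $3\mid m$.
\item That contradicts $\pi(\overline B)-\pi(\overline A)=\pi(\overline R)=1$.
\end{itemize}
This is Proposition~\ref{prop:no-6-divides}, and it treats all determinant patterns at once.
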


The paper is organized into four sections.  Section~\ref{sec:preliminaries}
collects the torsion, quadratic-order, Cayley--Hamilton, commutator,
growth, centralizer, and root-recovery tools.  Section~\ref{sec:main-proof}
proves Theorem~\ref{thm:full-classification}.  More precisely,
Subsection~\ref{sec:solvability} determines the solvable exponents,
Subsection~\ref{sec:finite-orbits} classifies the noncommuting torsion
triples and their ordered simultaneous-conjugacy orbits,
Subsection~\ref{sec:odd-classification} treats odd exponents, and
Subsection~\ref{sec:even-classification} treats the admissible even
exponents; the final subsection assembles these results into the proof
of the theorem.  The exceptional cubic exponent is handled entirely in
the main text by an explicit four-pattern arithmetic reduction.

\section{Preliminaries}\label{sec:preliminaries}

\subsection{Torsion matrices and quadratic orders}

\begin{lemma}\label{lem:finite-orders}
If $T\in\GL_2(\mathbb Z)$ has finite order, then
$\ord(T)\in\{1,2,3,4,6\}$. Moreover, if $\det T=-1$, then
$T^2=\Id$; if $\ord(T)=4$, then $T^2=-\Id$; and if
$\ord(T)=6$, then $T^3=-\Id$.
\end{lemma}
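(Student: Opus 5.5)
The plan is to work with the characteristic polynomial $\chi_T(x)=x^2-tx+d$, where $t=\tr T\in\mathbb Z$ and $d=\det T\in\{\pm1\}$. If $T^m=\Id$, then the minimal polynomial of $T$ divides $x^m-1$, which is separable over $\mathbb C$. Hence $T$ is diagonalizable over $\mathbb C$, and its eigenvalues $\lambda_1,\lambda_2$ are roots of unity. Since $|t|=|\lambda_1+\lambda_2|\le2$, the trace lies in $\{-2,-1,0,1,2\}$. This leaves finitely many integer characteristic polynomials to examine.

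\emph{Case $d=1$.} The polynomial $\chi_T$ is one of the following.
\begin{itemize}
\item $x^2-2x+1$: both eigenvalues equal $1$, and diagonalizability forces $T=\Id$, of order $1$.
\item $x^2+2x+1$: similarly $T=-\Id$, of order $2$.
\item $x^2+1$: then $T^2=-\Id$ by Cayley--Hamilton, so $\ord(T)=4$.
\item $x^2+x+1$: then $T^3=\Id$ with $T\ne\Id$, so $\ord(T)=3$.
\item $x^2-x+1$: since $(x+1)(x^2-x+1)=x^3+1$, we get $T^3=-\Id$ and $T^6=\Id$. Neither $T$ nor $T^2$ is $\pm\Id$, because $\chi_T$ is irreducible, so $\ord(T)=6$.
\end{itemize}

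\emph{Case $d=-1$.} The eigenvalues are roots of unity with $\lambda_1\lambda_2=-1$. If they are non-real, they are complex conjugates, so $\lambda_1\lambda_2=|\lambda_1|^2=1$, a contradiction. Hence both are real, so $\{\lambda_1,\lambda_2\}=\{1,-1\}$ and $t=0$. Then $\chi_T=x^2-1$ and Cayley--Hamilton gives $T^2=\Id$.

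The possible orders are therefore $1,2,3,4,6$. The extra assertions follow from the list:
\begin{itemize}
\item $\det T=-1$ forces $T^2=\Id$.
\item Order $4$ occurs only for $\chi_T=x^2+1$. The cases $\pm\Id$ and determinant $-1$ give order at most $2$, and the other polynomials give orders $3$ and $6$. So $\ord(T)=4$ implies $T^2=-\Id$.
\item Order $6$ occurs only for $\chi_T=x^2-x+1$, which gives $T^3=-\Id$.
\end{itemize}

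The only step requiring care is pinning down the characteristic polynomials in the determinant $-1$ case and the scalar cases $t=\pm2$. Both are handled by the diagonalizability argument. The rest is direct use of Cayley--Hamilton.
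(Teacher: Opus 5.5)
Your proof is correct. It shares the paper's skeleton: diagonalizability, roots-of-unity eigenvalues, identifying the characteristic polynomial, then Cayley--Hamilton. Where it differs is in how the characteristic polynomial is pinned down.

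\begin{itemize}
\item \textbf{The paper} argues that $\chi_T\in\mathbb Z[x]$ must be a product of cyclotomic polynomials of total degree $2$. It lists the $\Phi_m$ with $\deg\Phi_m=\varphi(m)\le2$. It then settles the case $\det T=-1$ by noting that $\Phi_3,\Phi_4,\Phi_6,(x\pm1)^2$ all have constant term $1$.
\item \textbf{Your route} bounds $|\tr T|\le2$ and enumerates the ten candidates $x^2-tx\pm1$ directly. You dispose of $\det T=-1$ by observing that non-real roots of unity come in conjugate pairs with product $|\lambda|^2=1$.
\end{itemize}

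The paper's version is shorter, but it tacitly relies on the irreducibility of $\Phi_m$ over $\mathbb Q$, so that a primitive $m$th root of unity has degree $\varphi(m)$. It also extends naturally to $k\times k$ matrices. Your version is self-contained and uses only $|\lambda|=1$ and complex conjugation, though it is tailored to dimension $2$. It also makes explicit two facts the paper leaves implicit: that order $4$ occurs only for $\chi_T=x^2+1$, and that $x^2-x+1$ gives order exactly $6$.
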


\begin{proof}
A finite-order matrix over a field of characteristic zero is
diagonalizable, and all of its eigenvalues are roots of unity. Since the
characteristic polynomial of $T$ lies in $\mathbb Z[x]$ and has degree
$2$, the total degree of the cyclotomic factors that occur is at most
$2$. The cyclotomic polynomials of degree at most $2$ are
\[
\begin{aligned}
\Phi_1(x)&=x-1,
&\Phi_2(x)&=x+1,
&\Phi_3(x)&=x^2+x+1,\\
\Phi_4(x)&=x^2+1,
&\Phi_6(x)&=x^2-x+1.
\end{aligned}
\]
Hence the order of $T$ is one of $1,2,3,4,6$. If $\det T=-1$,
the preceding cyclotomic classification shows that the characteristic
polynomial cannot be $\Phi_3,\Phi_4,$ or $\Phi_6$, nor can it be
$(x-1)^2$ or $(x+1)^2$. Thus the only possibility is
$(x-1)(x+1)=x^2-1$, and $T^2=\Id$. Finally,
$\Phi_4(T)=0$ gives $T^2=-\Id$, while $\Phi_6(T)=0$ gives
$T^2-T+\Id=0$ and hence $T^3=-\Id$.
\end{proof}

\begin{lemma}\label{lem:quadratic-euclidean}
The rings $\mathbb Z[i]$ and $\mathbb Z[\omega]$, where
$\omega^2+\omega+1=0$, are Euclidean domains with respect to the norms
$N(a+bi)=a^2+b^2$ and $N(a+b\omega)=a^2-ab+b^2$, respectively.
Consequently, both are principal ideal domains.
\end{lemma}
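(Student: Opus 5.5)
The plan is to check the Euclidean property directly from the complex embedding. Once that is done, the PID conclusion follows by the standard argument.

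Regard $\mathbb Z[i]$ and $\mathbb Z[\omega]$ as subrings of $\mathbb C$, with $\omega=e^{2\pi i/3}$. Then both norms are the restriction of $z\mapsto |z|^2$. Indeed, $|a+bi|^2=a^2+b^2$. For the second ring, $|a+b\omega|^2=(a+b\omega)(a+b\bar\omega)=a^2+ab(\omega+\bar\omega)+b^2=a^2-ab+b^2$, using $\omega+\bar\omega=-1$ and $\omega\bar\omega=1$. Hence $N$ is multiplicative, takes nonnegative integer values, and vanishes only at $0$.

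For division with remainder, let $\alpha,\beta$ lie in the ring with $\beta\ne0$. Write $\alpha/\beta=x+yi$, respectively $x+y\omega$, with $x,y\in\mathbb Q$. Choose integers $m,k$ nearest to $x,y$, so that $|x-m|\le\tfrac12$ and $|y-k|\le\tfrac12$. Put $\kappa=m+ki$ (resp. $m+k\omega$) and $\varrho=\alpha-\kappa\beta$. Then $N(\varrho)=N(\beta)\,N(\alpha/\beta-\kappa)$, where $N$ is extended to $\mathbb Q(i)$ or $\mathbb Q(\omega)$ by the same formula. Writing $s=x-m$ and $t=y-k$, we get $s^2+t^2\le\tfrac12<1$ in the Gaussian case. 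In the Eisenstein case, $s^2-st+t^2\le s^2+|s||t|+t^2\le\tfrac34<1$. Thus $N(\varrho)<N(\beta)$, so both rings are Euclidean.

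The remaining step is to show that a Euclidean domain is a PID. Given a nonzero ideal $I$, pick $\beta\in I\setminus\{0\}$ with $N(\beta)$ minimal. For any $\alpha\in I$, the remainder $\varrho=\alpha-\kappa\beta$ also lies in $I$ and has $N(\varrho)<N(\beta)$. By minimality $\varrho=0$, so $I=(\beta)$.

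There is no real obstacle here. The only point needing care is the bound $\tfrac34$ for the non-diagonal form $a^2-ab+b^2$, and the absolute-value estimate above handles it.
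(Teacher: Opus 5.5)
Your proposal is correct and follows essentially the same route as the paper: round the coordinates of $\alpha/\beta$ to the nearest integers, bound the residual norm by $\tfrac12$ (Gaussian) and $\tfrac34$ (Eisenstein), and use multiplicativity to get $N(\varrho)<N(\beta)$. Your only additions are the explicit justification of multiplicativity via $N(z)=|z|^2$ and the short proof that a Euclidean domain is a PID, both of which the paper takes as standard.
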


\begin{proof}
First consider $\mathbb Z[i]$. Given
$z=x+yi\in\mathbb Q(i)$, choose $m,n\in\mathbb Z$ such that
$|x-m|\le\frac12$ and $|y-n|\le\frac12$. With $q=m+ni$, we have
$N(z-q)=(x-m)^2+(y-n)^2\le\frac12<1$. Thus, for
$\alpha,\beta\in\mathbb Z[i]$ with $\beta\ne0$, apply this estimate to
$z=\alpha/\beta$ and put $r=\alpha-q\beta$. Then
$\alpha=q\beta+r$ and
$N(r)=N(\beta)N(z-q)<N(\beta)$. Hence $\mathbb Z[i]$ is Euclidean with
respect to $N$.

Now consider $\mathbb Z[\omega]$. Given
$z=x+y\omega\in\mathbb Q(\omega)$, choose $m,n\in\mathbb Z$ such that
$|x-m|\le\frac12$ and $|y-n|\le\frac12$. Put
$a=x-m$, $b=y-n$, and $q=m+n\omega$. Then $|a|,|b|\le1/2$ and
$N(z-q)=a^2-ab+b^2$. On the square $|a|,|b|\le1/2$, this quadratic
form is at most $\frac14+\frac14+\frac14=\frac34<1$. Therefore, for
all $\alpha,\beta\in\mathbb Z[\omega]$ with $\beta\ne0$, there exist
$q,r\in\mathbb Z[\omega]$ such that
$\alpha=q\beta+r$ and $N(r)<N(\beta)$. Thus $\mathbb Z[\omega]$ is
also Euclidean. Every Euclidean domain is a principal ideal domain.
\end{proof}

\subsection{Cayley--Hamilton recurrences and commutator divisibility}

For $\varepsilon\in\{\pm1\}$, define a sequence of polynomials by
\begin{equation}\label{eq:U-recurrence}
U_0(t,\varepsilon)=0,
\qquad U_1(t,\varepsilon)=1,
\qquad
U_{k+1}(t,\varepsilon)=tU_k(t,\varepsilon)-\varepsilon U_{k-1}(t,\varepsilon).
\end{equation}

\begin{lemma}\label{lem:CH-expansion}
Let $M\in\GL_2(\mathbb Z)$, and set
$t=\tr M$ and $\varepsilon=\det M$. For every $n\ge1$,
\begin{equation}\label{eq:CH-power}
M^n=U_n(t,\varepsilon)M-
\varepsilon U_{n-1}(t,\varepsilon)\Id.
\end{equation}
Moreover, if $d_n=\tr(M^n)$, then
\begin{equation}\label{eq:trace-discriminant}
d_n^2-4\varepsilon^n
=(t^2-4\varepsilon)U_n(t,\varepsilon)^2.
\end{equation}
\end{lemma}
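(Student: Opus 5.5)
We prove \eqref{eq:CH-power} by induction on $n$, using the Cayley--Hamilton identity $M^2=tM-\varepsilon\Id$. For $n=1$ the right-hand side is $U_1M-\varepsilon U_0\Id=M$, as required. Suppose \eqref{eq:CH-power} holds for some $n\ge1$. Multiplying by $M$ and substituting $M^2=tM-\varepsilon\Id$ gives
\[
M^{n+1}=U_n M^2-\varepsilon U_{n-1}M
=(tU_n-\varepsilon U_{n-1})M-\varepsilon U_n\Id
=U_{n+1}M-\varepsilon U_n\Id,
\]
which is \eqref{eq:CH-power} for $n+1$, by the recurrence \eqref{eq:U-recurrence}.

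For \eqref{eq:trace-discriminant} we pass to eigenvalues. Let $\lambda,\mu$ be the eigenvalues of $M$ in $\mathbb C$, so $\lambda+\mu=t$ and $\lambda\mu=\varepsilon$.

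First we establish the Binet-type identity $(\lambda-\mu)U_n=\lambda^n-\mu^n$ for all $n\ge0$. Both sides vanish at $n=0$ and equal $\lambda-\mu$ at $n=1$. The sequence $s_k=\lambda^k-\mu^k$ satisfies $s_{k+1}=ts_k-\varepsilon s_{k-1}$, since $\lambda$ and $\mu$ are roots of $x^2-tx+\varepsilon$. Hence $s_k$ and $(\lambda-\mu)U_k$ obey the same linear recurrence with the same initial values, and they coincide for all $k$. This argument also covers the repeated-root case $\lambda=\mu$, where both sides are identically zero.

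Now $d_n=\lambda^n+\mu^n$, because $M^n$ has eigenvalues $\lambda^n,\mu^n$. Therefore
\[
d_n^2-4\varepsilon^n=(\lambda^n+\mu^n)^2-4(\lambda\mu)^n=(\lambda^n-\mu^n)^2
=(\lambda-\mu)^2U_n^2.
\]
Finally, $(\lambda-\mu)^2=(\lambda+\mu)^2-4\lambda\mu=t^2-4\varepsilon$, which gives \eqref{eq:trace-discriminant}.

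One can avoid eigenvalues by taking the trace in \eqref{eq:CH-power}, which gives $d_n=tU_n-2\varepsilon U_{n-1}$. The identity then reduces to $U_n^2-tU_nU_{n-1}+\varepsilon U_{n-1}^2=\varepsilon^{n-1}$, a Cassini-type invariant that can itself be proved by induction. The eigenvalue route is shorter, however. The only point needing care is that it remains valid for repeated or non-real eigenvalues, and the recurrence-uniqueness argument above handles this without dividing by $\lambda-\mu$.
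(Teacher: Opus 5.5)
Your proof is correct and follows the paper's route: induction via Cayley--Hamilton for \eqref{eq:CH-power}, then the eigenvalues $\lambda,\mu$ together with $d_n=\lambda^n+\mu^n$ for \eqref{eq:trace-discriminant}. The only difference is cosmetic. The paper treats $\lambda=\mu$ as a separate case, using $U_n=n\lambda^{n-1}$, whereas your identity $(\lambda-\mu)U_n=\lambda^n-\mu^n$, proved by matching recurrences, covers both cases at once without dividing by $\lambda-\mu$.
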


\begin{proof}
The Cayley--Hamilton identity $M^2-tM+\varepsilon\Id=0$, together
with the recurrence~\eqref{eq:U-recurrence}, proves
\eqref{eq:CH-power} by induction on $n$.

Let $\lambda,\mu$ be the eigenvalues of $M$ in an algebraic closure.
If $\lambda\ne\mu$, then
$U_n(t,\varepsilon)=(\lambda^n-\mu^n)/(\lambda-\mu)$ and
$d_n=\lambda^n+\mu^n$. Therefore
\[
\begin{aligned}
d_n^2-4\varepsilon^n
&=(\lambda^n+\mu^n)^2-4(\lambda\mu)^n\\
&=(\lambda^n-\mu^n)^2\\
&=(\lambda-\mu)^2U_n(t,\varepsilon)^2\\
&=(t^2-4\varepsilon)U_n(t,\varepsilon)^2.
\end{aligned}
\]
If $\lambda=\mu$, then $t^2=4\varepsilon$,
$U_n(t,\varepsilon)=n\lambda^{n-1}$, and $d_n=2\lambda^n$. Both sides
of~\eqref{eq:trace-discriminant} are then zero. Thus the identity holds
in all cases.
\end{proof}

Put $A=X^n$, $B=Y^n$, and $C=Z^n$, so that $A+B=C$.
Denote by $p,q,r$ the coefficients of $X,Y,Z$, respectively, in the
expansion~\eqref{eq:CH-power}. Then $[A,B]=pq[X,Y]$. Since
$[A,C]=[A,B]$ and $[B,C]=-[A,B]$, we also have
$[A,C]=pr[X,Z]$ and $[B,C]=qr[Y,Z]$.

\begin{lemma}\label{lem:comm-divisibility}
Let $\Delta=\det[A,B]$. Then the following three exact identities hold:
\begin{equation}\label{eq:three-commutator-identities}
\Delta
=p^2q^2\det[X,Y]
=p^2r^2\det[X,Z]
=q^2r^2\det[Y,Z].
\end{equation}
In particular, if $pqr\ne0$, they imply the divisibility relations
\begin{equation}\label{eq:three-divisibilities}
p^2q^2\mid\Delta,
\qquad
p^2r^2\mid\Delta,
\qquad
q^2r^2\mid\Delta.
\end{equation}
\end{lemma}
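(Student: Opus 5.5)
The plan is to compute each commutator of powers directly from the Cayley--Hamilton expansion~\eqref{eq:CH-power} and then take determinants.

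By Lemma~\ref{lem:CH-expansion} we may write
\[
A=pX+\alpha\Id,\qquad B=qY+\beta\Id,\qquad C=rZ+\gamma\Id,
\]
with integers $\alpha,\beta,\gamma$ (namely $-\varepsilon U_{n-1}$ for the relevant trace and determinant). Scalar matrices are central and the commutator is bilinear, so
\[
[A,B]=pq[X,Y],\qquad [A,C]=pr[X,Z],\qquad [B,C]=qr[Y,Z].
\]
From $C=A+B$ we get $[A,C]=[A,A]+[A,B]=[A,B]$ and $[B,C]=[B,A]=-[A,B]$.

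Next I take determinants. For any $2\times2$ matrix $M$ and scalar $c$ we have $\det(cM)=c^2\det M$, and in particular $\det(-M)=\det M$. Hence
\[
\Delta=\det[A,B]=p^2q^2\det[X,Y].
\]
Also $\Delta=\det[A,C]=p^2r^2\det[X,Z]$, and $\Delta=\det(-[B,C])=\det[B,C]=q^2r^2\det[Y,Z]$. This gives the three identities~\eqref{eq:three-commutator-identities}.

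For the divisibility relations, note that $X,Y,Z$ are integral matrices, so the commutators $[X,Y]$, $[X,Z]$, $[Y,Z]$ are integral and their determinants are integers. Each identity therefore exhibits $\Delta$ as $p^2q^2$, $p^2r^2$, or $q^2r^2$ times an integer, which yields~\eqref{eq:three-divisibilities}. The hypothesis $pqr\ne0$ only serves to make the divisibility statements meaningful (nonzero divisors); the identities themselves hold regardless.

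I expect no real obstacle. The only point needing care is the sign in $[B,C]=-[A,B]$, which disappears because the determinant is an even function in dimension~$2$.
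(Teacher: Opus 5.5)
Your proposal is correct and follows the paper's proof essentially line for line. You expand $A,B,C$ via Cayley--Hamilton so the scalar parts drop out of the commutators, use $C=A+B$ to get $[A,C]=[A,B]$ and $[B,C]=-[A,B]$, take determinants using $\det(cM)=c^2\det M$ for $2\times2$ matrices, and read off the divisibilities from the integrality of $\det[X,Y]$, $\det[X,Z]$, and $\det[Y,Z]$.
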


\begin{proof}
By the Cayley--Hamilton expansion,
$[A,B]=pq[X,Y]$, $[A,C]=pr[X,Z]$, and $[B,C]=qr[Y,Z]$.
On the other hand, $C=A+B$ gives
$[A,C]=[A,A+B]=[A,B]$ and
$[B,C]=[B,A+B]=[B,A]=-[A,B]$. Since the matrices are $2\times2$,
$\det(-[A,B])=\det[A,B]=\Delta$. Taking determinants yields
\eqref{eq:three-commutator-identities}. If $pqr\ne0$, the three
determinants on the right are integers, and hence
\eqref{eq:three-divisibilities}.
\end{proof}

We also need the following determinant identity for commutators of
$2\times2$ matrices.

\begin{lemma}\label{lem:comm-det-formula}
For all $M,N\in M_2(\mathbb Z)$, one has
\begin{align}\label{eq:comm-det-general}
\det(MN-NM)
={}&4\det M\det N
-\det M\,(\tr N)^2
-\det N\,(\tr M)^2\notag\\
&-(\tr(MN))^2
+\tr M\tr N\tr(MN).
\end{align}
\end{lemma}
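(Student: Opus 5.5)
The plan is to reduce to trace-zero matrices, using the fact that $MN-NM$ is unchanged when $M$ and $N$ are shifted by scalar matrices. Both sides are polynomial identities in the entries of $M,N$, so it suffices to prove the formula over $\mathbb Q$ (indeed over any commutative ring, as a polynomial identity). Write $M=M_0+\tfrac{\tr M}{2}\Id$ and $N=N_0+\tfrac{\tr N}{2}\Id$ with $\tr M_0=\tr N_0=0$. Then $[M,N]=[M_0,N_0]$, so the left side depends only on $M_0,N_0$. I will also show the right side depends only on $M_0,N_0$, and then check the identity in the trace-zero case.

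For the trace-zero case, use the Cayley--Hamilton relations $M_0^2=-\det M_0\,\Id$, $N_0^2=-\det N_0\,\Id$, and the polarized relation
\[
M_0N_0+N_0M_0=\tr(M_0N_0)\,\Id .
\]
Then
\[
[M_0,N_0]^2=(M_0N_0)^2+(N_0M_0)^2-M_0N_0^2M_0-N_0M_0^2N_0 .
\]
Writing $N_0M_0=\tau\Id-M_0N_0$ with $\tau=\tr(M_0N_0)$, this simplifies to
\[
[M_0,N_0]^2=\bigl(\tau^2-4\det M_0\det N_0\bigr)\Id .
\]
Since $[M_0,N_0]$ has trace zero, its square equals $-\det[M_0,N_0]\,\Id$. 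Hence
\[
\det[M_0,N_0]=4\det M_0\det N_0-\tr(M_0N_0)^2 .
\]
A direct entrywise check on $M_0=\smat{a&b\\c&-a}$, $N_0=\smat{e&f\\g&-e}$ is an equally fast alternative.

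The remaining step is the substitution, which I expect to be the only real bookkeeping. Put $s=\tr M$, $u=\tr N$, $\alpha=\det M$, $\beta=\det N$, and $w=\tr(MN)$. Then
\[
\det M_0=\alpha-\tfrac{s^2}{4},\qquad \det N_0=\beta-\tfrac{u^2}{4},\qquad \tr(M_0N_0)=w-\tfrac{su}{2},
\]
the last because $\tr(M_0)=\tr(N_0)=0$. Expanding gives
\[
4\det M_0\det N_0=4\alpha\beta-\alpha u^2-\beta s^2+\tfrac{s^2u^2}{4},
\qquad
\tr(M_0N_0)^2=w^2-suw+\tfrac{s^2u^2}{4}.
\]
Subtracting, the $s^2u^2/4$ terms cancel, leaving
\[
4\alpha\beta-\alpha u^2-\beta s^2-w^2+suw,
\]
which is exactly \eqref{eq:comm-det-general}. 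The identity is polynomial with integer coefficients, so it holds on $M_2(\mathbb Z)$ even though $\tfrac12$ was used in the intermediate steps over $\mathbb Q$.
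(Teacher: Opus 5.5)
Your proposal is correct and follows the paper's proof essentially step for step. You pass to the trace-zero parts $M_0,N_0$ and use Cayley--Hamilton together with the polarized relation $M_0N_0+N_0M_0=\tr(M_0N_0)\Id$ to obtain $\det[M_0,N_0]=4\det M_0\det N_0-\tr(M_0N_0)^2$, then substitute back. Your remark that the identity is polynomial with integer coefficients, so dividing by $2$ over $\mathbb Q$ is harmless, is a small clarification the paper leaves implicit.
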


\begin{proof}
Put $M_0=M-(\tr M)\Id/2$ and $N_0=N-(\tr N)\Id/2$. Then $[M,N]=[M_0,N_0]$, and both $M_0,N_0$ have trace zero.
For any trace-zero $2\times2$ matrices $S,T$, the Cayley--Hamilton
theorem gives
\[
S^2=-\det(S)\Id,
\qquad
T^2=-\det(T)\Id,
\qquad
ST+TS=\tr(ST)\Id.
\]
Therefore
\[
(ST-TS)^2
=\bigl(\tr(ST)^2-4\det(S)\det(T)\bigr)\Id.
\]
Since $ST-TS$ also has trace zero,
\[
\det(ST-TS)=4\det(S)\det(T)-\tr(ST)^2.
\]
Finally, substitute
\[
\begin{aligned}
\det(M_0)&=\det M-\frac{(\tr M)^2}{4},
&\det(N_0)&=\det N-\frac{(\tr N)^2}{4},\\
\tr(M_0N_0)&=\tr(MN)-\frac{\tr M\tr N}{2}.
\end{aligned}
\]
Expanding gives~\eqref{eq:comm-det-general}.
\end{proof}

\subsection{Trace estimates, centralizers, and root constraints}

Unless otherwise stated, throughout this subsection $n\ge5$ is odd. We
continue to write $A=X^n$, $B=Y^n$, $C=Z^n$, and
$a=\tr A$, $b=\tr B$, $c=\tr C=a+b$.

\begin{lemma}\label{lem:three-trace-bound}
Let $a,b,c\in\mathbb R$ with $c=a+b$. Choose from $a,b,c$ the two
numbers of largest absolute value, denote them by $u,v$, and assume
$|u|\ge|v|$. If $(a,b,c)\ne(0,0,0)$, then
$0<|v|\le|u|\le2|v|$. Moreover, each of the following six quadratic
forms, as well as its negative, satisfies
$|F(a,b)|\le3|uv|$:
\[
\begin{gathered}
a^2+ab+b^2,\quad a^2+3ab+b^2,\quad a^2+ab-b^2,\\
a^2-ab-b^2,\quad -a^2+ab+b^2,\quad -a^2-ab+b^2.
\end{gathered}
\]
\end{lemma}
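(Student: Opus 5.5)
The plan is to exploit the symmetry of the relation $c=a+b$, or equivalently $a+b+(-c)=0$. This triple relation is invariant under permuting $a,b,-c$ and under an overall sign change. Permuting the three numbers $a,b,-c$ and changing all signs replaces the pair $(a,b)$ by another pair whose sum is the negative of the third entry. The set of absolute values $\{|a|,|b|,|c|\}$ is unchanged, so $u$ and $v$ are unchanged up to sign.

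\textbf{Step 1: the first claim.} Write the three numbers as $x_1=a$, $x_2=b$, $x_3=-c$, so that $x_1+x_2+x_3=0$. Order them so that $|x_{(1)}|\ge|x_{(2)}|\ge|x_{(3)}|$; then $|u|=|x_{(1)}|$ and $|v|=|x_{(2)}|$. The triangle inequality gives $|u|\le|v|+|x_{(3)}|\le 2|v|$. If $v=0$, then $x_{(3)}=0$ as well, hence $u=0$, which contradicts $(a,b,c)\ne(0,0,0)$. So $0<|v|\le|u|\le 2|v|$.

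\textbf{Step 2: reduction for the quadratic forms.} Since $|F|=|-F|$, it suffices to treat the six forms themselves. Next we express each form in terms of the symmetric setup. The form $a^2+ab+b^2=\tfrac12(x_1^2+x_2^2+x_3^2)$ is fully symmetric. The form $a^2+3ab+b^2=c^2+ab$. The form $a^2+ab-b^2=a c-b^2$, and the remaining three forms are obtained from this one by the substitutions $a\leftrightarrow b$ and sign changes. Rather than chase the symmetries exactly, the cleanest route is a direct bound: each form has the shape $\alpha a^2+\beta ab+\gamma b^2$, and we estimate it on the three cases according to which of $x_1,x_2,x_3$ is the smallest in absolute value.

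\textbf{Step 3: the case analysis.} In each case the two largest entries are $u$ and $v$, and the smallest entry equals $\pm(u+v)$ or $\pm(u-v)$ with absolute value at most $|v|$.

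\emph{Case $|c|$ smallest.} Here $\{u,v\}=\{a,b\}$, and $a,b$ have opposite signs because $|a+b|\le\min(|a|,|b|)$. Each form is then bounded by $a^2+b^2+3|ab|$-type expressions. Rather than using this crude bound, I would write $a^2+b^2=(a+b)^2-2ab=c^2+2|ab|$ and $c^2\le|ab|$, so $a^2+b^2\le 3|ab|$. Combined with the sign of $ab$, this gives the bounds. For example, $a^2+ab+b^2=c^2-ab\ldots$; concretely,
$$a^2+ab+b^2=c^2+|ab|\cdot(-1)+\ldots,$$
and the check is routine.

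\emph{Cases $|a|$ or $|b|$ smallest.} Here $\{u,v\}=\{b,c\}$ or $\{a,c\}$. We substitute $a=c-b$ (respectively $b=c-a$) and rewrite each form as a quadratic form in the two largest entries. We then use the fact that the smallest entry has absolute value at most $|v|\le|u|$, together with $|u|\le 2|v|$.

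\textbf{Main obstacle.} I expect the main difficulty to be the sharp constant $3$, in particular for $a^2+3ab+b^2$ and the mixed forms, in the cases where $u$ and $v$ have the same sign. For example, take $a=b$, so that $u=c=2a$ and $v=a$. Then $a^2+3ab+b^2=5a^2$, while $3|uv|=6a^2$, so there is little slack. My first attempt would be to parametrize the two largest entries as $u$ and $v=\theta u$ with $\tfrac12\le|\theta|\le1$. Each form then becomes $u^2\,g(\theta)$ with $g$ a quadratic polynomial, and we need $|g(\theta)|\le3|\theta|$ on $[\tfrac12,1]\cup[-1,-\tfrac12]$. This is a one-variable check at the endpoints and the vertex of $g$, done for each form in each of the three cases.
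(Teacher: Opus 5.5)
Your Step~1 is correct and matches the paper's argument. The gap is in the bound $|F(a,b)|\le3|uv|$, which is the real content of the lemma, and your proposal never proves it. In the case where $|c|$ is smallest you only assert that ``the check is routine.'' The second version of the formula you display there also has the wrong sign: in that case $ab\le0$, so $a^2+ab+b^2=c^2-ab=c^2+|ab|$, not $c^2-|ab|$. The cases where $|a|$ or $|b|$ is smallest are not carried out at all.

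Your fallback parametrization is also set up incorrectly. The sign of $\theta=v/u$ is forced by the case: $\theta\in[-1,-\tfrac12]$ when $|c|$ is smallest, and $\theta\in[\tfrac12,1]$ when $|a|$ or $|b|$ is smallest. On the other half of $[\tfrac12,1]\cup[-1,-\tfrac12]$ the required inequality is false. For instance, take $|c|$ smallest with $u=a$, $v=b$. The form $a^2+3ab+b^2$ becomes $u^2(1+3\theta+\theta^2)$, and at $\theta=1$ this is $5u^2>3u^2=3|uv|$. So the check as you stated it would fail. The scheme is salvageable only by tracking the sign constraint through every combination of case, ordering of $u,v$, and form, which you have not done.

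The missing idea is that no case analysis is needed, and you were one line away from it. You already wrote $a^2+3ab+b^2=c^2+ab$ and $a^2+ab-b^2=ac-b^2$. Likewise,
\[
a^2+ab+b^2=ac+b^2,\quad a^2-ab-b^2=a^2-bc,\quad -a^2+ab+b^2=bc-a^2,\quad -a^2-ab+b^2=b^2-ac.
\]
So each of the six forms, and its negative, has the shape $\pm s^2\pm st$ with $s\ne t$ in $\{a,b,c\}$. Two facts then finish it:
\begin{itemize}
\item any two distinct entries satisfy $|st|\le|uv|$, since $|uv|$ is the product of the two largest absolute values;
\item every entry satisfies $s^2\le u^2\le2|uv|$, by your Step~1.
\end{itemize}
Hence $|F|\le2|uv|+|uv|=3|uv|$ uniformly, which is exactly the paper's proof.
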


\begin{proof}
Since $c=a+b$, the largest of $|a|,|b|,|c|$ is at most the sum
of the other two, so $|u|\le2|v|$. Any two distinct elements
$s,t\in\{a,b,c\}$ satisfy $|st|\le|uv|$, and every
$s\in\{a,b,c\}$ satisfies $s^2\le u^2\le2|uv|$. Using also
\[
\begin{gathered}
a^2+3ab+b^2=c^2+ab,\qquad a^2+ab-b^2=ac-b^2,\\
a^2-ab-b^2=a^2-bc,\qquad -a^2+ab+b^2=-a^2+bc,\\
-a^2-ab+b^2=-ac+b^2,\qquad a^2+ab+b^2=ac+b^2,
\end{gathered}
\]
the assertion follows by applying the triangle inequality to each form.
\end{proof}

\begin{lemma}\label{lem:growth-minus}
Let $n\ge5$ be odd, and let $M\in\GL_2(\mathbb Z)$ satisfy
$\det M=-1$ and $\tr M\ne0$. Put
$d=\tr(M^n)$ and $u=U_n(\tr M,-1)$. Then
$|d|\ge11, \; u^2\ge2|d|+3$.
\end{lemma}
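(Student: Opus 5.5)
We normalize the sign and then reduce everything to Lucas/Fibonacci-type sequences. Put \(t=\tr M\). Since \(n\) is odd, replacing \(M\) by \(-M\) changes \(d\) into \(-d\) and leaves \(u\) unchanged. Indeed, \(U_n(-t,-1)=(-1)^{n-1}U_n(t,-1)\), since \(U_k(t,-1)\) has the parity of \(k-1\) in \(t\). So we may assume \(t\ge1\).

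With \(\varepsilon=-1\) the recurrence \eqref{eq:U-recurrence} reads \(U_{k+1}=tU_k+U_{k-1}\). Define the companion sequence \(V_0=2\), \(V_1=t\), \(V_{k+1}=tV_k+V_{k-1}\). From \eqref{eq:CH-power} one gets \(\tr(M^k)=V_k\), for instance via \(\tr(M^k)=tU_k+2U_{k-1}\). Both \(U_k\) and \(V_k\) are polynomials in \(t\) with nonnegative integer coefficients, and for \(t\ge1\) they are nondecreasing in \(k\ge1\). Hence \(d=V_n(t)\ge V_5(1)=11\), the fifth Lucas number. This gives \(|d|\ge11\). Similarly \(u=U_n(t)\ge U_5(t)=t^4+3t^2+1\).

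For the second inequality we split into two cases.

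\emph{Case \(t=1\).} Apply the identity \eqref{eq:trace-discriminant} with \(\varepsilon^n=-1\): it gives \(d^2+4=(t^2+4)u^2=5u^2\). The claim \(u^2\ge 2d+3\) becomes \(d^2+4\ge 10d+15\), that is, \((d-11)(d+1)\ge0\). This holds because \(d\ge11\).

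\emph{Case \(t\ge2\).} Use \(d=tU_n+2U_{n-1}\le(t+2)u\), since \(0\le U_{n-1}\le U_n\). It then suffices to have \(u^2\ge2(t+2)u+3\). This follows from \(u\ge 2t+5\), because then \(u^2\ge(2t+5)u=2(t+2)u+u>2(t+2)u+3\). Finally \(u\ge t^4+3t^2+1\ge 2t+5\) for \(t\ge2\) (at \(t=2\) the left side is \(29\)).

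The only delicate point is that the crude bound \(d\le(t+2)u\) is too weak at \(t=1\), \(n=5\), where \(u=5\) and \(d=11\). The exact discriminant identity of Lemma~\ref{lem:CH-expansion} handles this case, and there the bound is tight: \(25=2\cdot11+3\).
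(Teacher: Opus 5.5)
Your proof is correct, and it takes a genuinely different route from the paper's.

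The paper works with the real eigenvalues \(\lambda,-\lambda^{-1}\). It writes \(|d|=\lambda^n-\lambda^{-n}\) and \(|u|=(\lambda^n+\lambda^{-n})/(\lambda+\lambda^{-1})\). It then proves the second inequality by showing that \(H_k(s)=U_k(s,-1)^2-2\tr(M_s^k)-3\) increases over odd \(k\ge5\). That step needs an explicit closed form for \(H_{k+2}(s)-H_k(s)\), which must be checked by expansion, together with the factorization \(H_5(s)=(s-1)(\cdots)\).

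You instead stay entirely with integer sequences:
\begin{itemize}
\item Nonnegative coefficients give monotonicity in both \(k\) and \(t\).
\item The crude bound \(d=tU_n+2U_{n-1}\le(t+2)u\) disposes of \(t\ge2\) with room to spare.
\item The one tight case \(t=1\), the Fibonacci/Lucas case, is settled exactly by the discriminant identity \(d^2+4=5u^2\). This turns the claim into \((d-11)(d+1)\ge0\).
\end{itemize}

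What each approach buys: yours is shorter and avoids both the expansion-verified algebraic identity and the degree-eight polynomial check. It also makes clear why the bound is sharp exactly at \(t=1\), \(n=5\). The paper's argument is uniform in \(s\) and yields the stronger fact that \(H_k(s)\) is increasing in \(k\). It also runs in parallel with the paper's treatment of the determinant-one case in Lemma~\ref{lem:growth-plus}.
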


\begin{proof}
Let $s=|\tr M|\ge1$ and
$\lambda=(s+\sqrt{s^2+4})/2$. Then $\lambda>1$, and the other
eigenvalue is $-\lambda^{-1}$. Since $n$ is odd, $|d|=\lambda^n-\lambda^{-n}$ and
$|u|=(\lambda^n+\lambda^{-n})/(\lambda+\lambda^{-1})$.
For fixed $s$, both quantities increase with odd $n$. Hence
$|d|\ge\lambda^5-\lambda^{-5}=s^5+5s^3+5s\ge11$.

We now prove the second inequality. Define
\[
M_s=
\begin{pmatrix}
s&1\\
1&0
\end{pmatrix}.
\]
Then $\tr M_s=s$ and $\det M_s=-1$, and its eigenvalues are
$\lambda$ and $-\lambda^{-1}$. In particular, for odd $k$, $\tr(M_s^k)=\lambda^k-\lambda^{-k}$.
On the other hand, the recurrence~\eqref{eq:U-recurrence} gives
$U_k(-s,-1)=(-1)^{k-1}U_k(s,-1)$.
Thus, for odd $k$, the trace of the $k$th power of a
negative-determinant matrix with trace $-s$ is the negative of
$\tr(M_s^k)$, while the corresponding coefficient $U_k$ is unchanged.
It therefore suffices, with $s=|\tr M|$, to consider
$H_k(s)=U_k(s,-1)^2-2\tr(M_s^k)-3$. Using
$\lambda-\lambda^{-1}=s$ and
$U_k(s,-1)=(\lambda^k+\lambda^{-k})/(\lambda+\lambda^{-1})$,
and simplifying $H_{k+2}(s)-H_k(s)$ over a common denominator, we obtain
the following identity.  For a direct verification, multiply both sides by
$\lambda^{2k+2}(\lambda^2+1)$ and repeatedly replace
$\lambda^2$ by $s\lambda+1$; after expansion the two sides reduce to the
same polynomial in $s$ and $\lambda$:
\begin{equation}\label{eq:growth-minus-difference}
H_{k+2}(s)-H_k(s)
=
\frac{(\lambda-1)(\lambda+1)(\lambda^{2k+2}+1)
\bigl(\lambda^{2k+2}-2\lambda^{k+2}-2\lambda^k-1\bigr)}
{\lambda^{2k+2}(\lambda^2+1)}.
\end{equation}
For $s\ge1$, we have $\lambda\ge\varphi=(1+\sqrt5)/2$. Since
$k\ge5$,
\[
\begin{aligned}
&\lambda^{2k+2}-2\lambda^{k+2}-2\lambda^k-1\\
&\qquad=\lambda^k\bigl(\lambda^{k+2}-2(\lambda^2+1)\bigr)-1\\
&\qquad\ge \lambda^k\bigl(\lambda^7-2\lambda^2-2\bigr)-1.
\end{aligned}
\]
The function $f(\lambda)=\lambda^7-2\lambda^2-2$ is strictly
increasing for $\lambda\ge\varphi$, and $f(\varphi)>1$; also
$\lambda^k>1$. Thus the displayed expression is strictly positive, so
$H_{k+2}(s)-H_k(s)>0$. Moreover,
$H_5(s)=(s-1)(s^7+s^6+7s^5+5s^4+16s^3+6s^2+12s+2)\ge0$.
Hence $H_n(s)\ge0$, that is, $u^2\ge2|d|+3$.
\end{proof}

\begin{lemma}\label{lem:growth-plus}
Let $n\ge5$ be odd and $M\in\SL_2(\mathbb Z)$. Put
$d=\tr(M^n)$ and $u=U_n(\tr M,1)$. Then
$u=0$ implies $M^n=\pm\Id$. If $u\ne0$, then either
$\tr M\in\{0,\pm1\}$, $|u|=1$, and $|d|\le1$, or
$u^2\ge2|d|+3$.
\end{lemma}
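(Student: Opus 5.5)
We split according to $t=\tr M$, with $\det M=1$. The sequence $U_k(t,1)$ is the Chebyshev-type sequence of the second kind. If $|t|\ge2$ the eigenvalues are real with product $1$. If $|t|\le1$ or $t=0$ they are roots of unity.

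\emph{Elliptic cases $t\in\{0,\pm1\}$.} Here $M$ has finite order $4$, $6$ or $3$ by Lemma~\ref{lem:finite-orders}. The sequence $U_k(t,1)$ is periodic with values in $\{0,\pm1\}$:
\begin{itemize}
\item for $t=0$ it is $0,1,0,-1,\dots$;
\item for $t=1$ it is $0,1,1,0,-1,-1,\dots$;
\item for $t=-1$ it is $0,1,-1,0,1,-1,\dots$.
\end{itemize}
So either $u=0$ or $|u|=1$. If $u=0$, then \eqref{eq:CH-power} gives $M^n=-U_{n-1}\Id$, and $M^n$ has determinant $1$, so $M^n=\pm\Id$. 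If $|u|=1$, then $M^n$ is again torsion in $\SL_2(\mathbb Z)$ with $u\neq0$, hence nonscalar. Its trace lies in $\{0,\pm1\}$, so $|d|\le1$. Alternatively one reads $d=\lambda^n+\bar\lambda^n$ directly for $\lambda$ a root of unity of order $3$, $4$ or $6$.

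\emph{Parabolic cases $t=\pm2$.} Then $M=\pm(\Id+N)$ with $N$ nilpotent. We get $U_n(\pm2,1)=(\pm1)^{n-1}n=n$ and $d=\pm2$. If $u=0$ we would need $n=0$, which is impossible. So for $n\ge5$ we have $u^2=n^2\ge25\ge2\cdot2+3$.

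\emph{Hyperbolic cases $|t|\ge3$.} Write $s=|t|$ and $\lambda=(s+\sqrt{s^2-4})/2>1$, so the other eigenvalue is $\lambda^{-1}$. As in Lemma~\ref{lem:growth-minus}, the relation $U_k(-s,1)=(-1)^{k-1}U_k(s,1)$ together with the oddness of $n$ reduces everything to $t=s>0$. Then
\[
|d|=\lambda^n+\lambda^{-n},\qquad |u|=\frac{\lambda^n-\lambda^{-n}}{\lambda-\lambda^{-1}}.
\]
In this case $u\neq0$ automatically. We want $u^2\ge 2|d|+3$. Since $\lambda-\lambda^{-1}=\sqrt{s^2-4}$, we have
\[
u^2=\frac{d^2-4}{s^2-4},
\]
which is just \eqref{eq:trace-discriminant}. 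So the inequality becomes $d^2-4\ge(s^2-4)(2|d|+3)$.

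Put $D=|d|$. The quadratic $D^2-2(s^2-4)D-(3s^2-8)$ is nonnegative once $D\ge 2(s^2-4)+\sqrt{\cdots}$; it suffices to have $D\ge 2s^2$. For $n\ge5$ we have $D\ge\lambda^5\ge (s-1)^5$, and $(s-1)^5\ge 2s^2$ for $s\ge3$ since $32\ge18$. Alternatively one can use $d=T_n$, the Chebyshev polynomial of the first kind, to get $D\ge s^5-5s^3+5s$, which is at least $2s^2$ for $s\ge3$ ($153\ge18$, and the expression is increasing).

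\emph{Main obstacle.} The only delicate point is keeping the sign bookkeeping straight for negative trace. The elliptic enumeration also needs care: the claim $|d|\le1$ should be checked by listing $d$ over residues of $n$ modulo the order of $M$, taking $n$ odd. The hyperbolic estimate itself is routine.
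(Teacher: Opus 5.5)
Your proof is correct. In the elliptic and parabolic cases it follows the paper's proof essentially step for step. The paper tabulates $(u,d)$ by $n \bmod 6$; you instead get $|d|\le1$ from the fact that $M^n=uM-U_{n-1}(t,1)\Id$ with $u\ne0$ is a nonscalar torsion element of $\SL_2(\mathbb Z)$. Either works.

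The hyperbolic case $|t|\ge3$ is where you genuinely diverge.

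\begin{itemize}
\item The paper introduces $K_k(s)=U_k(s,1)^2-2\tr(N_s^k)-3$. It proves $K_{k+2}(s)-K_k(s)>0$ through the factorization \eqref{eq:growth-plus-difference}, whose verification is left as a polynomial expansion. It then checks the base case $K_5(s)>0$ by a derivative argument.
\item You instead substitute the trace-discriminant identity \eqref{eq:trace-discriminant} from Lemma~\ref{lem:CH-expansion}, namely $u^2=(d^2-4)/(s^2-4)$. This turns the target into the quadratic condition $D^2-2(s^2-4)D-(3s^2-8)\ge0$ for $D=|d|$. That condition follows from the crude bound $D>\lambda^n\ge\lambda^5\ge(s-1)^5\ge2s^2$.
\end{itemize}

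Your route needs no induction on $k$ and no auxiliary identity. It does not even need the sign reduction $t\mapsto-t$, since only $t^2$ and $d^2$ enter. The paper's route gives the extra, unneeded information that $K_k(s)$ increases with $k$.

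There are two small slips, neither affecting validity.

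\begin{itemize}
\item The sentence linking the threshold ``$2(s^2-4)+\sqrt{\cdots}$'' to the sufficiency of $D\ge2s^2$ is vague. The actual larger root is $(s^2-4)+\sqrt{s^4-5s^2+8}$. It is cleaner to verify directly that $D\ge2s^2$ gives $D(D-2s^2+8)\ge16s^2>3s^2-8$.
\item The value of $s^5-5s^3+5s$ at $s=3$ is $123$, not $153$; this is harmless, since $123\ge18$ still holds.
\end{itemize}
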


\begin{proof}
Let $t=\tr M$. For $t=0,\pm1$, the recurrence gives the following
periodic patterns. If $t=0$, then
$u=(-1)^{(n-1)/2}$ and $d=0$. If $t=1$, then
\[
(u,d)=
\begin{cases}
(1,1),&n\equiv1\pmod6,\\
(0,-2),&n\equiv3\pmod6,\\
(-1,1),&n\equiv5\pmod6;
\end{cases}
\]
If $t=-1$, then
\[
(u,d)=
\begin{cases}
(1,-1),&n\equiv1\pmod6,\\
(0,2),&n\equiv3\pmod6,\\
(-1,-1),&n\equiv5\pmod6.
\end{cases}
\]
Consequently, in these three cases $t\in\{0,\pm1\}$, if $u=0$,
the Cayley--Hamilton expansion gives
$M^n=-U_{n-1}(t,1)\Id$. Since $M^n\in\SL_2(\mathbb Z)$,
$U_{n-1}(t,1)^2=\det(M^n)=1$,
and hence $M^n=\pm\Id$. If $u\ne0$, then $|u|=1$ and
$|d|\le1$. For $t=\pm2$, we have $|u|=n$ and $|d|=2$, so
$u\ne0$ and $u^2\ge25>7$. If $|t|\ge3$, the eigenvalue formula below
shows that $u\ne0$.

Now suppose $|t|=s\ge3$ and put
$\lambda=(s+\sqrt{s^2-4})/2>1$. Then
$|d|=\lambda^n+\lambda^{-n}$ and
$|u|=(\lambda^n-\lambda^{-n})/(\lambda-\lambda^{-1})$.
Define
\[
N_s=
\begin{pmatrix}
s&-1\\
1&0
\end{pmatrix}.
\]
Then $\tr N_s=s$, $\det N_s=1$, and the eigenvalues of $N_s$ are
$\lambda$ and $\lambda^{-1}$. Therefore $\tr(N_s^k)=\lambda^k+\lambda^{-k}$. At the same time,
the recurrence~\eqref{eq:U-recurrence} gives
$U_k(-s,1)=(-1)^{k-1}U_k(s,1)$.
For odd $k$, the trace of the $k$th power of a
positive-determinant matrix with trace $-s$ is the negative of
$\tr(N_s^k)$, while the corresponding $U_k$ coefficient is unchanged.
It is therefore enough to consider
$K_k(s)=U_k(s,1)^2-2\tr(N_s^k)-3$. Using
$\lambda+\lambda^{-1}=s$ and
$U_k(s,1)=(\lambda^k-\lambda^{-k})/(\lambda-\lambda^{-1})$,
and simplifying $K_{k+2}(s)-K_k(s)$ over a common denominator, we get
the following identity.  For a direct verification, multiply both sides by
$\lambda^{2k+2}(\lambda-1)(\lambda+1)$ and repeatedly replace
$\lambda^2$ by $s\lambda-1$; after expansion the two sides reduce to the
same polynomial in $s$ and $\lambda$:
\begin{equation}\label{eq:growth-plus-difference}
K_{k+2}(s)-K_k(s)
=
\frac{(\lambda^{k+1}-1)(\lambda^{k+1}+1)\,\Xi_k(\lambda)}
{\lambda^{2k+2}(\lambda-1)(\lambda+1)}.
\end{equation}
Here
\[
\Xi_k(\lambda)
=
\lambda^{2k+4}+\lambda^{2k+2}
-2\lambda^{k+4}+4\lambda^{k+2}-2\lambda^k+\lambda^2+1.
\]
Put $T=\lambda^k$. Then
$\Xi_k(\lambda)=T\bigl(\lambda^2(\lambda^2+1)T-2(\lambda^2-1)^2\bigr)
+\lambda^2+1$.
Since $\lambda\ge(3+\sqrt5)/2>2$ and $k\ge5$, we have
$T\ge\lambda^5$, and therefore
$\lambda^2(\lambda^2+1)T\ge\lambda^7(\lambda^2+1)>2\lambda^4
>2(\lambda^2-1)^2$. Thus the expression in parentheses is strictly
positive, and hence $\Xi_k(\lambda)>0$. Furthermore,
$K_5(s)=s^8-6s^6-2s^5+11s^4+10s^3-6s^2-10s-2$. Its derivative is
$2(s^2-s-1)(s^2+s-1)(4s^3-6s-5)>0$ for $s\ge3$, and
$K_5(3)=2776>0$. Hence $K_n(s)>0$.
\end{proof}

\begin{lemma}\label{lem:common-eigenvector}
Let $S,T\in M_2(\overline{\mathbb Q})$. Then $\det[S,T]=0$ if and only
if $S$ and $T$ have a common eigenvector.
\end{lemma}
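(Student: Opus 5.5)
The natural route is to reduce to trace-zero parts and then exploit the Cayley--Hamilton identity for those parts. Over \(\overline{\mathbb Q}\), put \(S_0=S-(\tr S)\Id/2\) and \(T_0=T-(\tr T)\Id/2\). Then \([S,T]=[S_0,T_0]\), and \(S,T\) have exactly the same eigenvectors as \(S_0,T_0\). The proof of Lemma~\ref{lem:comm-det-formula} is purely algebraic, so it holds over \(\overline{\mathbb Q}\) and gives
\[
\det[S_0,T_0]=4\det S_0\det T_0-\tr(S_0T_0)^2 .
\]
We may therefore assume throughout that \(\tr S=\tr T=0\).

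\emph{The easy direction.} Suppose \(v\neq0\) satisfies \(Sv=\alpha v\) and \(Tv=\beta v\). Then \([S,T]v=\alpha\beta v-\beta\alpha v=0\). So \([S,T]\) is singular, and \(\det[S,T]=0\).

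\emph{The converse.} Assume \(\det[S,T]=0\), with \(S,T\) trace zero.
\begin{enumerate}
\item If \(S\) or \(T\) is scalar, it is \(0\). Any eigenvector of the other matrix is then common; one exists because \(\overline{\mathbb Q}\) is algebraically closed.
\item If \([S,T]=0\) and neither is scalar, then \(T\) lies in the centralizer of \(S\), which is \(\overline{\mathbb Q}[S]\). Any eigenvector of \(S\) is then an eigenvector of \(T\).
\item Otherwise \(K:=[S,T]\) is nonzero, trace zero and singular. By Cayley--Hamilton \(K^2=0\), so \(K\) has rank one. Let \(\ker K=\langle w\rangle\). The aim is to show \(w\) is a common eigenvector.
\end{enumerate}

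For the last case, use \(SK+KS=\tr(SK)\Id\), valid because \(S\) and \(K\) are trace zero. Compute
\[
\tr(SK)=\tr(S^2T)-\tr(STS)=0,
\]
since \(S^2=-\det(S)\Id\) and \(\tr T=0\). Hence \(SK=-KS\), and so \(K(Sw)=-SKw=0\). Thus \(Sw\in\ker K=\langle w\rangle\), i.e. \(w\) is an eigenvector of \(S\). The same argument with \(T\) in place of \(S\) shows \(w\) is an eigenvector of \(T\). This anticommutation step is the only point that needs care; the rest is routine.

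An alternative I would keep as a check is to triangularize directly. If \(S\) is diagonalizable with distinct eigenvalues, take \(S=\operatorname{diag}(\alpha,-\alpha)\) with \(\alpha\neq0\). Then \(\det[S,T]=4\alpha^2\,t_{12}t_{21}\), which vanishes exactly when \(T\) is triangular in this basis, i.e. shares an eigenvector with \(S\). The case where \(S\) is a nonzero nilpotent is handled the same way.
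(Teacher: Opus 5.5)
Your proof is correct, and its main line differs from the paper's.

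The paper works in coordinates. It splits according to the Jordan type of \(S\): scalar, two distinct eigenvalues, or a single eigenvalue with \(S\) nonscalar. In an adapted basis it computes \(\det[S,T]=(\lambda_1-\lambda_2)^2qr\) or \(-r^2\), and reads off that \(T\) is triangular there. Its easy direction goes through simultaneous triangularization. Your closing \emph{alternative} paragraph is essentially that proof, and your easy direction via \([S,T]v=0\) is a slightly quicker version of the paper's.

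Your main converse is basis-free. After the trace-zero reduction, when \(K=[S,T]\neq0\) you note \(\tr(SK)=\tr(TK)=0\); cyclicity of the trace alone gives this. The polarized Cayley--Hamilton identity \(SK+KS=\tr(SK)\Id\) then yields \(SK=-KS\) and \(TK=-KT\), so both matrices preserve the line \(\ker K\).

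What this buys is an explicit common eigenline, \(\ker[S,T]\). It is automatically defined over \(\mathbb Q\) when \(S,T\) have rational entries and \([S,T]\) is nonzero and singular. For instance, in Lemma~\ref{lem:SL2-singular-commutator} this would make the Galois-conjugate-line case unnecessary, since a noncommuting pair with singular commutator already has a rational common eigenline. The paper's route, in exchange, is shorter and needs no separate commuting case, whereas you handle that case through the centralizer.

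One small point: the formula \(\det[S_0,T_0]=4\det S_0\det T_0-\tr(S_0T_0)^2\) that you quote is never used afterwards and can be dropped.
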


\begin{proof}
If $S$ and $T$ have a common eigenvector, they can be simultaneously
upper triangularized. Their commutator is then strictly upper triangular,
and its determinant is zero.

Conversely, the assertion is immediate if $S$ is scalar. If $S$ has
two distinct eigenvalues, write, in an eigenbasis for $S$,
\[
S=\begin{pmatrix}\lambda_1&0\\0&\lambda_2\end{pmatrix},
\qquad
T=\begin{pmatrix}p&q\\r&s\end{pmatrix}.
\]
Then $\det[S,T]=(\lambda_1-\lambda_2)^2qr$. Hence $q=0$ or
$r=0$, and the two matrices have a common eigenvector. If $S$ has a
single eigenvalue but is not scalar, write
\[
S=\lambda\Id+\begin{pmatrix}0&1\\0&0\end{pmatrix}.
\]
A direct calculation gives $\det[S,T]=-r^2$. Thus $r=0$, and again
the matrices have a common eigenvector.
\end{proof}

\begin{lemma}\label{lem:nonscalar-centralizer}
If $M\in M_2(\mathbb Q)$ is nonscalar, then
\[
C_{M_2(\mathbb Q)}(M)=\mathbb Q[M]
=\{\alpha\Id+\beta M:\alpha,\beta\in\mathbb Q\}.
\]
In particular, if a nonscalar matrix $N$ commutes with $M$, then
$C_{M_2(\mathbb Q)}(N)=C_{M_2(\mathbb Q)}(M)$.
\end{lemma}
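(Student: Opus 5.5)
The proof uses the cyclic vector criterion for nonscalar $2\times2$ matrices. Since $M$ is nonscalar, there is a vector $v\in\mathbb Q^2$ that is not an eigenvector of $M$. Then $v$ and $Mv$ are linearly independent and form a basis of $\mathbb Q^2$. Indeed, if no such $v$ existed, every nonzero vector would be an eigenvector, and $M$ would be scalar.

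For the inclusion $C_{M_2(\mathbb Q)}(M)\subseteq\mathbb Q[M]$, let $N$ commute with $M$. Write $Nv=\alpha v+\beta Mv$ with $\alpha,\beta\in\mathbb Q$, and put $N'=N-\alpha\Id-\beta M$. Then $N'$ commutes with $M$ and $N'v=0$. Consequently
\[
N'(Mv)=M(N'v)=0 .
\]
So $N'$ vanishes on the basis $\{v,Mv\}$, hence $N'=0$ and $N\in\mathbb Q[M]$.

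The reverse inclusion $\mathbb Q[M]\subseteq C_{M_2(\mathbb Q)}(M)$ is trivial. The equality with $\{\alpha\Id+\beta M\}$ follows from the Cayley--Hamilton theorem, since $M^2$ is a $\mathbb Q$-linear combination of $\Id$ and $M$.

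For the final assertion, let $N$ be nonscalar and commute with $M$. By the first part $N=\alpha\Id+\beta M$, and $\beta\neq0$ because $N$ is nonscalar. Hence $M=\beta^{-1}(N-\alpha\Id)\in\mathbb Q[N]$, so $\mathbb Q[M]=\mathbb Q[N]$. Applying the first part to both $M$ and $N$ gives
\[
C_{M_2(\mathbb Q)}(N)=\mathbb Q[N]=\mathbb Q[M]=C_{M_2(\mathbb Q)}(M).
\]

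There is no real obstacle here. The only point that needs care is justifying the existence of a non-eigenvector $v$ for a nonscalar $M$, which follows from the fact that a linear map with every nonzero vector as an eigenvector is scalar.
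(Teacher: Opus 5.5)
Your proof is correct and follows the paper's argument essentially step for step: you pick $v$ with $v,Mv$ a basis, show that any matrix commuting with $M$ agrees with $\alpha\Id+\beta M$ on that basis, and then deduce $\mathbb Q[N]=\mathbb Q[M]$. The only cosmetic difference is in the last step: you solve $N=\alpha\Id+\beta M$ for $M$ using $\beta\neq0$, whereas the paper observes that $\mathbb Q[N]\subseteq\mathbb Q[M]$ and that both algebras are two-dimensional.
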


\begin{proof}
Because $M$ is nonscalar, its minimal polynomial has degree $2$. There
is a vector $v\in\mathbb Q^2$ for which $v,Mv$ are linearly independent;
otherwise $Mv\in\mathbb Qv$ for every $v$, forcing $M$ to be scalar.

Take $T\in C_{M_2(\mathbb Q)}(M)$. Relative to the basis $v,Mv$,
write $Tv=\alpha v+\beta Mv$ with $\alpha,\beta\in\mathbb Q$. Since
$TM=MT$, we have
$T(Mv)=M(Tv)=\alpha Mv+\beta M^2v$. Thus $T$ and
$\alpha\Id+\beta M$ agree on the basis $v,Mv$, and hence
$T=\alpha\Id+\beta M$. The reverse inclusion is immediate, proving
$C_{M_2(\mathbb Q)}(M)=\mathbb Q[M]$.

If the nonscalar matrix $N$ commutes with $M$, then
$\mathbb Q[N]\subseteq C_{M_2(\mathbb Q)}(M)=\mathbb Q[M]$. Both sides
are two-dimensional $\mathbb Q$-algebras, so $\mathbb Q[N]=\mathbb Q[M]$.
Applying the first part again yields
$C_{M_2(\mathbb Q)}(N)=\mathbb Q[N]=\mathbb Q[M]
=C_{M_2(\mathbb Q)}(M)$.
\end{proof}

\begin{lemma}\label{lem:SL2-singular-commutator}
Let $A,B\in\SL_2(\mathbb Z)$. Then $\det[A,B]=0$ if and only if
$[A,B]=0$.
\end{lemma}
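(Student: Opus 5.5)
The nontrivial direction is that $\det[A,B]=0$ forces $[A,B]=0$; the converse is immediate since $\det 0=0$. I would argue by contradiction. Suppose $\det[A,B]=0$ but $[A,B]\ne0$. By Lemma~\ref{lem:common-eigenvector}, $A$ and $B$ have a common eigenvector $v\in\overline{\mathbb Q}^2$. We may assume neither matrix is scalar, since a scalar matrix commutes with everything.

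The plan is to show the common eigenvector can be taken rational and then put both matrices in upper triangular form over $\mathbb Z$.

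\emph{Case 1: $A$ has two distinct eigenvalues.} Its eigenlines are the only invariant lines. If these lines are irrational (irreducible characteristic polynomial), they are Galois conjugate. Since $B$ has rational entries, if $B$ fixes one eigenline it also fixes the conjugate line. Then $B$ is diagonal in $A$'s eigenbasis, so $B$ commutes with $A$, a contradiction.

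\emph{Case 2: $A$ has a repeated eigenvalue.} Then the characteristic polynomial is $(x\mp1)^2$, which has rational roots, so the unique eigenline of the nonscalar $A$ is rational.

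In either case we get a rational line, spanned by a primitive integer vector $v$, that is invariant under both $A$ and $B$. Extending $v$ to a $\mathbb Z$-basis gives $P\in\GL_2(\mathbb Z)$ with both $PAP^{-1}$ and $PBP^{-1}$ upper triangular integral matrices.

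Their diagonal entries are integers with product $1$, hence each is $\pm1$. So write
\[
PAP^{-1}=\begin{pmatrix}\alpha&x\\0&\alpha\end{pmatrix},\qquad
PBP^{-1}=\begin{pmatrix}\beta&y\\0&\beta\end{pmatrix},
\qquad \alpha,\beta\in\{\pm1\}.
\]
Both matrices equal a scalar plus a strictly upper triangular matrix. Any two such matrices commute, so $[A,B]=0$. This contradicts our assumption and finishes the proof.

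The main obstacle is the rationality step, namely excluding an irrational common eigenvector when $A$ has distinct eigenvalues. The Galois conjugation argument handles it. An alternative is that an irreducible $A$ has centralizer $\mathbb Q[A]$ by Lemma~\ref{lem:nonscalar-centralizer}, and a shared eigenvector over a field in which $A$ is irreducible forces $B$ to preserve both conjugate eigenlines.
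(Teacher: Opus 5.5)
Your proof is correct and follows essentially the same route as the paper: you obtain a common eigenline from Lemma~\ref{lem:common-eigenvector}, use Galois conjugation to simultaneously diagonalize when that line is irrational, and otherwise upper-triangularize over $\mathbb Z$, where the diagonal entries are $\pm1$ with product $1$ and hence equal. The only difference is cosmetic: you split cases by the eigenvalue structure of $A$ rather than by the field of definition of the line, and the unaddressed subcase of distinct rational eigenvalues (which cannot occur in $\SL_2(\mathbb Z)$ anyway) is absorbed by your rational-line argument.
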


\begin{proof}
Sufficiency is clear. For necessity, if $A$ or $B$ is scalar, the
conclusion is immediate; assume that both are nonscalar. By
Lemma~\ref{lem:common-eigenvector}, $A$ and $B$ have a common eigenline
$L$ in $\overline{\mathbb Q}^{\,2}$.

If $L$ is defined over $\mathbb Q$, choose a primitive integral vector
$v\in\mathbb Z^2$ spanning $L$ and extend it to a basis of
$\mathbb Z^2$. Relative to this integral basis, both $A$ and $B$ are
upper triangular integral matrices. Write
\[
A=\begin{pmatrix}\alpha&r\\0&\alpha'\end{pmatrix},
\qquad
B=\begin{pmatrix}\beta&s\\0&\beta'\end{pmatrix}.
\]
Since $\alpha,\alpha',\beta,\beta'\in\mathbb Z$ and
$\alpha\alpha'=\beta\beta'=1$, we have
$\alpha=\alpha'=\epsilon$ and $\beta=\beta'=\delta$ for some
$\epsilon,\delta\in\{\pm1\}$. Hence
\[
A=\begin{pmatrix}\epsilon&r\\0&\epsilon\end{pmatrix},
\qquad
B=\begin{pmatrix}\delta&s\\0&\delta\end{pmatrix}.
\]
Direct multiplication now gives $AB=BA$.

Suppose instead that $L$ is not defined over $\mathbb Q$. Since $A$ is
nonscalar, $L$ is an eigenspace of $A$. The characteristic polynomial of
$A$ has degree $2$, so the field of definition of $L$ has degree at most
$2$. Nonrationality implies that it is a quadratic field $K$. Let $L'$ be
the conjugate of $L$ under the nontrivial element of
$\operatorname{Gal}(K/\mathbb Q)$. Because the entries of $A$ and $B$ lie
in $\mathbb Q$, preservation of $L$ implies preservation of $L'$.
Moreover $L\ne L'$, so $L\oplus L'=K^2$, and $A,B$ are both diagonal in
the same basis with coordinate axes $L,L'$. Thus $AB=BA$.

In either case, $\det[A,B]=0$ implies $[A,B]=0$.
\end{proof}

\begin{lemma}\label{lem:golden-root}
Let $\varphi=(1+\sqrt5)/2$ and $K=\mathbb Q(\sqrt5)$. Let $n\ge3$ be
odd. If an algebraic number $\theta$ satisfies
$\theta^n\in\{\pm\varphi,\pm\varphi^{-1}\}$, then
$[\mathbb Q(\theta):\mathbb Q]\ge2n$. In particular, $\theta$ cannot
belong to the compositum of two quadratic fields.
\end{lemma}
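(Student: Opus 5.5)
Fix $\alpha\in\{\pm\varphi,\pm\varphi^{-1}\}$ with $\theta^n=\alpha$. Since $-1=(-1)^n$ for odd $n$, replacing $\theta$ by $-\theta$ lets us assume $\alpha\in\{\varphi,\varphi^{-1}\}$. Because $\varphi^{-1}=-\bar\varphi$, where $\bar\varphi=(1-\sqrt5)/2$ is the Galois conjugate, this case is Galois-conjugate to $\theta^n=\varphi$ up to a further sign. So it suffices to bound $[\mathbb Q(\theta):\mathbb Q]$ when $\theta^n=\varphi$. We have $\mathbb Q(\theta)\supseteq\mathbb Q(\varphi)=K$, so $[\mathbb Q(\theta):\mathbb Q]=2[K(\theta):K]$. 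The claim therefore reduces to showing that $x^n-\varphi$ is irreducible over $K$.

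For irreducibility we use Capelli's theorem: $x^n-a$ is irreducible over a field $F$ of characteristic zero iff $a\notin F^p$ for every prime $p\mid n$, and $a\notin -4F^4$ when $4\mid n$. Here $n$ is odd, so only the first condition matters. We must show $\varphi$ is not a $p$th power in $K$ for any odd prime $p\mid n$.

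Suppose $\varphi=\beta^p$ with $\beta\in K$.
\begin{itemize}
\item Then $\beta$ is integral over $\mathbb Z$, since it is a root of $x^p-\varphi$ with $\varphi$ integral. Hence $\beta\in\mathcal O_K=\mathbb Z[\varphi]$.
\item $\beta$ is a unit, since $N(\beta)^p=N(\varphi)=-1$.
\item By Dirichlet's unit theorem for real quadratic fields, which can be made elementary here, $\mathcal O_K^\times=\{\pm\varphi^k\}$ with $\varphi$ the fundamental unit. So $\beta=\pm\varphi^k$ and $\varphi=\pm\varphi^{kp}$.
\item Since $\varphi>1$ is not a root of unity, this forces $kp=1$, which is impossible for $p\ge3$.
\end{itemize}

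The only real obstacle is justifying that $\varphi$ is fundamental, i.e.\ that no unit lies strictly between $1$ and $\varphi$. A unit $\varepsilon=(a+b\sqrt5)/2>1$ has conjugate $\pm\varepsilon^{-1}$, so $|\bar\varepsilon|<1$. This gives $b\sqrt5=\varepsilon-\bar\varepsilon>0$ and $a=\varepsilon+\bar\varepsilon>0$, so $a,b\ge1$ and $\varepsilon\ge(1+\sqrt5)/2$. Alternatively, one can avoid Capelli by noting that any divisor $d>1$ of $n$ would give a root of $x^{n/d}$-type factors with constant term a unit of $K$ whose power is $\varphi$. The same unit argument handles that case.

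Finally, for the "in particular" clause: a compositum of two quadratic fields has degree at most $4$ over $\mathbb Q$, while $2n\ge6$. So $\theta$ cannot lie in such a compositum.
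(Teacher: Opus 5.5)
Your proof is correct, but it takes a different route from the paper. You reduce to $\theta^n=\varphi$ using $\theta\mapsto-\theta$ and Galois conjugation, via $\varphi^{-1}=-\bar\varphi$. You then invoke Capelli's binomial criterion, which turns irreducibility of $x^n-\varphi$ over $K$ into the statement that $\varphi$ is not a $p$th power in $K$ for any prime $p\mid n$; the unit group $\mathcal O_K^\times=\{\pm\varphi^k\}$ settles that. The paper never factors $x^n-\eta$. It sets $d=[K(\theta):K]\le n$ and notes that $\theta$ is a unit, so $N_{K(\theta)/K}(\theta)=\pm\varphi^k$. Taking norms in $\theta^n=\eta=\pm\varphi^{\pm1}$ gives $nk=\pm d$, hence $n\mid d$ and $d=n$. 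That norm argument handles all four values of $\eta$ at once, needs no sign or conjugation reduction, and avoids citing Capelli's theorem. Your version isolates a clean arithmetic criterion (no $p$th roots of $\varphi$ in $K$), at the cost of relying on an external irreducibility theorem. Both proofs rest on the same unit-group computation. Your proof that no unit lies in $(1,\varphi)$, using $a=\varepsilon+\bar\varepsilon>0$ and $b\sqrt5=\varepsilon-\bar\varepsilon>0$, is a tidy replacement for the paper's case split on the norm. Your ``Capelli-free'' alternative is in fact the paper's argument, but as phrased it is imprecise: the degrees of irreducible factors of $x^n-\varphi$ need not divide $n$. The correct statement is this: a monic factor of degree $0<d'<n$ has constant term $c=\pm\zeta\theta^{d'}$ for some $n$th root of unity $\zeta$, with $c\in\mathcal O_K^\times$ and $c^n=\pm\varphi^{d'}$. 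This forces $n\mid d'$, which is a contradiction.
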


\begin{proof}
First we prove that
\[
\mathcal O_K=\mathbb Z[\varphi],
\qquad
\mathcal O_K^\times=\{\pm\varphi^m:m\in\mathbb Z\}.
\]
The first identity is the standard description of the ring of integers
of the quadratic field of discriminant $5$. For the second, let
$\epsilon$ be a unit. After replacing it by its negative or inverse, we
may assume that its chosen real embedding satisfies $\epsilon\ge1$.
Choose $m\in\mathbb Z$ such that
$\varphi^m\le\epsilon<\varphi^{m+1}$, and set
$\delta=\epsilon\varphi^{-m}$. Then $\delta$ is a unit and
$1\le\delta<\varphi$. Write
$\delta=(u+v\sqrt5)/2$, where $u,v\in\mathbb Z$ and $u\equiv v\pmod2$.
If $N_{K/\mathbb Q}(\delta)=1$, the conjugate embedding is
$\delta^{-1}$, so
$2\le u=\delta+\delta^{-1}<\varphi+\varphi^{-1}=\sqrt5<3$.
Thus $u=2$, and the norm equation $u^2-5v^2=4$ gives $v=0$, hence
$\delta=1$. If the norm is $-1$, the conjugate embedding is
$-\delta^{-1}$, whence
$0\le u=\delta-\delta^{-1}<\varphi-\varphi^{-1}=1$. Thus $u=0$.
The norm equation $u^2-5v^2=-4$ would then give $5v^2=4$, which is
impossible for $v\in\mathbb Z$. Hence the norm-$-1$ case cannot occur.
Therefore $\delta=1$, and every unit is of the form $\pm\varphi^m$.

Set $\eta=\theta^n=\pm\varphi^e$, where $e=\pm1$. Since $\eta$ is
not rational, $K=\mathbb Q(\eta)\subseteq\mathbb Q(\theta)$. Let
$d=[K(\theta):K]$. The element $\theta$ is a root of the monic polynomial
$T^n-\eta\in\mathcal O_K[T]$, so it is integral over $\mathcal O_K$ and
hence an algebraic integer. Moreover,
$\theta^{-1}=\eta^{-1}\theta^{n-1}$ is also an algebraic integer, so
$\theta$ is a unit. Since $\theta$ satisfies a polynomial of degree $n$
over $K$, we have $d\le n$. Hence
$N_{K(\theta)/K}(\theta)=\pm\varphi^k$ for some $k\in\mathbb Z$.
Taking norms in $\theta^n=\eta$ gives
$(\pm\varphi^k)^n=(\pm\varphi^e)^d$. Comparing exponents of $\varphi$
yields $nk=ed$. Since $e=\pm1$, we have $n\mid d$. Together with
$d\le n$, this gives $d=n$, and therefore
$[\mathbb Q(\theta):\mathbb Q]=[K(\theta):K][K:\mathbb Q]=2n$.
\end{proof}

\begin{lemma}\label{lem:mixed-delta-nonzero}
Let $n\ge3$ be odd and suppose that $X^n+Y^n=Z^n$. Assume that
$A=X^n$, $B=Y^n$, and $C=Z^n$ are all nonscalar and that
$[A,B]\ne0$.

\begin{enumerate}[label=\textup{(\roman*)}]
\item if $\det A=\det C=-1$ and $\det B=1$, then
$\det[A,B]\ne0$;
\item if $\det A=-1$ and $\det B=\det C=1$, then
$\det[A,B]\ne0$.
\end{enumerate}
\end{lemma}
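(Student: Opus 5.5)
We argue by contradiction: assume $\det[A,B]=0$ while $[A,B]\ne0$. By Lemma~\ref{lem:common-eigenvector}, $A$ and $B$ have a common eigenline $L\subset\overline{\mathbb Q}^{\,2}$, and since $C=A+B$, the line $L$ is also an eigenline of $C$. All three matrices are nonscalar, so $L$ is a full eigenspace of each of them. Following the proof of Lemma~\ref{lem:SL2-singular-commutator}, we split according to whether $L$ is defined over $\mathbb Q$.

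\emph{Case 1: $L$ is not rational.} Then $L$ is defined over a quadratic field, and its Galois conjugate $L'\ne L$ is also a common eigenline. Hence $A$ and $B$ are simultaneously diagonalizable, so $[A,B]=0$, a contradiction.

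\emph{Case 2: $L$ is rational.} Choose a primitive integral vector spanning $L$, extend it to a $\mathbb Z$-basis, and conjugate by the resulting element of $G$. Then $A,B,C$ become upper triangular integral matrices with unit diagonal entries:
\[
A=\begin{pmatrix}\alpha&*\\0&\alpha'\end{pmatrix},\quad
B=\begin{pmatrix}\beta&*\\0&\beta'\end{pmatrix},\quad
C=\begin{pmatrix}\alpha+\beta&*\\0&\alpha'+\beta'\end{pmatrix},
\]
with all diagonal entries in $\{\pm1\}$. Two consequences follow.
\begin{enumerate}[label=\textup{(\alph*)}]
\item From $\alpha+\beta\in\{\pm1\}$ with $\alpha,\beta\in\{\pm1\}$ we get $\alpha+\beta\in\{0,\pm2\}$, which is impossible.
\end{enumerate}
So Case 2 is excluded by this diagonal parity observation alone; in particular the determinant hypotheses in (i) and (ii) are not even needed here.

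It remains to check this parity step, which is the only delicate point. The diagonal entries of $C$ are the eigenvalues of $C$ on $L$ and on $\mathbb Z^2/L$. These are rational algebraic integers whose product is $\det C=\pm1$, so each is $\pm1$. The same holds for $A$ and $B$. Since the sum of two odd integers is even, the first diagonal entry of $C$ cannot be $\pm1$, and the contradiction is genuine.

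The main obstacle I expect is that this argument seems to make hypotheses (i) and (ii) superfluous. The likely intended route instead uses the determinant signs, for instance showing $\det A=-1$ forces distinct real eigenvalues $\pm$ unit. I would therefore double-check that the rational-eigenline reduction is valid when $A$ has $\det A=-1$, which makes $A$ diagonalizable over $\mathbb Q$ only if its eigenvalues are rational. The argument above never needed diagonalizability, only triangularization along the common rational line, so I expect it to go through. The proof is then Case 1 (via the Galois-conjugate line) together with Case 2 (via the parity contradiction $\pm1\pm1\ne\pm1$).
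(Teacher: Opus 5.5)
Your proof is correct, and it takes a genuinely different route from the paper's.

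Both arguments begin with a common eigenline of $A$ and $B$ from Lemma~\ref{lem:common-eigenvector}. From there the paper proceeds as follows:
\begin{enumerate}[label=\textup{(\alph*)}]
\item It uses Lemma~\ref{lem:nonscalar-centralizer} to place $X\in\mathbb Q[A]$, $Y\in\mathbb Q[B]$, $Z\in\mathbb Q[C]$.
\item It triangularizes $X,Y,Z$ simultaneously over $\overline{\mathbb Q}$.
\item It combines $\Lambda+M=N$ with the second-diagonal relation, which depends on the prescribed determinant signs, to force $(\mu/\lambda)^n\in\{\pm\varphi^{\pm1}\}$.
\item Lemma~\ref{lem:golden-root} rules this out, since $\mu/\lambda$ has degree at most $4<2n$.
\end{enumerate}

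You instead reuse the rational/irrational dichotomy from the proof of Lemma~\ref{lem:SL2-singular-commutator}. An irrational eigenline and its Galois conjugate diagonalize $A$ and $B$ simultaneously, so $[A,B]=0$. A rational eigenline gives a simultaneous integral triangularization in which every diagonal entry of $A$, $B$, $C$ is $\pm1$. Then $C_{11}=A_{11}+B_{11}$ is even, which is impossible.

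What your route buys:
\begin{itemize}
\item It is much more elementary: there is no unit group of $\mathbb Z[\varphi]$ and no degree count.
\item It proves a stronger statement. You use only $A,B,A+B\in G$, so for every $A,B\in G$ with $A+B\in G$, $\det[A,B]=0$ forces $[A,B]=0$.
\item That statement is independent of $n$, of the determinant pattern, and of $A,B,C$ being $n$th powers. The nonscalar hypotheses are automatic once $[A,B]\ne0$.
\item It would cover uniformly the all-negative and all-positive patterns, which the paper handles separately via $\Delta=a^2+ab+b^2+3>0$ and Lemma~\ref{lem:SL2-singular-commutator}.
\end{itemize}

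The paper's argument, by contrast, stays over $\overline{\mathbb Q}$ and never splits on rationality. It pays for this by needing the root structure, odd $n\ge3$, and the specific sign patterns that produce the golden-ratio equation.

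A cosmetic point: you announce ``two consequences'' but list only one.
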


\begin{proof}
Suppose, to the contrary, that $\det[A,B]=0$. By
Lemma~\ref{lem:common-eigenvector}, $A$ and $B$ have a common eigenline
$\ell$. Since $A,B,C=A+B$ are nonscalar and
$XA=AX$, $YB=BY$, $ZC=CZ$, Lemma~\ref{lem:nonscalar-centralizer} gives
$X\in\mathbb Q[A]$, $Y\in\mathbb Q[B]$, and $Z\in\mathbb Q[C]$.
Thus $X,Y,Z$ all preserve $\ell$. Choosing a basis whose first vector is
a nonzero vector of $\ell$, we may write $X,Y,Z$ simultaneously in upper
triangular form.

Let $\lambda,\mu,\nu$ be their eigenvalues on the common eigenline,
and put $\Lambda=\lambda^n$, $M=\mu^n$, and $N=\nu^n$.
The first diagonal entries give $\Lambda+M=N$.

In case \textup{(i)}, the second diagonal entries give
$-\Lambda^{-1}+M^{-1}=-N^{-1}$. Eliminating $N=\Lambda+M$ yields
$\Lambda^2+\Lambda M-M^2=0$. Consequently,
$\left(\mu/\lambda\right)^n$ is a root of $T^2-T-1$, and hence belongs
to $\{\varphi,-\varphi^{-1}\}$.

In case \textup{(ii)}, the second diagonal entries give
$-\Lambda^{-1}+M^{-1}=N^{-1}$, whence
$\Lambda^2-\Lambda M-M^2=0$. Thus
$\left(\mu/\lambda\right)^n\in\{\varphi^{-1},-\varphi\}$.

On the other hand, $\lambda$ and $\mu$ are eigenvalues of integral
$2\times2$ matrices, so each has degree at most $2$. Hence
$\mu/\lambda$ belongs to a number field of degree at most $4$, contrary
to Lemma~\ref{lem:golden-root}.
\end{proof}

\begin{lemma}\label{lem:odd-scalar-power}
Let $T\in\GL_2(\mathbb Z)$, let $n$ be odd, and suppose that
$T^n=\sigma\Id$ with $\sigma\in\{\pm1\}$. Then $T$ has finite order.
If $3\nmid n$, then $T=\sigma\Id$. If $3\mid n$, the only additional
possibilities are: when $\sigma=1$, $T$ is a nonscalar matrix of order
$3$; when $\sigma=-1$, $T$ is a nonscalar matrix of order $6$.
\end{lemma}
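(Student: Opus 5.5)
The plan is to derive finite order from $T^n=\sigma\Id$ and then use Lemma~\ref{lem:finite-orders} to restrict the possible orders, deciding each case by how $n$ interacts with the order.

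First, $T^{2n}=\sigma^2\Id=\Id$, so $T$ has finite order dividing $2n$. By Lemma~\ref{lem:finite-orders}, $\ord(T)\in\{1,2,3,4,6\}$. Since $2n\equiv2\pmod4$, we have $4\nmid 2n$, so order $4$ is excluded. Hence $\ord(T)\in\{1,2,3,6\}$, and in particular $T^6=\Id$.

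Next, determine $T^n$ in each case, using that $n$ is odd.

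If $\ord(T)=1$, then $T=\Id$ and $T^n=\Id$.

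If $\ord(T)=2$, then $T^n=T$. The hypothesis forces $T=\sigma\Id$.

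If $\ord(T)=3$, then $T^n=T^{n\bmod 3}$. When $3\nmid n$ this equals $T$ or $T^2$, and neither is scalar. Indeed, a finite-order matrix that is scalar must be $\pm\Id$, which has order at most $2$; and $T^2$ also has order $3$, so it cannot be scalar either. Thus this case requires $3\mid n$, and then $T^n=\Id$, giving $\sigma=1$.

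If $\ord(T)=6$, Lemma~\ref{lem:finite-orders} gives $T^3=-\Id$. Since $n$ is odd, $n\equiv1,3,5\pmod6$, so $T^n$ equals $T$, $T^3=-\Id$, or $T^5=-T^2$ respectively. When $n\equiv1$ or $5\pmod6$, $T^n$ is $T$ or $-T^2$, and both are nonscalar: $T$ has order $6$, and $T^2$ has order $3$. So this case requires $3\mid n$, and then $T^n=-\Id$, giving $\sigma=-1$.

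Finally, assemble the cases. If $3\nmid n$, the only surviving cases are $\ord(T)\le2$ with $T=\sigma\Id$; note that $\ord(T)=1$ forces $\sigma=1$, which is consistent. If $3\mid n$, the additional possibilities are exactly order $3$ with $\sigma=1$ and order $6$ with $\sigma=-1$. Both are nonscalar because their orders exceed $2$.

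No step is a real obstacle. The only care needed is to exclude order $4$ via $4\nmid 2n$, and to justify the nonscalarity of the powers $T$, $T^2$ and $-T^2$ by their orders.
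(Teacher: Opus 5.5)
Your proof is correct and follows essentially the same route as the paper: you get finite order from $T^{2n}=\Id$, restrict to the torsion orders of Lemma~\ref{lem:finite-orders}, and check each order directly. The differences are cosmetic. You exclude order $4$ via $4\nmid 2n$, whereas the paper notes that an odd power of an order-$4$ matrix is $\pm T$. You also organize the cases by $\ord(T)$, whereas the paper splits first by $\sigma$.
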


\begin{proof}
From $T^n=\sigma\Id$ we get $T^{2n}=\Id$, so $T$ has finite order.
By the classification of integral torsion matrices in dimension two, its
order belongs to $\{1,2,3,4,6\}$. If $\sigma=1$, then $T^n=\Id$, so
$\ord(T)\mid n$. Since $n$ is odd, either $\ord(T)=1$, or, when
$3\mid n$, $\ord(T)=3$. The first case gives
$T=\Id$.

If $\sigma=-1$, then $T^n=-\Id$. Orders $1$ and $3$ are impossible.
For order $4$, an odd power is still $\pm T$ and cannot be scalar. If
$\ord(T)=2$, then $T^n=T$, so necessarily $T=-\Id$. If
$\ord(T)=6$, then $T^3=-\Id$, and $T^n=-\Id$ precisely when
$n\equiv3\pmod6$, that is, when $3\mid n$.
\end{proof}

\begin{lemma}\label{lem:positive-torsion-normal}
Let $T\in\SL_2(\mathbb Z)$ be a nonscalar torsion matrix. According as
$\tr T=-1,0,1$, the matrix $T$ is integrally conjugate to $R,J,Q$,
respectively. Moreover,
\[
C_{\GL_2(\mathbb Z)}(R)=C_{\GL_2(\mathbb Z)}(Q)=\langle Q\rangle,
\qquad
C_{\GL_2(\mathbb Z)}(J)=\langle J\rangle.
\]
\end{lemma}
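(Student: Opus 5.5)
By Lemma~\ref{lem:finite-orders}, a nonscalar torsion $T\in\SL_2(\mathbb Z)$ has order $3$, $4$ or $6$. Its characteristic polynomial is then $\Phi_3$, $\Phi_4$ or $\Phi_6$, so $\tr T=-1,0,1$ respectively. In each case $T$ acts on $\mathbb Z^2$ as an element of the corresponding cyclotomic ring. Concretely, $\mathbb Z^2$ becomes a module over $\mathbb Z[T]\cong\mathbb Z[x]/(\Phi_k)$, which is $\mathbb Z[\omega]$ for $k=3,6$ and $\mathbb Z[i]$ for $k=4$.

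The module $\mathbb Z^2$ is torsion-free of rank one over this ring. It is finitely generated, hence isomorphic to a nonzero ideal. By Lemma~\ref{lem:quadratic-euclidean} every such ideal is principal, so $\mathbb Z^2\cong\mathbb Z[T]$ as modules. Choose the integral basis $1,\theta$ of the ring, where $\theta$ is the image of $x$. In this basis multiplication by $\theta$ is the companion matrix of $\Phi_k$. Hence $T$ is $\GL_2(\mathbb Z)$-conjugate to the companion matrix of its characteristic polynomial.

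It remains to check that $R$, $J$, $Q$ are themselves these companion matrices, or conjugate to them. This is immediate from their traces and determinants:
\[
\Phi_3=x^2+x+1,\qquad \Phi_4=x^2+1,\qquad \Phi_6=x^2-x+1.
\]
Each of $R$, $J$, $Q$ is nonscalar with the right characteristic polynomial, so the same module argument applies to it. In each case we get a single conjugacy class, which yields the first assertion.

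For the centralizers, we use Lemma~\ref{lem:nonscalar-centralizer}. The rational centralizer of $R$ is $\mathbb Q[R]$, and its integral points are $\mathbb Z^2\cap\mathbb Q[R]$. An element $\alpha\Id+\beta R$ with $\alpha,\beta\in\mathbb Q$ has integer entries iff $\alpha,\beta\in\mathbb Z$, since $R$ has an off-diagonal entry $-1$ and a zero diagonal entry. So the integral centralizer is $\mathbb Z[R]\cong\mathbb Z[\omega]$. Its elements lying in $\GL_2(\mathbb Z)$ are the units, and the determinant corresponds to the norm. The norm is positive definite, so all units have norm $1$. The unit group of $\mathbb Z[\omega]$ is the six elements of norm $a^2-ab+b^2=1$, namely $\pm1,\pm\omega,\pm\omega^2$. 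These correspond to $\pm\Id,\pm R,\pm R^2$, which is exactly $\langle Q\rangle$, since $Q=-R^2$ has order $6$.

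Since $Q=-R^2$, we have $\mathbb Q[Q]=\mathbb Q[R]$, so $Q$ has the same centralizer. Similarly, $\mathbb Z[J]\cong\mathbb Z[i]$ has units $\pm1,\pm i$, giving $\langle J\rangle$.

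The only delicate point is the module-theoretic step identifying $\mathbb Z^2$ with a principal ideal. I would make it explicit: the ideal is principal by Lemma~\ref{lem:quadratic-euclidean}, a generator gives a $\mathbb Z[T]$-linear isomorphism $\mathbb Z[T]\to\mathbb Z^2$, and the change-of-basis matrix is therefore integral with integral inverse.
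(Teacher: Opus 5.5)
Your proof is correct and is essentially the paper's argument: you make $\mathbb Z^2$ a module over $\mathbb Z[\omega]$ or $\mathbb Z[i]$, use the PID property to get a free rank-one module, read off the matrix in a basis $e,Te$, and identify the integral centralizer with the unit group of the order. The differences are cosmetic. You reach $Q$ via the companion matrix of $\Phi_6$ and transitivity, whereas the paper chooses the basis $e,(\zeta-1)e$ to land on $Q$ directly. You also compute the centralizer as $M_2(\mathbb Z)\cap\mathbb Q[R]=\mathbb Z[R]$ (your ``$\mathbb Z^2\cap\mathbb Q[R]$'' should read $M_2(\mathbb Z)\cap\mathbb Q[R]$), whereas the paper treats a commuting matrix as a module automorphism.
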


\begin{proof}
First suppose that $\tr T=-1$. Then $T^2+T+\Id=0$. Let
$\mathcal O=\mathbb Z[\omega]$, where $\omega^2+\omega+1=0$. Defining
$\omega\cdot v=Tv$ makes $\mathbb Z^2$ an $\mathcal O$-module. It is
clearly finitely generated. If $0\ne\alpha\in\mathcal O$ and
$\alpha v=0$, then $\alpha\ne0$ in $K=\mathbb Q(\omega)$, and the
operator $\alpha(T)$ is invertible on $\mathbb Q^2$, so $v=0$; hence the
module is torsion-free. After tensoring with $\mathbb Q$, the space
$\mathbb Q^2$ is one-dimensional over $K$, so the $\mathcal O$-module
has rank one. By Lemma~\ref{lem:quadratic-euclidean}, $\mathcal O$ is a
principal ideal domain; every finitely generated torsion-free rank-one
module over $\mathcal O$ is therefore free of rank one. Choose an
$\mathcal O$-basis vector $e$. Then $e,\omega e$ is an integral basis of
$\mathbb Z^2$, and multiplication by $\omega$ has matrix
\[
\begin{pmatrix}0&-1\\1&-1\end{pmatrix}=R.
\]
Thus $T$ is integrally conjugate to $R$.

If $\tr T=0$, the same argument with $\mathbb Z[i]$, where $i^2=-1$,
shows that $T$ is integrally conjugate to
\[
J=\begin{pmatrix}0&-1\\1&0\end{pmatrix}.
\]
If $\tr T=1$, put $\zeta=1+\omega$, where
$\omega^2+\omega+1=0$. Then $\zeta^2-\zeta+1=0$,
$\mathbb Z[\zeta]=\mathbb Z[\omega]$, and $\zeta-1=\omega$.
Again $\mathbb Z^2$ is a free rank-one module over
$\mathbb Z[\zeta]$. Choose a $\mathbb Z[\zeta]$-basis vector $e$. Since $\{1,\zeta-1\}=\{1,\omega\}$ is a $\mathbb Z$-basis of
$\mathbb Z[\zeta]=\mathbb Z[\omega]$, the vectors $e$ and
$(\zeta-1)e$ form an integral basis of $\mathbb Z^2$. Moreover,
$\zeta e=e+(\zeta-1)e$ and $\zeta(\zeta-1)e=-e$,
so multiplication by $\zeta$ has matrix
\[
Q=\begin{pmatrix}1&-1\\1&0\end{pmatrix}.
\]
Hence $T$ is integrally conjugate to $Q$. We now compute the
centralizers. Let $S\in\GL_2(\mathbb Z)$ commute with $R$. Relative to
the free rank-one $\mathcal O$-module structure above, $S$ is an
$\mathcal O$-module automorphism, and therefore is multiplication by a
unit $\alpha\in\mathcal O^\times$. Conversely, every unit gives an
integral module automorphism. Hence
$C_{\GL_2(\mathbb Z)}(R)\cong\mathcal O^\times$. The Eisenstein norm is
$N(a+b\omega)=a^2-ab+b^2$, and the units are exactly the elements of
norm $1$. Since
$4N(a+b\omega)=(2a-b)^2+3b^2=4$, we have $|b|\le1$; checking
$b=-1,0,1$ gives exactly six solutions. Thus
$\mathcal O^\times=\{\pm1,\pm\omega,\pm\omega^2\}$, corresponding to
$\{\Id,Q,Q^2,Q^3,Q^4,Q^5\}=\langle Q\rangle$. Similarly,
$\mathbb Z[i]^\times=\{\pm1,\pm i\}$, so
$C_{\GL_2(\mathbb Z)}(J)=\langle J\rangle$. Finally, $Q$ and
$R=Q^2$ define the same Eisenstein-order action, and hence
$C_{\GL_2(\mathbb Z)}(Q)=\langle Q\rangle$.
\end{proof}

\begin{lemma}\label{lem:torsion-root-centralizer}
Let $A,T\in\GL_2(\mathbb Z)$, where $A$ is a nonscalar torsion matrix,
and suppose that $T^m=A$ for some $m\ge1$. Then $T$ has finite order,
$TA=AT$, and $T\in C_{M_2(\mathbb Q)}(A)=\mathbb Q[A]$. Moreover, if
$m$ is odd and $3\nmid m$, and if $d=\ord(A)$, then there is a unique
residue class $e\in\mathbb Z/d\mathbb Z$ satisfying $me\equiv1\pmod d$,
and $T=A^e$.
\end{lemma}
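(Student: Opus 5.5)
The plan is to prove the three assertions in order: finiteness of the order of $T$, commutation with $A$, and then the explicit description $T=A^e$. For finiteness, $A$ has finite order $d$, so $T^{md}=A^d=\Id$, and hence $T$ has finite order. Commutation is immediate, since $TA=T\cdot T^m=T^{m+1}=T^mT=AT$. Because $A$ is nonscalar, Lemma~\ref{lem:nonscalar-centralizer} then gives $T\in C_{M_2(\mathbb Q)}(A)=\mathbb Q[A]$.

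For the second part, I would first check that $\gcd(m,d)=1$. By Lemma~\ref{lem:finite-orders}, $d\in\{2,3,4,6\}$; note $d\ne1$ since $A$ is nonscalar. The hypotheses that $m$ is odd and $3\nmid m$ make $m$ coprime to $6$, hence to $d$. So $m$ is invertible modulo $d$, and the class $e$ with $me\equiv1\pmod d$ exists and is unique. It remains to show $T=A^e$.

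Set $S=T^{e}$ for some integer representative $e$. Then $S^m=T^{me}$ and $A^e=T^{me}$, so it suffices to show $T^{me}=T$, i.e.\ that $\ord(T)$ divides $me-1$. Write $k=\ord(T)$. By Lemma~\ref{lem:finite-orders}, $k\in\{1,2,3,4,6\}$, so $\gcd(m,k)=1$. Then $A=T^m$ has order $k/\gcd(m,k)=k$, which gives $k=d$. Since $d\mid me-1$, we get $T^{me-1}=\Id$, and therefore $T=T^{me}=(T^m)^e=A^e$.

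The only delicate point is the order computation $\ord(T^m)=\ord(T)$. This rests on coprimality, which in turn uses the restriction of integral torsion orders to $\{1,2,3,4,6\}$ together with $\gcd(m,6)=1$. No case analysis on $\det A$ or on the conjugacy type of $A$ is needed.
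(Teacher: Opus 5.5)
Your proposal is correct and follows the paper's proof essentially step for step: finite order from $T^{md}=\Id$, commutation via $T^{m+1}$, the nonscalar-centralizer lemma, and then $\gcd(\ord(T),m)=1$ (from the torsion orders $\{1,2,3,4,6\}$) forcing $\ord(T)=\ord(A)$, so that $T=A^e$. The auxiliary matrix $S=T^e$ plays no role and can be dropped.
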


\begin{proof}
From $T^m=A$ and $A^d=\Id$, where $d=\ord(A)$, we get
$T^{md}=A^d=\Id$, so $T$ has finite order. Moreover,
$TA=T^{m+1}=AT$. Since $A$ is nonscalar,
Lemma~\ref{lem:nonscalar-centralizer} gives
$T\in C_{M_2(\mathbb Q)}(A)=\mathbb Q[A]$.

Now assume that $m$ is odd and $3\nmid m$, and put $D=\ord(T)$.
By Lemma~\ref{lem:finite-orders}, $D\in\{1,2,3,4,6\}$. Because $m$ is
odd and $3\nmid m$, $\gcd(D,m)=1$, and consequently
$d=\ord(T^m)=D/\gcd(D,m)=D$. Thus $T$ and $A=T^m$ generate the same
finite cyclic group, $\langle T\rangle=\langle A\rangle$. Choose the
unique residue class $e\in\mathbb Z/d\mathbb Z$ such that
$me\equiv1\pmod d$. Then $T=A^e$.
\end{proof}

\begin{lemma}\label{lem:power-commuting-lift}
Let $n\ge5$ be odd and suppose that $X^n+Y^n=Z^n$. If
$[X^n,Y^n]=0$, then $X,Y,Z$ commute pairwise.
\end{lemma}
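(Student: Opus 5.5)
The plan is to pass to the commutative algebra generated by the powers. Put $A=X^n$, $B=Y^n$, $C=Z^n=A+B$. Since $[A,B]=0$, the three matrices $A,B,C$ commute pairwise, and each of $X,Y,Z$ commutes with its own power. The argument splits according to how many of $A,B,C$ are scalar, that is, equal to $\pm\Id$, since those are the only scalar matrices in $\GL_2(\mathbb Z)$.

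\emph{Generic case: $A$, $B$, $C$ all nonscalar.}
1. Lemma~\ref{lem:nonscalar-centralizer} gives $X\in C_{M_2(\mathbb Q)}(A)=\mathbb Q[A]$.
2. Since $B$ is nonscalar and commutes with $A$, the same lemma gives $\mathbb Q[B]=\mathbb Q[A]$, so $Y\in\mathbb Q[A]$.
3. Likewise $C\in\mathbb Q[A]$ is nonscalar, so $\mathbb Q[C]=\mathbb Q[A]$ and $Z\in\mathbb Q[A]$.
4. Hence $X,Y,Z$ all lie in the commutative algebra $\mathbb Q[A]$ and commute pairwise.

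\emph{Two scalar powers cannot occur.} If two of $A,B,C$ are scalar, then so is the third, because $C=A+B$. The third is then $A+B\in\{0,\pm2\Id\}$, which is not invertible over $\mathbb Z$. So at most one of $A,B,C$ is scalar.

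\emph{Exactly one scalar power.} By the odd symmetry $\rho_\pi$ (which only permutes $X,Y,-Z$ and preserves pairwise commutativity), we may assume $A=\sigma\Id$. Then $B$ and $C=B+\sigma\Id$ are nonscalar and commute. As in the generic case, $Y,Z\in\mathbb Q[B]$, so $Y$ and $Z$ commute.
1. If $3\nmid n$, Lemma~\ref{lem:odd-scalar-power} forces $X=\sigma\Id$, and we are done.
2. If $3\mid n$, Lemma~\ref{lem:odd-scalar-power} still allows a nonscalar $X$ of order $3$ or $6$. This is the real obstacle, and the plan is to show the configuration $A=\sigma\Id$ cannot arise at all when $3\mid n$.
3. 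Use $\det C=\det(B+\sigma\Id)=\det B+\sigma\tr B+1$, which must equal $\pm1$.

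\emph{Subcase $\det B=-1$.} The determinant condition forces $\sigma\tr B=\pm1$, so $|\tr B|=1$. Now $\det Y=-1$.
1. If $\tr Y\neq0$, Lemma~\ref{lem:growth-minus} gives $|\tr B|\ge11$, a contradiction.
2. If $\tr Y=0$, then $Y^2=\Id$ and $B=Y$, so $\tr B=0$, again a contradiction.

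\emph{Subcase $\det B=1$.} The determinant condition forces $\sigma\tr B\in\{-1,-3\}$, so $|\tr B|\in\{1,3\}$. Run through $t=\tr Y$ with the eigenvalue formulas in the proof of Lemma~\ref{lem:growth-plus}.
1. If $|t|\ge3$, then $|\tr B|=\lambda^n+\lambda^{-n}$ is far larger than $3$.
2. If $|t|=2$, the eigenvalues are $\pm1$, so $|\tr B|=2$.
3. If $t=0$, then $Y$ has order $4$ and $Y^n=\pm Y$, so $\tr B=0$.
4. If $t=\pm1$, then $Y$ has order $3$ or $6$, and since $3\mid n$ the power $Y^n$ is $\pm\Id$. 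That makes $B$ scalar, contrary to assumption; equivalently, the table in Lemma~\ref{lem:growth-plus} gives $|\tr B|=2$ when $3\mid n$.

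None of these gives $|\tr B|\in\{1,3\}$, so $A=\sigma\Id$ is impossible when $3\mid n$. This completes the case analysis.

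The main obstacle is the case where exactly one power is scalar and $3\mid n$. There the centralizer argument breaks down, and one has to rule out the configuration by this determinant-and-trace bookkeeping, which I expect to be routine once set up.
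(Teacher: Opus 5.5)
Your proof is correct. The generic (all-nonscalar) case is the same as the paper's, but you organize the scalar case differently.

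The paper runs one trace analysis for every odd $n$:
\begin{enumerate}
\item It rules out $\det Y=-1$ via Lemma~\ref{lem:growth-minus}.
\item It uses the dichotomy of Lemma~\ref{lem:growth-plus} together with $q^2\mid(\tr B)^2-4$ to force $|\tr B|=1$. This puts $B$ and $C$ in a common cyclic group of order $6$.
\item Only then does it split on $3\mid n$. For $3\nmid n$ it invokes Lemma~\ref{lem:torsion-root-centralizer} to write $Y=B^{e_B}$ and $Z=C^{e_C}$.
\end{enumerate}

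You split on $3\mid n$ first, which changes what each branch needs:
\begin{enumerate}
\item For $3\nmid n$ you need no trace information. Lemma~\ref{lem:odd-scalar-power} makes $X$ scalar, and $Y,Z$ commute because both lie in $\mathbb Q[B]=\mathbb Q[C]$.
\item For $3\mid n$ you show the configuration $A=\sigma\Id$ cannot occur by enumerating $\tr Y$ directly, instead of using the growth/divisibility dichotomy. The case $\tr Y=\pm1$ gives $B=\pm\Id$ outright.
\end{enumerate}

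Your version is shorter and isolates exactly where $3\mid n$ matters. The paper's version additionally describes the scalar branch for $3\nmid n$ (the other two powers have orders $3$ and $6$ in a common cyclic group), which is more than the lemma needs. You also state explicitly that two scalar powers are impossible, which the paper leaves implicit.
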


\begin{proof}
Put $A=X^n$, $B=Y^n$, and $C=Z^n=A+B$. First assume that
$A,B,C$ are all nonscalar. From $[A,B]=0$ and
Lemma~\ref{lem:nonscalar-centralizer},
$B\in C_{M_2(\mathbb Q)}(A)=\mathbb Q[A]$, and hence
$C=A+B\in\mathbb Q[A]$. Since $B$ and $C$ are nonscalar, each of them,
together with $\Id$, spans the two-dimensional algebra $\mathbb Q[A]$.
Therefore $\mathbb Q[B]=\mathbb Q[A]=\mathbb Q[C]$. Since $X$ commutes
with $A=X^n$, $Y$ with $B=Y^n$, and $Z$ with $C=Z^n$, we have
$X\in C(A)=\mathbb Q[A]$,
$Y\in C(B)=\mathbb Q[B]=\mathbb Q[A]$, and
$Z\in C(C)=\mathbb Q[C]=\mathbb Q[A]$.
Thus $X,Y,Z$ all lie in the commutative algebra $\mathbb Q[A]$, and
therefore commute pairwise.

It remains to consider the case in which at least one of $A,B,C$ is
scalar. By applying an element of $\Gamma$, we may
assume that $A$ is scalar. Write $A=k\Id$ with $k\in\mathbb Z$. Since
$A=X^n\in\GL_2(\mathbb Z)$, $1=|\det A|=k^2$, so $k=\pm1$. We may
therefore write $A=\sigma\Id$ with $\sigma\in\{\pm1\}$.
Then $C=B+\sigma\Id$. Put $\beta=\det B$ and $\gamma=\det C$.
Since
$\gamma=\det(B+\sigma\Id)=\beta+\sigma\tr B+1$, we obtain
$\tr B=\sigma(\gamma-\beta-1)\in\{\pm1,\pm3\}$. If
$\det Y=-1$, then $\det B=-1$. We first show that $\tr Y\ne0$.
Indeed, if $\tr Y=0$, the Cayley--Hamilton theorem gives $Y^2=\Id$.
Since $n$ is odd, $B=Y^n=Y$, so $\tr B=0$, contradicting
$\tr B\in\{\pm1,\pm3\}$. Hence $\tr Y\ne0$. Lemma~\ref{lem:growth-minus}
then gives $|\tr B|=|\tr(Y^n)|\ge11$, another contradiction. Thus
$\det Y=1$, and hence $\beta=\det B=1$.

Let $q=U_n(\tr Y,1)$. If $q=0$, the Cayley--Hamilton expansion gives
$B=Y^n=-U_{n-1}(\tr Y,1)\Id$,
so $B$ is an integral scalar matrix.
Since $\det B=1$, we have $B=\pm\Id$, and therefore $\tr B=\pm2$,
contrary to $\tr B\in\{\pm1,\pm3\}$.

Thus $q\ne0$. By Lemma~\ref{lem:growth-plus}, there are two
possibilities. In the growth branch,
$q^2\ge2|\tr B|+3$. On the other hand, the trace-discriminant identity
gives
$(\tr B)^2-4=((\tr Y)^2-4)q^2$, so $q^2$ divides $(\tr B)^2-4$.
If $|\tr B|=3$, then $q^2\ge9$ but $q^2\mid5$, impossible. If
$|\tr B|=1$, then $q^2\ge5$ but $q^2\mid3$, also impossible. Thus we
must be in the torsion branch of Lemma~\ref{lem:growth-plus}:
$|q|=1$ and $|\tr B|\le1$. Since
$\tr B\in\{\pm1,\pm3\}$, we get $|\tr B|=1$. Finally,
$\tr B=\sigma(\gamma-2)$ gives $\gamma=1$ and
$\sigma\tr B=-1$.

If $\sigma=1$, then $\tr B=-1$, so
$B^2+B+\Id=0$ and $C=B+\Id=-B^2$. Thus $B$ has order $3$, while
$-B^2$ has order $6$, and both lie in the cyclic group $\langle-B\rangle$
of order $6$. If $\sigma=-1$, then $\tr B=1$, so
$B^2-B+\Id=0$ and $C=B-\Id=B^2$. Here $B$ has order $6$ and
$C=B^2$ has order $3$; again both lie in the same cyclic group of order
$6$.

If $3\mid n$, this scalar branch cannot occur. Indeed, if
$Y^n=B$, then $B$ is torsion and
$Y^{n\,\ord(B)}=\Id$, so $Y$ itself is torsion, with order in
$\{1,2,3,4,6\}$. When $n$ is odd and $3\mid n$, the $n$th powers of
matrices of these five possible orders can have orders only
$1,2,1,4,2$, respectively; in particular they cannot be nonscalar of
order $3$. Likewise, no $n$th power of an integral torsion matrix can
have order $6$, so it cannot equal the nonscalar matrix of order $6$
above. If $3\nmid n$, then $\gcd(n,6)=1$. Let $G$ be the cyclic group
of order $6$ containing $B$ and $C$. Since $B$ and $C$ are nonscalar
torsion matrices and $Y^n=B$, $Z^n=C$,
Lemma~\ref{lem:torsion-root-centralizer} gives, more precisely,
\[
Y=B^{e_B},\qquad Z=C^{e_C},
\]
where $ne_B\equiv1\pmod{\ord(B)}$ and
$ne_C\equiv1\pmod{\ord(C)}$. In particular, $Y,Z\in G$ and hence
$YZ=ZY$. By Lemma~\ref{lem:odd-scalar-power},
$X^n=\sigma\Id$ gives $X=\sigma\Id$. Thus $X,Y,Z$ commute pairwise.
The other possible positions of the scalar power reduce to this one by
applying elements of $\Gamma$.
\end{proof}

\section{Proof of Theorem~\ref{thm:full-classification}}\label{sec:main-proof}

\subsection{Solvable exponents}\label{sec:solvability}

We first determine exactly which exponents can occur.  The two
obstructions, divisibility by $4$ and by $6$, are proved separately;
afterward an explicit construction gives solutions for every remaining
exponent.  The case $4\mid n$ is reduced to the companion
Pythagorean classification.

We use the following consequence of the ordered orbit classification of
the matrix Pythagorean equation.  It is precisely here that the
canonical transversal \(\mathcal T\) from
\eqref{eq:T-transversal-intro} enters the Fermat problem.

\begin{proposition}
\label{prop:pythagorean-orbit-input}
Let
\[
\mathcal P_+
:=\{(A,B,C)\in\SL_2(\mathbb Z)^3:A^2+B^2=C^2\}.
\]
Then
\begin{equation}\label{eq:Pplus-decomposition}
\mathcal P_+
=
\mathop{\bigsqcup}\limits_{\epsilon_1,\epsilon_2\in\{\pm1\}}
\mathop{\bigsqcup}\limits_{T\in\mathcal T}
\mathcal O_G(-\epsilon_1R^2,-\epsilon_2R,T).
\end{equation}
In particular, every \((A,B,C)\in\mathcal P_+\) satisfies
\(C^2=-\Id\), and hence \(C\) has order \(4\).
\end{proposition}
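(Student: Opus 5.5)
The decomposition \eqref{eq:Pplus-decomposition} is not something I would reprove here. It is exactly the ordered simultaneous-conjugacy classification of the determinant-one matrix Pythagorean equation established in the companion paper \cite{LiZhang2026Pythagorean}, and the plan is to import it and only check that it matches the present conventions. Concretely, I would cite its Theorem~1.1 for the statement that every \((A,B,C)\in\mathcal P_+\) is simultaneously $G$-conjugate to a triple \((-\epsilon_1R^2,-\epsilon_2R,T)\) with \(\epsilon_1,\epsilon_2\in\{\pm1\}\) and \(T\) in the residual orbit set. I would then cite Lemma~2.10 and Corollary~2.1 there for two further facts: that \(\mathcal T\) in \eqref{eq:T-transversal-intro} is a canonical transversal for the residual action of the common centralizer, and that distinct parameters \((\epsilon_1,\epsilon_2,T)\) give distinct orbits. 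These are exactly what justify writing \(\sqcup\) rather than \(\cup\).

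The only bookkeeping is to check that the notation agrees. The matrices \(R\), \(Q\), \(J\) and the transversal \(\mathcal T\) are those of \eqref{eq:basic-matrices} and \eqref{eq:T-transversal-intro}, the action is simultaneous conjugation by \(G=\GL_2(\mathbb Z)\), and the triples are ordered. As a sanity check that the representatives actually solve the equation, I would use \(R^3=\Id\) and \(R+R^2=-\Id\) from \eqref{eq:basic-relations}:
\[
(-\epsilon_1R^2)^2+(-\epsilon_2R)^2=R^4+R^2=R+R^2=-\Id .
\]
So the equation holds for a representative precisely when \(T^2=-\Id\).

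For the ``in particular'' clause I would argue directly. Take \(T=\smat{a&-a-c\\a+b&-a}\in\mathcal T\). Then \(\tr T=0\) and
\[
\det T=-a^2+(a+c)(a+b)=ab+bc+ca=1 .
\]
The Cayley--Hamilton theorem therefore gives \(T^2=-\Id\). The property \(C^2=-\Id\) is invariant under conjugation, and by \eqref{eq:Pplus-decomposition} every \((A,B,C)\in\mathcal P_+\) has \(C\) conjugate to some \(T\in\mathcal T\). Hence \(C^2=-\Id\), so \(C^4=\Id\) while \(C^2\ne\Id\), and \(C\) has order \(4\), consistent with Lemma~\ref{lem:finite-orders}.

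The main obstacle is not in this deduction, which is immediate. It is in making sure that the imported statement is the ordered, disjoint version and not merely a cover obtained using variable symmetries. That is why I would cite the transversal lemma and the corollary explicitly alongside the main theorem of \cite{LiZhang2026Pythagorean}.
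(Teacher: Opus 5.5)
Your strategy is the paper's: import the decomposition from the companion Pythagorean classification. Your check that each $T\in\mathcal T$ has trace $0$ and determinant $ab+bc+ca=1$, hence $T^2=-\Id$, is correct; the paper only asserts this. The gap is the claim that what remains is notation-matching. As the paper uses it, Theorem~1.1 of the companion is a \emph{trace-normalized} decomposition over all of $\GL_2(\mathbb Z)$, with canonical pieces of determinant patterns $(1,1,1)$, $(-1,1,1)$, $(1,-1,1)$. Applied to a triple of $\mathcal P_+$ after flipping signs to make the traces nonnegative, it only gives that the normalized triple lies in some $\mathcal O_G(-R^2,-R,T_0)$ with $T_0\in\mathcal T$; only the first piece survives, by determinants. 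The parameters $\epsilon_1,\epsilon_2$, the sign of the third component, and disjointness across sign pairs have to be deduced.

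Restoring the first two signs is harmless, since negation commutes with conjugation. The third sign is not automatic, because $-T_0$ is never in $\mathcal T$: its parameters would be $(-a,-b,-c)$, and $-a<-c$ contradicts $a<c$. So you must show that $\mathcal O_G(-\epsilon_1R^2,-\epsilon_2R,-T_0)$ is one of the listed orbits. The paper does this with Lemma~2.3 of the companion ($-T_0$ is integrally conjugate to $J$, i.e.\ lies in $\mathcal T_J$) and Corollary~2.1 there (a unique $T_1\in\mathcal T$ and $k\in\{0,1,2\}$ with $T_1=R^k(-T_0)R^{-k}$). Since $R^k$ centralizes $-\epsilon_1R^2$ and $-\epsilon_2R$, the two orbits coincide. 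Your proposal never mentions Lemma~2.3 and has no substitute for this step.

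Likewise, Lemma~2.10 and Corollary~2.1 compare orbits with a fixed first-two-component pair, so they cannot by themselves separate different sign pairs, contrary to what you attribute to them. The paper separates sign pairs by the conjugation invariant $\bigl(\tr(-\epsilon_1R^2),\tr(-\epsilon_2R)\bigr)=(\epsilon_1,\epsilon_2)$. For a fixed sign pair, it applies Lemma~2.10 only after noting that central signs do not change the condition that a conjugator centralize $R$; Corollary~2.1 then gives $T_1=T_2$. Adding three things turns your outline into the paper's proof: the determinant-pattern specialization, the sign restoration including absorbing the third sign into $\mathcal T$, and the trace separation of sign pairs.
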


\begin{proof}
This is the determinant-one specialization of Theorem~1.1, together with
the square-root conjugacy statement of Lemma~2.3, the residual-equivalence
criterion of Lemma~2.10, and the transversal statement of Corollary~2.1, in
\cite{LiZhang2026Pythagorean}.  We record the short deduction needed here.
Choose signs so that the three traces are nonnegative and apply that orbit
theorem.  In that theorem, the
three determinant patterns of the canonical pieces are
\[
(1,1,1),\qquad(-1,1,1),\qquad(1,-1,1).
\]
Since \(A,B,C\in\SL_2(\mathbb Z)\), only the first piece can occur.
Thus the trace-normalized triple belongs to an orbit of
\((-R^2,-R,T_0)\) with \(T_0^2=-\Id\).  Restoring the first two signs
produces the two sign parameters in \eqref{eq:Pplus-decomposition}.
Restoring the sign of the third component does not create an additional
sign layer.  Indeed, if \(T_0\in\mathcal T\), then \(-T_0\) is again an
integral square root of \(-\Id\).  Lemma~2.3 of
\cite{LiZhang2026Pythagorean} therefore places \(-T_0\) in the same
integral conjugacy class \(\mathcal T_J\) as \(J\).  Corollary~2.1 of the
same paper then gives a unique \(T_1\in\mathcal T\) and some
\(k\in\{0,1,2\}\) such that
\[
T_1=R^k(-T_0)R^{-k}.
\]
Since \(R^k\) fixes both \(-\epsilon_1R^2\) and
\(-\epsilon_2R\) under conjugation, it follows that
\[
\mathcal O_G(-\epsilon_1R^2,-\epsilon_2R,-T_0)
=
\mathcal O_G(-\epsilon_1R^2,-\epsilon_2R,T_1).
\]
Thus the third sign merely permutes the parameter set \(\mathcal T\).

The union is disjoint.  Different pairs \((\epsilon_1,\epsilon_2)\)
have different ordered trace pairs, because
\(\tr(-\epsilon_1R^2)=\epsilon_1\) and
\(\tr(-\epsilon_2R)=\epsilon_2\).  Fix a sign pair and suppose that two
parameters \(T_1,T_2\in\mathcal T\) determine the same simultaneous-conjugacy
orbit.  The central signs on the first two components do not affect the
condition that a conjugating matrix centralize \(R\).  Hence Lemma~2.10 of
\cite{LiZhang2026Pythagorean} implies that \(T_1\) and \(T_2\) lie in the
same \(\langle R\rangle\)-orbit inside \(\mathcal T_J\).  Corollary~2.1
of the same paper says that each such residual orbit meets \(\mathcal T\)
in exactly one point, so \(T_1=T_2\).  Finally, every representative in
\eqref{eq:Pplus-decomposition} has third component squaring to
\(-\Id\), which proves the last assertion.
\end{proof}

\begin{remark}
\label{rem:companion-input}
The numbering above refers to the version of \cite{LiZhang2026Pythagorean}
used throughout this paper.  Theorem~1.1 is the trace-normalized ordered
orbit decomposition, Lemma~2.3 is the conjugacy classification of integral
square roots of \(-\Id\), Lemma~2.10 identifies equality of the first-family
\(G\)-orbits with the residual \(\langle R\rangle\)-action on
\(\mathcal T_J\), and Corollary~2.1 states that \(\mathcal T\) is a
complete transversal for those residual orbits.  These are the only parts
of the companion classification used in Proposition~\ref{prop:pythagorean-orbit-input}.
\end{remark}

\begin{proposition}
\label{prop:no-4-divides}
If \(4\mid n\), then Equation~\eqref{eq:Fermat} has no solution in
\(\GL_2(\mathbb Z)\).
\end{proposition}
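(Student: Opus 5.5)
Write \(n=4m\) and suppose \((X,Y,Z)\in\mathcal F_n\).  Put \(A=X^{2m}\), \(B=Y^{2m}\), \(C=Z^{2m}\).  Then \(A^2+B^2=C^2\).  Since \(2m\) is even, each of \(A,B,C\) has determinant \((\pm1)^{2m}=1\), so \((A,B,C)\in\mathcal P_+\).  By the last assertion of Proposition~\ref{prop:pythagorean-orbit-input}, \(C^2=-\Id\), that is,
\[
Z^{4m}=-\Id .
\]

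The contradiction comes from Lemma~\ref{lem:finite-orders}.  From \(Z^{4m}=-\Id\) we get \(Z^{8m}=\Id\), so \(Z\) has finite order, and this order lies in \(\{1,2,3,4,6\}\).

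If the order is \(1\), \(2\) or \(4\), it divides \(4m\), so \(Z^{4m}=\Id\ne-\Id\).

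If the order is \(3\) or \(6\), consider the possible eigenvalues of \(Z\).
\begin{enumerate}[label=\textup{(\alph*)}]
\item A primitive cube root of unity raised to the power \(4m\) is again a cube root of unity, and it is never \(-1\).
\item A primitive sixth root of unity \(\zeta\) gives \(\zeta^{4m}=(\zeta^4)^m\), where \(\zeta^4\) is a primitive cube root of unity; this is again never \(-1\).
\end{enumerate}
Equivalently, \(Z^{4m}=-\Id\) would force \(Z^{12m}=-\Id\), while order dividing \(6\) forces \(Z^{12m}=\Id\).  So no order is possible, and \(\mathcal F_n\) is empty.

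The only step needing care is applying Proposition~\ref{prop:pythagorean-orbit-input}.  This requires all three of \(A,B,C\) to lie in \(\SL_2(\mathbb Z)\), which holds because the exponent \(2m\) is even.  No other input from the companion paper is used.

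Even without it, the conclusion \(C^2=-\Id\) is what matters.  Any order-\(4\) matrix \(C\) that is itself a square in \(\GL_2(\mathbb Z)\) would need a square root of order \(8\), which Lemma~\ref{lem:finite-orders} excludes.  So the argument reduces to ruling out an integral matrix of order \(8\).
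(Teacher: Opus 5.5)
Your proposal is correct and follows the paper's proof. In both, you pass to $(X^{2m},Y^{2m},Z^{2m})\in\mathcal P_+$, invoke Proposition~\ref{prop:pythagorean-orbit-input} to get $Z^{4m}=-\Id$, and then contradict Lemma~\ref{lem:finite-orders}. The paper phrases this last step as ``$Z^m$ has order $8$'', whereas you rule out each possible order of $Z$ directly (or via $Z^{12m}$); the two are equivalent.
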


\begin{proof}
Write \(n=4m\) and suppose that
\(X^n+Y^n=Z^n\).  Set
\[
A=X^{2m},\qquad B=Y^{2m},\qquad C=Z^{2m}.
\]
Then \(A,B,C\in\SL_2(\mathbb Z)\) and
\(A^2+B^2=C^2\).  Proposition~\ref{prop:pythagorean-orbit-input}
gives \(C^2=-\Id\).  Since \(C=Z^{2m}\), we obtain
\[
(Z^m)^4=-\Id.
\]
Hence \(Z^m\) has order \(8\), contradicting
Lemma~\ref{lem:finite-orders}.  Therefore no solution exists when
\(4\mid n\).
\end{proof}

\begin{remark}
The displayed decomposition \eqref{eq:Pplus-decomposition}, rather than
only the consequence \(C^2=-\Id\), will be used again in
Subsection~\ref{sec:even-classification}.  At this stage, the distinction between an ordinary parameter union and
a canonical disjoint orbit union becomes essential.
\end{remark}

\begin{lemma}\label{lem:modular-group-presentation}
Let $s=\overline{J}$ and $r=\overline{R}$ be the images of $J,R$ in
$\PSL_2(\mathbb Z)$. Then
$\PSL_2(\mathbb Z)\cong\langle s,r\mid s^2=r^3=1\rangle\cong C_2*C_3$.
\end{lemma}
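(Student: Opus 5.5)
The plan is the classical ping-pong argument for the action of $\SL_2(\mathbb Z)$ on $\mathbb R^2$ (equivalently, on $\mathbb P^1(\mathbb R)$), combined with a generation statement. It splits into two steps: first show that $\overline{J}$ and $\overline{R}$ generate $\PSL_2(\mathbb Z)$, and then show that the only relations among them are $s^2=r^3=1$.

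\emph{Generation.} Put $T=\smat{1&1\\0&1}$ and note that $JR=\smat{-1&1\\0&-1}=-T^{-1}$. Hence $\overline{T}$ lies in $\langle s,r\rangle$. Given $M=\smat{a&b\\c&d}\in\SL_2(\mathbb Z)$, we run the Euclidean algorithm on the first column. Right multiplication by $T^k$ leaves $c$ unchanged and replaces $d$ by $d+kc$. Applying $J$ on the left swaps the rows up to sign. Alternating these moves strictly decreases $\min(|a|,|c|)$ until one entry of the first column is $0$. The resulting matrix is $\pm T^k$, which shows $\langle\overline J,\overline T\rangle=\PSL_2(\mathbb Z)$. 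The relations $s^2=1$ and $r^3=1$ hold because $J^2=-\Id$ and $R^3=\Id$. Consequently the natural map $C_2*C_3\to\PSL_2(\mathbb Z)$ is surjective.

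\emph{Injectivity via ping-pong.} Let $\mathcal P\subset\mathbb R^2\setminus\{0\}$ be the vectors whose two coordinates are both nonzero and have the same sign, and let $\mathcal N$ be those whose two coordinates are both nonzero and have opposite signs. (The exact choice of sets may need adjusting; one may instead use the open quadrants of $\mathbb P^1(\mathbb R)$.) First check that $J$ maps $\mathcal P$ into $\mathcal N$: indeed $J(x,y)=(-y,x)$. Next check that $R$ and $R^2$ map $\mathcal N$ into $\mathcal P$ on projective classes. With $R(x,y)=(-y,x-y)$ and $x>0>y$, we get $-y>0$ and $x-y>0$. With $R^2=\smat{-1&1\\-1&0}$, we get $R^2(x,y)=(y-x,-x)$, and both coordinates are negative. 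Up to sign, which is harmless in $\PSL_2$, both images therefore lie in $\mathcal P$.

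Now take any reduced word $w$ that alternates $s$ with $r^{\pm1}$. If $w$ begins and ends with a power of $r$, apply it to a vector in $\mathcal N$. Each letter moves the vector between $\mathcal P$ and $\mathcal N$, and the last letter lands it in $\mathcal P$, so $w$ cannot act as $\pm\Id$. Words of any other shape are conjugated within $C_2*C_3$ into this form, or are handled by a direct check: the word $s$ alone is nontrivial because $J\neq\pm\Id$, and a word $r^{\pm1}$ alone is nontrivial because $R\neq\pm\Id$. Hence the kernel is trivial.

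I expect the main obstacle to be the bookkeeping in the ping-pong step, since $J$ and $R$ have no strict contraction property on a single pair of sets. The real content is choosing the sets correctly and handling the boundary lines, namely the coordinate axes and the diagonal. If strict inclusion fails on these lines, I would first try replacing the sets by open cones that include the appropriate boundary rays. If that still does not work, I would switch to the standard Serre tree argument: let $\PSL_2(\mathbb Z)$ act on the Bass--Serre tree, with vertex stabilizers $\langle s\rangle$ and $\langle r\rangle$ and trivial edge stabilizer, and apply Serre's theorem directly.
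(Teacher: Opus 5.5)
Your proposal is correct in substance, and it takes a genuinely different route from the paper. The paper gives no hand proof. It cites Serre and lets $\PSL_2(\mathbb Z)$ act on the barycentric subdivision of the dual tree of the Farey tessellation. There $\overline J$ and $\overline R$ generate the stabilizers of the two endpoints of a fundamental edge, the edge stabilizer is trivial, and the quotient is a single edge, so Bass--Serre theory gives $C_2*C_3$ at once.

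You instead split the claim into two parts: generation, by the Euclidean algorithm with $\overline T\in\langle s,r\rangle$ because $JR=-T^{-1}$; and absence of further relations, by ping-pong. Your ping-pong sets already work, so the hedging at the end is unnecessary:
\begin{itemize}
\item With $\mathcal P$ the open first and third quadrants and $\mathcal N$ the open second and fourth, you verified $J\mathcal P\subseteq\mathcal N$, $R\mathcal N\subseteq\mathcal P$ and $R^2\mathcal N\subseteq\mathcal P$, all with strict sign inequalities.
\item Both sets are invariant under $v\mapsto -v$. So the ``up to sign'' caveat is vacuous, and $\pm\Id$ preserves $\mathcal N$.
\item The ping-pong lemma for free products needs only disjointness. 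It needs no contraction and no care on the axes or the diagonal.
\end{itemize}
The reduction of the other word shapes deserves one explicit line. Conjugate by a power $r^{f}$ with $f\not\equiv 0$, chosen so that $f$ does not cancel the outer $r$-exponent. This is possible exactly because $\langle r\rangle$ has three elements.

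Your route is a self-contained, purely computational proof that pins down the specific images $\overline J,\overline R$. That is precisely what the proof of Proposition~\ref{prop:no-6-divides} uses to define $\pi$. The paper's route is shorter and more conceptual, but it leans on cited facts about the Farey tree: that it is a tree, that the stabilizers are as stated, and that the quotient is one edge. Your fallback to the tree argument is exactly the paper's proof.

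There is one concrete slip, in the generation step. Right multiplication by $T^k$ fixes the entire first column, since $MT^k=\smat{a&ka+b\\c&kc+d}$. So alternating it with left multiplication by $J$ never decreases $\min(|a|,|c|)$. There are two easy fixes:
\begin{itemize}
\item Use left multiplication instead, $T^kM=\smat{a+kc&b+kd\\c&d}$, together with $JM=\smat{-c&-d\\a&b}$.
\item Keep right multiplications and run the algorithm on the bottom row $(c,d)$, using $MJ=\smat{b&-a\\d&-c}$.
\end{itemize}
With either fix the reduction ends at $\pm T^k$, and the surjectivity of $C_2*C_3\to\PSL_2(\mathbb Z)$ follows as you say.
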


\begin{proof}
This is the classical structure theorem for the modular group; see, for
example, Serre's standard treatment of group actions on trees
\cite[Chapter~I.4]{SerreTrees}. For the precise form used here, consider
the action of $\PSL_2(\mathbb Z)$ on the barycentric subdivision of the
dual tree of the Farey tessellation. A fundamental edge joins a vertex
with stabilizer generated by $s=\overline J$ to a vertex with stabilizer
generated by $r=\overline R$; these stabilizers are isomorphic to $C_2$
and $C_3$, respectively. The edge stabilizer is trivial and the quotient
graph is a single edge. Bass--Serre theory therefore gives
$\PSL_2(\mathbb Z)\cong C_2*C_3$. We shall use only this presentation and
the natural quotient homomorphism
$C_2*C_3\twoheadrightarrow C_3$.
\end{proof}

We next exclude multiples of $6$.  Here the obstruction comes from the
modular group rather than from the Pythagorean classification.

\begin{proposition}\label{prop:no-6-divides}
If $6\mid n$, then Equation~\eqref{eq:Fermat} has no solution.
\end{proposition}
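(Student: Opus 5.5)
Write $n=6m$ and suppose $X^n+Y^n=Z^n$ with $X,Y,Z\in\GL_2(\mathbb Z)$. The plan is to exploit two facts. First, each power $X^n=(X^{2m})^3$ is the cube of an element of $\SL_2(\mathbb Z)$. Second, the equation forces a certain quotient of two of these powers to be an elliptic element of order $3$. The homomorphism $\PSL_2(\mathbb Z)\cong C_2*C_3\twoheadrightarrow C_3$ of Lemma~\ref{lem:modular-group-presentation} kills all cubes but not elements of order $3$, and this incompatibility gives the contradiction.

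\emph{Step 1: setup.} Put $U=X^{2m}$, $V=Y^{2m}$, $W=Z^{2m}$. These lie in $\SL_2(\mathbb Z)$ because $\det X^{2m}=(\det X)^{2m}=1$. Then $A=U^3$, $B=V^3$, $C=W^3$ lie in $\SL_2(\mathbb Z)$ and satisfy $A+B=C$. Let $\chi\colon\SL_2(\mathbb Z)\to\PSL_2(\mathbb Z)\cong C_2*C_3\to C_3$ be the composite, where the last map sends $s\mapsto1$ and $r\mapsto$ a generator. Since $C_3$ is abelian of exponent $3$, we get $\chi(A)=\chi(U)^3=1$, and likewise $\chi(B)=\chi(C)=1$. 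Hence every product of $A,B,C$ and their inverses lies in $\ker\chi$.

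\emph{Step 2: the trace identity.} For $M,N\in M_2(\mathbb Z)$ we have $\det(M+N)=\det M+\det N+\tr(\operatorname{adj}(M)N)$, and $\operatorname{adj}M=M^{-1}$ when $\det M=1$. Applying this to $C=A+B$ gives
\[
1=\det C=2+\tr(A^{-1}B),
\]
so $\tr(A^{-1}B)=-1$. Thus $T:=A^{-1}B\in\SL_2(\mathbb Z)$ satisfies $T^2+T+\Id=0$. It is therefore a nonscalar torsion matrix of trace $-1$, and by Lemma~\ref{lem:positive-torsion-normal} it is integrally conjugate to $R$. (Only this one identity is needed; the analogous identities $\tr(C^{-1}A)=\tr(C^{-1}B)=1$ are not used.)

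\emph{Step 3: contradiction.} Conjugation in $\GL_2(\mathbb Z)$ does not preserve $\ker\chi$ a priori, since a determinant $-1$ conjugator may act on $C_3$ by inversion. This is harmless: the image of $T$ in $\PSL_2(\mathbb Z)$ is conjugate in $\mathrm{PGL}_2(\mathbb Z)$ to $r$. Conjugation by a determinant $-1$ element induces an automorphism of $C_2*C_3$ that sends the unique conjugacy class of the factor $C_3$ to itself, mapping $r$ to a conjugate of $r^{\pm1}$. Either way $\chi(T)=\chi(r)^{\pm1}\ne1$.

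The step needing the most care is justifying this last point cleanly. I would first try conjugating by $\smat{0&1\\1&0}$, which sends $R$ to $R^{-1}$ up to an $\SL_2$-conjugation, so that $T$ is $\SL_2(\mathbb Z)$-conjugate to $R$ or $R^{2}$; both have nontrivial image under $\chi$. On the other hand, $T=A^{-1}B\in\ker\chi$ by Step 1. This contradiction shows that no solution exists when $6\mid n$.
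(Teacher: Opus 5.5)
Your proposal is correct and is essentially the paper's argument. The determinant identity forces $A^{-1}B$ to have trace $-1$, so it is $\GL_2(\mathbb Z)$-conjugate to $R$. The map $\PSL_2(\mathbb Z)\cong C_2*C_3\to C_3$ then kills $A$ and $B$, whose exponents are divisible by $3$, but not $\overline R$.

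The only difference is how a determinant-$(-1)$ conjugator is handled. The paper conjugates the whole triple so that $A^{-1}B=R$ exactly, which is safe because being an $m$th power of an element of $\SL_2(\mathbb Z)$ survives conjugation. You keep the triple fixed and use $HRH^{-1}=R^{-1}$ with $H=\smat{0&1\\1&0}$. This identity holds exactly, not merely up to $\SL_2$-conjugation, and your version works equally well.
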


\begin{proof}
Write $n=2m$ with $3\mid m$, and set
$A=X^n$, $B=Y^n$, and $C=Z^n$. Since $n$ is even,
$A,B,C\in\SL_2(\mathbb Z)$. Put $U=A^{-1}B$. From $A+B=C$ and
$\det A=\det B=\det C=1$, we obtain
$\det U=1$ and $\det(\Id+U)=1$. Hence
$\tr U=-1$ and $U^2+U+\Id=0$. Thus
$\Id+U=-U^2$, $B=AU$, and $C=A(\Id+U)=-AU^2$.

Let $\omega^2+\omega+1=0$. The relation
$U^2+U+\Id=0$ makes $\mathbb Z^2$ a finitely generated
$\mathbb Z[\omega]$-module. This module is torsion-free: if
$0\ne\alpha\in\mathbb Z[\omega]$ and $\alpha v=0$, then the determinant
of $\alpha(U)$ on $\mathbb Q^2$ equals
$N_{\mathbb Q(\omega)/\mathbb Q}(\alpha)\ne0$, so $v=0$. Its
$\mathbb Z$-rank is $2=[\mathbb Q(\omega):\mathbb Q]$, hence its rank
over $\mathbb Z[\omega]$ is $1$. By
Lemma~\ref{lem:quadratic-euclidean}, $\mathbb Z[\omega]$ is a principal
ideal domain, so the module is free of rank one. Therefore there is
$P\in\GL_2(\mathbb Z)$ such that $P^{-1}UP=R$. After simultaneously
conjugating $A,B,C,U$, we may assume that $U=R$.

By Lemma~\ref{lem:modular-group-presentation}, write
$\PSL_2(\mathbb Z)\cong C_2*C_3=\langle a,r\mid a^2=r^3=1\rangle$.
Bars will denote images in $\PSL_2(\mathbb Z)$, and we put
$g=\overline A$ and $r=\overline R$. Since $n=2m$, we have
$A=(X^2)^m$, $B=(Y^2)^m$, and $C=(Z^2)^m$. As
$X^2,Y^2,Z^2\in\SL_2(\mathbb Z)$, the elements
$\overline A,\overline B,\overline C$ are all $m$th powers in
$\PSL_2(\mathbb Z)$. On the other hand, from $B=AR$ and $C=-AR^2$,
and because $-\Id$ is trivial in $\PSL_2(\mathbb Z)$, we get
$\overline A=g$, $\overline B=gr$, and $\overline C=gr^2$.
Thus $g,gr,gr^2$ are all $m$th powers. Consider the homomorphism
$\pi:\PSL_2(\mathbb Z)\to C_3$ defined by $\pi(a)=0$ and
$\pi(r)=1$, with $C_3$ written additively. If $h=k^m$, then
$\pi(h)=m\pi(k)=0$ because $3\mid m$. Hence
$\pi(g)=\pi(gr)=\pi(gr^2)=0$. But homomorphy also gives
$\pi(gr)-\pi(g)=\pi(r)=1$, a contradiction. Notice that no matrix
addition has been projected to the modular group; only the multiplicative
relations $B=AR$ and $C=-AR^2$, which were deduced from $A+B=C$, are
used.
\end{proof}

It remains to prove that the two divisibility obstructions above are the
only ones.  The required solutions can be written down explicitly.

\begin{proposition}\label{prop:existence-construction}
If $4\nmid n$ and $6\nmid n$, then Equation~\eqref{eq:Fermat} has a
solution in $\GL_2(\mathbb Z)$.
\end{proposition}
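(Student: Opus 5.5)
The plan is to write down explicit solutions, split according to the residue of \(n\). Since \(4\nmid n\) and \(6\nmid n\), either \(n\) is odd, or \(n=2m\) with \(m\) odd and \(3\nmid m\), that is, \(n\equiv2,10\pmod{12}\). All verifications will use only the relations~\eqref{eq:basic-relations} and Lemma~\ref{lem:finite-orders}.

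\emph{Odd \(n\).} I will use the base triple \(\mathbf A\) from~\eqref{eq:four-base-triples}, which works uniformly for every odd \(n\), including multiples of \(3\). Its three components \(\smat{-1&0\\-1&1}\), \(\smat{0&1\\1&0}\), \(\smat{-1&1\\0&1}\) all have determinant \(-1\). By Lemma~\ref{lem:finite-orders} each therefore squares to \(\Id\), and so equals its own \(n\)th power for odd \(n\). A direct addition gives
\[
\smat{-1&0\\-1&1}+\smat{0&1\\1&0}=\smat{-1&1\\0&1}.
\]
Hence \(\mathbf A\in\mathcal F_n\) for every odd \(n\). When \(3\nmid n\), one may alternatively take the commuting triple \((Q^{e},Q^{-e},\Id)\) with \(ne\equiv1\pmod 6\). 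This works because \(Q^{ne}+Q^{-ne}=Q+Q^{-1}=\Id\), but it is not needed.

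\emph{Even \(n=2m\), with \(m\) odd and \(3\nmid m\).} Take \(Z=J\). Then \(Z^n=(J^2)^m=(-\Id)^m=-\Id\), since \(m\) is odd. Next, \(\gcd(n,3)=1\), so there is \(e\in\mathbb Z\) with \(ne\equiv1\pmod3\). Put \(X=R^{e}\) and \(Y=R^{2e}\). Because \(R^3=\Id\), we get \(X^n=R\) and \(Y^n=R^2\). Therefore
\[
X^n+Y^n=R+R^2=-\Id=Z^n,
\]
again by~\eqref{eq:basic-relations}. This triple is, up to signs and labels, the Pythagorean-type representative \((-\epsilon_1R^2,-\epsilon_2R,T)\) appearing in~\eqref{eq:even-main-decomposition}, which serves as a consistency check.

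There is no real obstacle here. The only points needing care are the parity bookkeeping, namely that \(m\) odd forces \(J^{2m}=-\Id\), and the invertibility of \(n\) modulo \(3\). These are exactly the conditions excluded by Propositions~\ref{prop:no-4-divides} and~\ref{prop:no-6-divides}.
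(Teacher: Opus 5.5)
Your construction is essentially the paper's: for odd $n$ you use the same involution triple $\mathbf A$, and for even $n$ your $X=R^{e}$, $Y=R^{2e}$, $Z=J$ are exactly the paper's $Q^{u},Q^{v},J$ with $u=2e$, $v=4e$ (since $R=Q^2$, the condition $nu\equiv2\pmod6$ is just $ne\equiv1\pmod3$). One small fix: Lemma~\ref{lem:finite-orders} assumes finite order, and determinant $-1$ alone does not force $T^2=\Id$ (e.g.\ $\smat{1&1\\1&0}$), so justify $T^2=\Id$ for the components of $\mathbf A$ from their zero trace via Cayley--Hamilton, or by direct multiplication as the paper does.
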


\begin{proof}

If $n$ is odd, take
\[
X=
\begin{pmatrix}
-1&0\\
-1&1
\end{pmatrix},
\quad
Y=
\begin{pmatrix}
0&1\\
1&0
\end{pmatrix},
\quad
Z=
\begin{pmatrix}
-1&1\\
0&1
\end{pmatrix}.
\]
A direct calculation gives $X^2=Y^2=Z^2=\Id$ and $X+Y=Z$.
Since $n$ is odd, $X^n=X$, $Y^n=Y$, and $Z^n=Z$, yielding a
solution.

If $n$ is even and $4\nmid n$, $6\nmid n$, then
$n\equiv2$ or $10\pmod{12}$. Hence $\gcd(n,6)=2$, so there exist
integers $u,v$ such that $nu\equiv2\pmod6$ and
$nv\equiv4\pmod6$. Put $X=Q^u$, $Y=Q^v$, and $Z=J$. Since
$Q^6=\Id$, we have $X^n=Q^2=R$ and $Y^n=Q^4=R^2$. Moreover,
$n/2$ is odd, so $Z^n=(J^2)^{n/2}=-\Id$. The relation
$R+R^2=-\Id$ now gives $X^n+Y^n=Z^n$.
\end{proof}

Propositions~\ref{prop:no-4-divides}, \ref{prop:no-6-divides}, and
\ref{prop:existence-construction} therefore establish the exponent
criterion in Corollary~\ref{thm:main}.

\subsection{Noncommuting torsion triples and integral conjugacy}
\label{sec:finite-orbits}

We now work with the ordered torsion solution set
\[
\mathcal N_{\mathrm{tor}}
:=\{(A,B,C)\in G^3:A+B=C,\ A,B,C\text{ torsion},\ [A,B]\ne0\}.
\]
The auxiliary group $\Gamma$ from the Introduction acts on this set,
and its action commutes with simultaneous conjugation.  We first use
$\Gamma$ to normalize determinant positions and obtain eight convenient
triples.  This first reduction gives an ordinary union of $G$-orbits,
because different normalized triples and different elements of
$\Gamma$ may still determine the same simultaneous-conjugacy orbit.
Afterward we split the coarse classes and choose a genuine transversal
for the ordered $G$-orbits.

\subsubsection{Eight normalized triples and four coarse symmetry classes}

\paragraph{Type $\mathrm A_1$.}
\[
A=
\begin{pmatrix}
-1&0\\
-1&1
\end{pmatrix},
\quad
B=
\begin{pmatrix}
0&1\\
1&0
\end{pmatrix},
\quad
C=
\begin{pmatrix}
-1&1\\
0&1
\end{pmatrix}.
\]
\paragraph{Type $\mathrm A_2$.}
\[
A=
\begin{pmatrix}
-1&1\\
0&1
\end{pmatrix},
\quad
B=
\begin{pmatrix}
1&0\\
1&-1
\end{pmatrix},
\quad
C=
\begin{pmatrix}
0&1\\
1&0
\end{pmatrix}.
\]
Both types satisfy $A^2=B^2=C^2=\Id$.

\paragraph{Type $\mathrm B_1$.}
\[
A=
\begin{pmatrix}
-1&0\\
-1&1
\end{pmatrix},
\quad
B=J,
\quad
C=
\begin{pmatrix}
-1&-1\\
0&1
\end{pmatrix}.
\]
\paragraph{Type $\mathrm B_2$.}
\[
A=
\begin{pmatrix}
-1&1\\
0&1
\end{pmatrix},
\quad
B=J,
\quad
C=
\begin{pmatrix}
-1&0\\
1&1
\end{pmatrix}.
\]
Both types satisfy $A^2=C^2=\Id$ and $B^2=-\Id$.

Put
\[
D=
\begin{pmatrix}
-1&0\\
0&1
\end{pmatrix}.
\]
\paragraph{Type $\mathrm C_3$.}
\[
A=D,
\quad
B=R,
\quad
C=
\begin{pmatrix}
-1&-1\\
1&0
\end{pmatrix}.
\]
This type satisfies $A^2=\Id$ and $B^3=C^3=\Id$.

\paragraph{Type $\mathrm C_{4,1}$.}
\[
A=
\begin{pmatrix}
-1&-1\\
0&1
\end{pmatrix},
\quad
B=J,
\quad
C=
\begin{pmatrix}
-1&-2\\
1&1
\end{pmatrix}.
\]
\paragraph{Type $\mathrm C_{4,2}$.}
\[
A=
\begin{pmatrix}
-1&0\\
1&1
\end{pmatrix},
\quad
B=J,
\quad
C=
\begin{pmatrix}
-1&-1\\
2&1
\end{pmatrix}.
\]
Both types satisfy $A^2=\Id$ and $B^2=C^2=-\Id$.

\paragraph{Type $\mathrm C_6$.}
\[
A=D,
\quad
B=Q,
\quad
C=
\begin{pmatrix}
0&-1\\
1&1
\end{pmatrix}.
\]
This type satisfies $A^2=\Id$ and $B^6=C^6=\Id$.

\begin{proposition}\label{prop:standard-triples-check}
Each of the eight triples above satisfies $A+B=C$. Their order patterns,
in the order displayed, are $(2,2,2)$, $(2,2,2)$, $(2,4,2)$,
$(2,4,2)$, $(2,3,3)$, $(2,4,4)$, $(2,4,4)$, and $(2,6,6)$.
Moreover, every one of the eight triples is noncommuting.
\end{proposition}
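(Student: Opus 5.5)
The proof is a finite direct verification, organized so that the order patterns come from traces and determinants rather than from repeated multiplication. For each of the eight displayed triples we first check $A+B=C$ entrywise; this is a single $2\times2$ addition per type. For instance, in Type $\mathrm C_{4,1}$ we have $\smat{-1&-1\\0&1}+\smat{0&-1\\1&0}=\smat{-1&-2\\1&1}$, and in Type $\mathrm C_6$ we have $D+Q=\smat{0&-1\\1&1}$. The remaining six types are handled identically.

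For the order patterns we use Lemma~\ref{lem:finite-orders} together with the Cayley--Hamilton identity $M^2-(\tr M)M+(\det M)\Id=0$.
\begin{enumerate}[label=\textup{(\arabic*)}]
\item Every matrix with $\det=-1$ and $\tr=0$ satisfies $M^2=\Id$ and is nonscalar, so it has order $2$. This covers all $A$-components, the $B$ and $C$ components of Types $\mathrm A_1$ and $\mathrm A_2$, and the $C$-components of Types $\mathrm B_1$ and $\mathrm B_2$; in each case we only read off the trace and determinant.
\item A matrix with $\det=1$ and $\tr=0$ satisfies $M^2=-\Id$, so it has order $4$. This applies to $J$ and to the $C$-components of Types $\mathrm C_{4,1}$ and $\mathrm C_{4,2}$, which have determinants $-1+2=1$ and $-1+2=1$.
\item A matrix with $\det=1$ and $\tr=-1$ satisfies $M^2+M+\Id=0$, so it has order $3$. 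This applies to $R$ and to the $C$-component of Type $\mathrm C_3$.
\item A matrix with $\det=1$ and $\tr=1$ satisfies $M^2-M+\Id=0$, hence $M^3=-\Id$, so it has order $6$. This applies to $Q$ and to the $C$-component $\smat{0&-1\\1&1}$ of Type $\mathrm C_6$.
\end{enumerate}
Together these give the listed patterns.

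For the noncommuting claim, it suffices to show $AB\ne BA$, because $[A,C]=[A,B]$ and $[B,C]=-[A,B]$, as in the proof of Lemma~\ref{lem:comm-divisibility}. Rather than multiplying out, we compute $\det[A,B]$ via Lemma~\ref{lem:comm-det-formula}. When $\tr A=0$ and $\det A=-1$, the formula reduces to
\[
\det[A,B]=-4\det B+(\tr B)^2-(\tr AB)^2 .
\]
A nonzero value shows $[A,B]\ne0$. For example, in Type $\mathrm C_3$ we get $\tr(DR)=1$, hence $\det[D,R]=-4+1-1=-4\ne0$. If the formula yields $0$ for some type, we fall back on a direct entry comparison of $AB$ and $BA$. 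In Type $\mathrm A_1$, for instance, $AB=\smat{0&-1\\1&-1}$ while $BA=\smat{-1&1\\-1&0}$.

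There is no real obstacle; the only risk is arithmetic slips. To guard against these, every trace and determinant will be recorded once in a small table, and each order and commutator conclusion will be read off from that table.
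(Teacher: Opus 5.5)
Your proposal is correct and is essentially the paper's argument: a finite direct check. You organize it through traces and determinants, using Cayley--Hamilton for the orders and Lemma~\ref{lem:comm-det-formula} for noncommutativity, which gives $\det[A,B]=3,3,-5,-5,-4,-5,-5,-4$ in the displayed order, so your fallback is never needed; the paper instead tabulates the commutators $[A,B]$ explicitly. One harmless slip: $DR=\smat{0&1\\1&-1}$, so $\tr(DR)=-1$ rather than $1$, but only its square enters and $\det[D,R]=-4$ is unaffected.
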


\begin{proof}
The required assertions follow by direct matrix addition,
multiplication, and commutator calculation. The results are recorded in
the following table; the last column gives $[A,B]=AB-BA$. Every entry in
that column is nonzero, so all eight triples are noncommuting.
\[
\begin{array}{c|c|c|c}
\text{Type}&A+B=C&\text{Order pattern}&[A,B]\\
\hline
\mathrm A_1&\checkmark&(2,2,2)&
\begin{pmatrix}1&-2\\2&-1\end{pmatrix}\\[6pt]
\mathrm A_2&\checkmark&(2,2,2)&
\begin{pmatrix}1&-2\\2&-1\end{pmatrix}\\[6pt]
\mathrm B_1&\checkmark&(2,4,2)&
\begin{pmatrix}-1&2\\2&1\end{pmatrix}\\[6pt]
\mathrm B_2&\checkmark&(2,4,2)&
\begin{pmatrix}1&2\\2&-1\end{pmatrix}\\[6pt]
\mathrm C_3&\checkmark&(2,3,3)&
\begin{pmatrix}0&2\\2&0\end{pmatrix}\\[6pt]
\mathrm C_{4,1}&\checkmark&(2,4,4)&
\begin{pmatrix}-1&2\\2&1\end{pmatrix}\\[6pt]
\mathrm C_{4,2}&\checkmark&(2,4,4)&
\begin{pmatrix}1&2\\2&-1\end{pmatrix}\\[6pt]
\mathrm C_6&\checkmark&(2,6,6)&
\begin{pmatrix}0&2\\2&0\end{pmatrix}
\end{array}
\]
For example, in type $\mathrm C_3$,
\[
D+R=
\begin{pmatrix}
-1&-1\\
1&0
\end{pmatrix}=C,
\qquad
D^2=\Id,
\qquad
R^3=C^3=\Id.
\]
The other rows are obtained by the same direct calculation.
\end{proof}

\begin{theorem}\label{thm:finite-orbits}
Let $A,B,C\in G$ be torsion matrices satisfying $A+B=C$ and
$[A,B]\ne0$.  After applying an element of $\Gamma$ and a simultaneous
integral conjugation, the ordered triple $(A,B,C)$ is represented by one
of
\[
\mathrm A_1,\ \mathrm A_2,\ \mathrm B_1,\ \mathrm B_2,
\ \mathrm C_3,\ \mathrm C_{4,1},\ \mathrm C_{4,2},\ \mathrm C_6.
\]
Equivalently, if $\mathscr S_8$ denotes this set of eight displayed
triples, then
\begin{equation}\label{eq:ordinary-eight-union}
\mathcal N_{\mathrm{tor}}
=
\bigcup_{\gamma\in\Gamma}
\bigcup_{\mathbf T\in\mathscr S_8}
\mathcal O_G(\gamma\mathbf T).
\end{equation}
The two unions in \eqref{eq:ordinary-eight-union} are ordinary unions.
Under the coarser equivalence generated jointly by $\Gamma$ and
simultaneous conjugation, the eight normalized triples form exactly four
classes:
\[
[\mathrm A_1]=[\mathrm A_2],\qquad
[\mathrm B_1]=[\mathrm B_2],\qquad
[\mathrm C_3]=[\mathrm C_6],\qquad
[\mathrm C_{4,1}]=[\mathrm C_{4,2}].
\]
These four coarse classes may be represented by
$\mathbf A$, $\mathbf B$, $\mathbf C_3$, and $\mathbf C_4$ from
\eqref{eq:four-base-triples}.
\end{theorem}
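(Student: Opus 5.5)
The plan is to classify torsion triples $(A,B,C)$ with $A+B=C$ and $[A,B]\ne0$ by their determinant patterns and traces, then normalize each case by conjugation. By Lemma~\ref{lem:finite-orders}, each component either has determinant $-1$ and is an involution with trace $0$ (or $\pm\Id$), or lies in $\SL_2(\mathbb Z)$ with trace in $\{0,\pm1,\pm2\}$. Scalar components are excluded. If, say, $A=\pm\Id$, then $B$ and $C=B\pm\Id$ commute with $A$, so $[A,B]=0$; the cases where $B$ or $C$ is scalar are identical, since $[A,C]=[A,B]=-[B,C]$.

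The first step is the determinant pattern. Write $\det(A+B)=\det A+\det B+\tr(\mathrm{adj}(A)B)$. The $\Gamma$-action permutes $(A,B,-C)$ and changes signs; since negation preserves determinants in dimension two, $\Gamma$ permutes the determinant triple $(\det A,\det B,\det C)$. All three determinants equal to $+1$ is impossible. In that case the argument of Proposition~\ref{prop:no-6-divides} gives $B=AU$ with $U$ of order $3$, conjugate to $R$. Enumerating the torsion $A\in\SL_2(\mathbb Z)$ with $AR$ and $-AR^2$ torsion and $[A,R]\ne0$ should lead to a contradiction through trace bounds. I expect the traces to be forced to grow, but I am not certain; this subcase must be checked carefully. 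So at least one determinant is $-1$, and using $\Gamma$ we may put $\det A=-1$, making $A$ a trace-zero involution. We further split by whether two or three of the determinants equal $-1$.

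The second step is conjugacy normalization. A trace-zero involution in $\GL_2(\mathbb Z)$ is conjugate to $D$ or to $\smat{0&1\\1&0}$; this is the classical two-class statement, proved via $\mathbb Z^2$ as a $\mathbb Z[C_2]$-module. I will conjugate $A$ to $\smat{-1&0\\-1&1}$ or to $D$ as convenient, and the stabilizer of $A$ is then a small explicit finite group. Next write $B=\smat{x&y\\z&w}$. Record the constraints: $\det B=\pm1$, $\tr B\in\{0,\pm1\}$ (with $\tr B=0$ forced if $\det B=-1$), and the same for $C=A+B$. With $A$ fixed, $\det C$ and $\tr C$ are linear in the entries of $B$ plus $\det B$. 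Together with $\det B=\pm1$ this becomes a conic or Pell-type condition. The claim I need is that it has solutions only in finitely many orbits under the centralizer of $A$ together with residual conjugations fixing the shape of $B$. Here Lemma~\ref{lem:positive-torsion-normal} lets me fix $B$ itself as $J$, $R$ or $Q$ when $\det B=1$. Then $A$ ranges over involutions $P$ with $\det P=-1$ and $\tr P=0$, subject to $P+B$ being torsion. That is a condition $\det(P+B)=\pm1$ and $\tr(P+B)\in\{0,\pm1\}$ on $P=\smat{a&b\\c&-a}$ with $a^2+bc=1$. Solving this finite system up to conjugation by $C_G(B)=\langle J\rangle$ or $\langle Q\rangle$ yields the eight listed triples, organized by $\tr B$ and $\tr C$.

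The final step is to verify the coarse identifications $[\mathrm A_1]=[\mathrm A_2]$ and so on. Each needs an explicit $\gamma\in\Gamma$ and a conjugator. For example, $\rho_{23}$ applied to type $\mathrm C_3$ gives $(D,-C,-R)$, where $-C$ has order $6$, and this should be conjugate to type $\mathrm C_6$. I will also show that the four classes are distinct using $\Gamma$-invariants, namely the multiset of component orders after forgetting signs: $\{2,2,2\}$, $\{2,4,2\}$, $\{2,3,3\}$ or $\{2,6,6\}$ (equivalent, since $\Gamma$ negates components), and $\{2,4,4\}$. Distinguishing $\{2,4,2\}$ from $\{2,4,4\}$ is immediate, since negation preserves order $4$.

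I expect the main obstacle to be the finite enumeration in the second step. I will need to show that the Diophantine conditions on $P$ have only finitely many centralizer-orbits, namely those listed, and to handle correctly the all-determinant-one exclusion and the case $\det B=-1$ (pattern $\mathrm A$).
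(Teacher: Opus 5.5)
Your treatment of the mixed determinant patterns matches the paper's. You use $\Gamma$ to put a determinant-one component at $B$ and an involution at $A$, normalize $B$ to $R$, $J$, or $Q$, and write $A=\smat{x&y\\z&-x}$ with $x^2+yz=1$ and $\tr(AB)=-\det C$. This is a positive definite conic in every admissible case, and you then reduce modulo $\langle Q\rangle$ or $\langle J\rangle$. The coarse identifications are also fine (for $\mathrm C_3\sim\mathrm C_6$ the conjugator after $\rho_{23}$ is $\smat{1&0\\0&-1}$), as is your $\Gamma$-invariant (component orders with $3$ and $6$ identified).

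The gap is that the other two determinant patterns, which you flag yourself, are not proved, and for the first your expected mechanism is wrong. With all determinants $+1$, your reduction $B=AR$, $C=-AR^2$ is sound, and shorter than the paper's seven-row table over $B\in\{R,J,Q\}$. But nothing can ``grow'': $a=\tr A$, $b=\tr(AR)$ and $c=a+b$ are traces of nonscalar torsion elements of $\SL_2(\mathbb Z)$, so they lie in $\{-1,0,1\}$. Write $A=\smat{x&y\\z&a-x}$; the relation $b=y-z-(a-x)$ turns $\det A=1$ into $x^2+xy+y^2-ax-(a+b)y+1=0$. Its minimum over $\mathbb R^2$ is $1-(a^2+ab+b^2)/3\ge 2/3$, because $a^2+ab+b^2=(a^2+b^2+c^2)/2\le1$. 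That is the contradiction you need.

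For three involutions you have no normalization at all, since no component lies in $\SL_2(\mathbb Z)$. The paper's key step is that $A^2=B^2=C^2=\Id$ forces $AB+BA=-\Id$. So $AB$ has determinant $1$ and trace $-1$ and may be conjugated to $R$. Then $B=AR$ and $B^2=\Id$ give $AR=R^2A$, i.e.\ $A=\smat{x&y\\x+y&-x}$ with $x^2+xy+y^2=1$: six solutions in two $\langle Q\rangle$-orbits ($\mathrm A_1,\mathrm A_2$).

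Your fix-$A$ route can also be completed, but your blanket claim of ``finitely many orbits'' is not automatic here. $C_G(A)$ has order $4$, so you need finitely many solutions outright, and the conic is now indefinite; a genuine Pell equation would give infinitely many orbits. The check succeeds only because the equation degenerates. Write $B=\smat{x&y\\z&-x}$. For $A=D$ one gets $x^2+yz=1=(x-1)^2+yz$, which is impossible. For $A=\smat{0&1\\1&0}$ one gets $z=-1-y$ and $(2x)^2-(2y+1)^2=3$. This factors and leaves only $x=\pm1$, $y\in\{0,-1\}$, which form two orbits under $\{\pm\Id,\pm\smat{0&1\\1&0}\}$.
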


\begin{proof}
If one component were scalar, then the difference of the other two
would be scalar, and the three matrices would commute pairwise, contrary
to the hypothesis. Thus all three components are nonscalar. By
Lemma~\ref{lem:finite-orders}, every component of determinant $-1$ is a
trace-zero involution, while a component of determinant $1$ has trace
$-1,0,$ or $1$, corresponding to order $3,4,$ or $6$, respectively.

We first exclude the case in which all three determinants are positive.
Fix the positive-determinant component $B$ and conjugate it to $R,J,$ or
$Q$ according as its trace is $-1,0,$ or $1$. Write
\[
A=\begin{pmatrix}x&y\\z&a-x\end{pmatrix},
\qquad
a=\tr A\in\{-1,0,1\},
\]
and put $b=\tr B$. Since $c=a+b$ must also lie in
$\{-1,0,1\}$, only the following seven trace combinations need be
considered. The conditions $\det A=1$ and $\det(A+B)=1$ become the
quadratic equations in the table; their left-hand sides are strictly
positive for all integers $x,y$.
\[
\begin{array}{c|c}
(a,b,c)&\text{Resulting equation}\\
\hline
(0,-1,-1)&x^2+xy+y^2-y+1=0\\
(1,-1,0)&x^2+xy-x+y^2-y+1=0\\
(-1,0,-1)&x^2+x+y^2-y+1=0\\
(0,0,0)&x^2+y^2-y+1=0\\
(1,0,1)&x^2-x+y^2-y+1=0\\
(-1,1,0)&x^2+xy+x+y^2+1=0\\
(0,1,1)&x^2+xy+y^2-y+1=0
\end{array}
\]
None of these equations has an integral solution. Indeed, viewed
successively as quadratic equations in $x$, the discriminants of all but
the fourth are
$-(3y^2-4y+4)$, $-(3y^2-2y+3)$, $-(4y^2-4y+3)$,
\[
-(4y^2-4y+3),\quad
-(3y^2-2y+3),\quad
-(3y^2-4y+4).
\]
The quadratic polynomials in parentheses all have negative
discriminant and are therefore strictly positive for every real $y$.
Thus these six quadratic equations in $x$ have no real root. The
left-hand side of the fourth equation is
$x^2+y^2-y+1=x^2+y(y-1)+1\ge1$ for integral $y$, since
$y(y-1)\ge0$. Hence the fourth equation also has no integral solution.

We now classify the triples according to the number of components with
negative determinant.

\medskip
\noindent\textbf{Three negative determinants.}
Here $A,B,C$ are involutions. From $C=A+B$ and $C^2=\Id$ we obtain
$AB+BA=-\Id$. Put $S=AB$. Then $\det S=1$ and $\tr S=-1$, so $S$ has
order $3$. After conjugation we may assume $S=R$. Since $B=AR$, the
condition $B^2=\Id$ is equivalent to $AR=R^2A$. Write
\[
A=\begin{pmatrix}x&y\\z&-x\end{pmatrix}.
\]
Direct multiplication gives
\[
AR-R^2A=
\begin{pmatrix}
x+y-z&0\\
0&x+y-z
\end{pmatrix},
\]
Thus $AR=R^2A$ is equivalent to $z=x+y$. The condition
$\det A=-x^2-yz=-1$ becomes $x^2+xy+y^2=1$. Multiplying by $4$ gives
$(2x+y)^2+3y^2=4$, so $|y|\le1$. Substitution of $y=-1,0,1$ yields
exactly the six pairs
$(x,y)=(-1,0),(1,0),(-1,1),(0,1),(0,-1),(1,-1)$, and hence exactly the
following six matrices:
\[
\begin{pmatrix}-1&0\\-1&1\end{pmatrix},
\quad
\begin{pmatrix}-1&1\\0&1\end{pmatrix},
\quad
\begin{pmatrix}0&-1\\-1&0\end{pmatrix},
\]
\[
\begin{pmatrix}0&1\\1&0\end{pmatrix},
\quad
\begin{pmatrix}1&-1\\0&-1\end{pmatrix},
\quad
\begin{pmatrix}1&0\\1&-1\end{pmatrix}.
\]
Under conjugation by
$C_{\GL_2(\mathbb Z)}(R)=\langle Q\rangle$, the six solutions split into
the following two orbits:
\[
\left\{
\begin{pmatrix}-1&0\\-1&1\end{pmatrix},
\begin{pmatrix}0&1\\1&0\end{pmatrix},
\begin{pmatrix}1&-1\\0&-1\end{pmatrix}
\right\},
\]
\[
\left\{
\begin{pmatrix}-1&1\\0&1\end{pmatrix},
\begin{pmatrix}0&-1\\-1&0\end{pmatrix},
\begin{pmatrix}1&0\\1&-1\end{pmatrix}
\right\}.
\]
This follows directly by computing $Q^jAQ^{-j}$ for
$j=0,\ldots,5$. The corresponding triples are precisely
$\mathrm A_1,\mathrm A_2$.

\medskip
\noindent\textbf{Two negative determinants.}
After applying an element of $\Gamma$, assume that
$\det A=\det C=-1$ and $\det B=1$. The trace relation forces
$\tr B=0$, so after conjugation we may take $B=J$. Write
\[
A=\begin{pmatrix}x&y\\z&-x\end{pmatrix}.
\]
The equations $\det A=-1$ and $\det(A+J)=-1$ give
$y-z=1$ and $x^2+yz=1$. Thus $z=y-1$ and
$x^2+y(y-1)=1$. Since $y(y-1)\ge0$ for every integer $y$, we have
$x^2\le1$. If $x=0$, then $y(y-1)=1$, whose discriminant is $5$, so
there is no integral solution. If $x=\pm1$, then $y(y-1)=0$, and hence
$y=0$ or $1$. Therefore
$(x,y,z)\in\{(-1,0,-1),(-1,1,0),(1,0,-1),(1,1,0)\}$. Under conjugation
by $C_{\GL_2(\mathbb Z)}(J)=\langle J\rangle$, these four solutions split
as follows:
\[
\left\{
\begin{pmatrix}-1&0\\-1&1\end{pmatrix},
\begin{pmatrix}1&1\\0&-1\end{pmatrix}
\right\},
\]
\[
\left\{
\begin{pmatrix}-1&1\\0&1\end{pmatrix},
\begin{pmatrix}1&0\\-1&-1\end{pmatrix}
\right\}.
\]
Thus representatives may be chosen as
\[
\begin{pmatrix}-1&0\\-1&1\end{pmatrix},
\qquad
\begin{pmatrix}-1&1\\0&1\end{pmatrix}.
\]
The resulting triples are exactly $\mathrm B_1$ and $\mathrm B_2$.

\medskip
\noindent\textbf{One negative determinant.}
We may assume that $\det A=-1$ and $\det B=\det C=1$. Then
$\tr A=0$ and $\tr B=\tr C\in\{-1,0,1\}$. Fix successively
$B=R,J,Q$, and again write
\[
A=\begin{pmatrix}x&y\\z&-x\end{pmatrix}.
\]
The conditions $\det A=-1$ and $\det(A+B)=1$ are equivalent to
$x^2+yz=1$ and $\tr(AB)=-1$. For $B=R$ or $B=Q$, a direct calculation
gives $\tr(AB)=x+y-z$, so $z=x+y+1$. Substitution into $x^2+yz=1$
yields $x^2+xy+y^2+y-1=0$. Viewed as a quadratic equation in $x$, its
discriminant is $D_y=4-4y-3y^2$. The condition $D_y\ge0$ gives
$y\in\{-2,-1,0\}$. For $y=-1$, $D_y=5$ is not a square; for $y=-2$,
$D_y=0$ and $x=1$; for $y=0$, $D_y=4$ and $x=\pm1$. Thus
$(x,y,z)=(-1,0,0),(1,-2,0),(1,0,2)$.

For $B=J$, we have $\tr(AJ)=y-z$, so $z=y+1$ and
$x^2+y(y+1)=1$. Since $y(y+1)\ge0$ for integral $y$, we have
$x^2\le1$. If $x=0$, then $y(y+1)=1$ has no integral solution. If
$x=\pm1$, then $y(y+1)=0$, so $y=-1$ or $0$. Hence
$(x,y,z)=(-1,-1,0),(-1,0,1),(1,-1,0),(1,0,1)$. In summary, the complete
sets of integral solutions in the three cases are
\[
\begin{array}{c|c}
B& (x,y,z)\\
\hline
R&(-1,0,0),\ (1,-2,0),\ (1,0,2)\\
J&(-1,-1,0),\ (-1,0,1),\ (1,-1,0),\ (1,0,1)\\
Q&(-1,0,0),\ (1,-2,0),\ (1,0,2)
\end{array}.
\]
We now express the centralizer actions as explicit coordinate
transformations. For
\[
A(x,y,z)=
\begin{pmatrix}x&y\\z&-x\end{pmatrix},
\]
a direct computation gives
\[
Q^{-1}A(x,y,z)Q=A(-x+z,-z,-2x-y+z),\qquad
J^{-1}A(x,y,z)J=A(-x,-z,-y).
\]
Since
\[
C_{\GL_2(\mathbb Z)}(R)
=C_{\GL_2(\mathbb Z)}(Q)=\langle Q\rangle,
\qquad
C_{\GL_2(\mathbb Z)}(J)=\langle J\rangle,
\]
the first transformation cycles
$(-1,0,0)\mapsto(1,0,2)\mapsto(1,-2,0)\mapsto(-1,0,0)$. Thus the three
solutions form a single orbit for each of $B=R$ and $B=Q$. The second
transformation gives
\[
(-1,-1,0)\longleftrightarrow(1,0,1),\qquad
(-1,0,1)\longleftrightarrow(1,-1,0).
\]
Therefore, for $B=J$ there are exactly two orbits:
\[
\{(-1,-1,0),(1,0,1)\},\qquad
\{(-1,0,1),(1,-1,0)\}.
\]
The numbers of orbits in the three cases are $1,2,1$. Their
representatives are
\[
\mathrm C_3,\qquad \mathrm C_{4,1},\qquad
\mathrm C_{4,2},\qquad \mathrm C_6.
\]
This produces eight representatives that keep the negative-determinant
position and the normalized positive-determinant component fixed. Under
the coarser equivalence generated by $\Gamma$ and simultaneous
conjugation, however, these representatives are not all distinct.
Indeed, let
\[
P_A=\begin{pmatrix}-1&1\\-1&0\end{pmatrix},\qquad
P_B=\begin{pmatrix}0&-1\\-1&0\end{pmatrix},\qquad
P_3=\begin{pmatrix}1&0\\0&-1\end{pmatrix}.
\]
Let $\mathcal T_{\mathrm A_1},\ldots,\mathcal T_{\mathrm C_6}$
denote the eight explicit matrix triples above, and let
$P\mathcal T P^{-1}$ mean simultaneous $P$-conjugation of all three
components. Direct calculation gives
\[
\begin{aligned}
\mathcal T_{\mathrm A_2}
&=P_A(-\mathcal T_{\mathrm A_1})P_A^{-1},&
\mathcal T_{\mathrm B_2}
&=P_B(-\mathcal T_{\mathrm B_1})P_B^{-1},\\
\mathcal T_{\mathrm C_{4,2}}
&=P_B(-\mathcal T_{\mathrm C_{4,1}})P_B^{-1}.
\end{aligned}
\]
Starting from $\mathcal T_{\mathrm C_3}=(A,B,C)$, first apply
$\rho_{23}$, which replaces $(A,B,C)$ by $(A,-C,-B)$, and then
conjugate all three components by $P_3$. The result is exactly
$\mathcal T_{\mathrm C_6}$. Hence the eight standard representatives
merge into the four coarse classes stated in the theorem.

Finally, these four coarse classes are pairwise distinct. Type $\mathrm A_1$ has
three negative-determinant components and type $\mathrm B_1$ has two,
whereas the remaining two types have one each. Among the latter,
$\mathrm C_3$ has two positive-determinant components of order $3$ or
$6$, while the two positive-determinant components of
$\mathrm C_{4,1}$ both have order $4$. These properties are invariant
under the allowed operations.
\end{proof}

\subsubsection{Canonical ordered simultaneous-conjugacy orbits}

The preceding theorem deliberately uses the coarse \(\Gamma\)-symmetry
to normalize positions.  We now undo that quotient and select one
representative from every simultaneous-conjugacy orbit of the ordered
torsion equation.  Define
\begin{equation}\label{eq:ordered-torsion-transversal}
\begin{aligned}
\mathscr S_{\mathrm{ord}}
:={}&
\{\epsilon\mathbf A:\epsilon\in\{\pm1\}\}\\
&\cup
\{\epsilon\rho\mathbf B:
\epsilon\in\{\pm1\},\
\rho\in\{1,\rho_{12},\rho_{23}\}\}\\
&\cup
\{\epsilon\rho\mathbf C_4:
\epsilon\in\{\pm1\},\
\rho\in\{1,\rho_{12},\rho_{13}\}\}\\
&\cup
\{\rho_\pi\mathbf C_3:\pi\in S_3\}.
\end{aligned}
\end{equation}

\begin{proposition}
\label{prop:ordered-torsion-orbits}
The set \(\mathscr S_{\mathrm{ord}}\) contains exactly \(20\) triples,
no two of which are simultaneously conjugate in \(G\), and
\begin{equation}\label{eq:ordered-torsion-decomposition}
\mathcal N_{\mathrm{tor}}
=
\mathop{\bigsqcup}\limits_{\mathbf T\in
\mathscr S_{\mathrm{ord}}}
\mathcal O_G(\mathbf T).
\end{equation}
More precisely, the coarse classes represented by
\(\mathbf A,\mathbf B,\mathbf C_3,\mathbf C_4\) split into
\(2,6,6,6\) ordered \(G\)-orbits, respectively.
\end{proposition}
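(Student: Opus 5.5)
The plan has two halves: coverage (every element of \(\mathcal N_{\mathrm{tor}}\) is conjugate to some element of \(\mathscr S_{\mathrm{ord}}\)) and separation (the displayed triples are pairwise non-conjugate and there are exactly \(20\) of them). Disjointness of the union in \eqref{eq:ordered-torsion-decomposition} is automatic once separation is proved, since orbits partition.

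\textbf{Coverage.} By Theorem~\ref{thm:finite-orbits}, every element of \(\mathcal N_{\mathrm{tor}}\) lies in \(\mathcal O_G(\gamma\mathbf T)\) for some \(\gamma\in\Gamma\) and some \(\mathbf T\) among the four coarse representatives \(\mathbf A,\mathbf B,\mathbf C_3,\mathbf C_4\). The group \(\Gamma\cong S_3\times C_2\) has \(12\) elements, so it suffices, for each coarse representative \(\mathbf T\), to show that every \(\gamma\mathbf T\) is conjugate to one of the listed elements \(\epsilon\rho\mathbf T\).

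\textbf{Orbit count via stabilizers.} I would compute, for each \(\mathbf T\), the subgroup
\[
\operatorname{Stab}(\mathbf T)=\{\gamma\in\Gamma:\gamma\mathbf T\in\mathcal O_G(\mathbf T)\}.
\]
Because the \(\Gamma\)-action commutes with conjugation, \(\Gamma\) acts on the set of \(G\)-orbits inside the coarse class. The number of ordered orbits in the class of \(\mathbf T\) is therefore \(12/|\operatorname{Stab}(\mathbf T)|\), and a transversal is given by coset representatives. The stabilizer orders I expect are \(6,2,2,2\) for \(\mathbf A,\mathbf B,\mathbf C_3,\mathbf C_4\), giving orbit counts \(2,6,6,6\).

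\textbf{Lower bounds on the stabilizers by explicit conjugators.}
\begin{itemize}
\item For \(\mathbf A\), all three components are involutions and \(u_1+u_2+u_3=0\) with \(u_3=-Z\). From the three-negative-determinant analysis, \(\langle Q\rangle\) permutes the solutions cyclically. I would exhibit explicit conjugators realizing every \(\rho_\pi\), and check that \(\iota\) is not realized.
\item For \(\mathbf B\), I would find \(\rho_{13}\) in the stabilizer; it swaps the two determinant-\((-1)\) slots.
\item For \(\mathbf C_4\), I would find \(\rho_{23}\) in the stabilizer; it swaps the two order-\(4\) slots, up to signs.
\item For \(\mathbf C_3\), I expect \(\iota\) to be realized only in combination with a transposition. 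The element \(\iota\rho\) that the proof of Theorem~\ref{thm:finite-orbits} uses to send \(\mathrm C_3\) to \(\mathrm C_6\) is the candidate to check: \(\iota\rho_{23}\) would be a stabilizer element if \(\mathrm C_6\) itself is a \(\rho\)-image.
\end{itemize}

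\textbf{Upper bounds on the stabilizers by invariants.} These are what prove the listed representatives are inequivalent.
\begin{itemize}
\item The ordered determinant pattern \((\det X,\det Y,\det Z)\) pins down the permutation part. For \(\mathbf B\) and \(\mathbf C_4\) the negative-determinant slots have different positions, and for \(\mathbf C_3\) the position of the order-\(2\) component does the same.
\item The ordered trace vector of the positive-determinant components is also a conjugacy invariant. Since \(\tr(-M)=-\tr M\), it separates \(\epsilon=\pm1\) for \(\mathbf C_3\), where orders \(3\) and \(6\) are distinguished.
\item The remaining case is \(\mathbf B\) and \(\mathbf C_4\) with \(\epsilon=-1\), where traces vanish (orders \(2,4\)). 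Here I would use a finer invariant: the integral conjugacy class of a pair. Fix the order-\(4\) component as \(J\), so the residual freedom is \(\langle J\rangle\). Then compare the remaining involution with the explicit two-element orbits found in the proof of Theorem~\ref{thm:finite-orbits}: \(-\mathbf B_1\) corresponds to the \(\mathrm B_2\)-type orbit, not the \(\mathrm B_1\) orbit.
\item For \(\mathbf A\), \(\iota\) is excluded in the same way: \(-\mathrm A_1\) is conjugate to \(\mathrm A_2\), which lies in the other \(\langle Q\rangle\)-orbit of involutions anticommuting-type with \(R\). Conjugacy would force the product \(AB\) to be conjugate to \(R\) with \(A\) staying in its orbit.
\end{itemize}

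\textbf{Main obstacle.} The hard part is this last separation for the sign flips with zero traces. Simple invariants fail there, so I would rely on the centralizer computations \(C_G(J)=\langle J\rangle\) and \(C_G(R)=\langle Q\rangle\) from Lemma~\ref{lem:positive-torsion-normal} to reduce each equivalence question to a finite check.
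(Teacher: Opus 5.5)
Your proposal is correct. Its overall logic matches the paper's: some signed permutations are realized by explicit conjugators, and the rest are ruled out by invariants. The orbit--stabilizer bookkeeping for the induced $\Gamma$-action on $G$-orbits is a tidier way to organize the same count. The listed pieces of $\mathscr S_{\mathrm{ord}}$ are indeed left-coset transversals of your stabilizers $\{\rho_\pi\}$, $\{1,\rho_{13}\}$, $\{1,\iota\rho_{23}\}$, $\{1,\rho_{23}\}$.

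The genuine difference is how you separate $\epsilon=1$ from $\epsilon=-1$ in the zero-trace cases.

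\textbf{The paper's route.} It solves the linear intertwining equations $PT_i=-T_iP$ over $\mathbb Q$ and finds a one-dimensional solution space. It then notes that $\det P=3\lambda^2$ or $-5\lambda^2$, which is never $\pm1$.

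\textbf{Your route.} You trap the conjugator in a known centralizer.
\begin{itemize}
\item For $\mathbf A$, any $P$ carrying $\mathbf A$ to $-\mathbf A$ fixes the common product $XY=(-X)(-Y)=R$. Hence $P\in\langle Q\rangle$, and $\langle Q\rangle$ cannot move $X$ into the other three-element $\langle Q\rangle$-orbit, which is the one containing $-X$.
\item For $\mathbf B$ and $\mathbf C_4$, first conjugate $-J$ back to $J$. The composite conjugator then lies in $C_G(J)=\langle J\rangle$. The resulting involution lands in the other two-element $\langle J\rangle$-orbit computed in the proof of Theorem~\ref{thm:finite-orbits}.
\end{itemize}

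\textbf{Comparison.} Your route reuses data already computed, and it explains structurally why $\iota$ exchanges the normalized pairs $\mathrm A_1/\mathrm A_2$, $\mathrm B_1/\mathrm B_2$, $\mathrm C_{4,1}/\mathrm C_{4,2}$. The paper's determinant computation is self-contained and needs no normalization, but it is a fresh calculation for each class.

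\textbf{Points to fix.}
\begin{itemize}
\item The map from $\mathrm C_3$ to $\mathrm C_6$ in the proof of Theorem~\ref{thm:finite-orbits} is $\rho_{23}$ followed by $P_3$-conjugation, with no sign change. It only gives $\mathrm C_6\sim\rho_{23}\mathbf C_3$. To put $\iota\rho_{23}$ in the stabilizer you still need an explicit identity such as $J(-\mathbf C_3)J^{-1}=\rho_{23}\mathbf C_3$, which is what the paper uses. Without it, your invariants alone would give a trivial stabilizer and twelve orbits.
\item For the transpositions acting on $\mathbf A$, the conjugators cannot lie in $\langle Q\rangle$, because a transposition replaces $XY=R$ by $R^{-1}$. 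The components themselves work: $Z\mathbf AZ^{-1}=\rho_{12}\mathbf A$ and $X\mathbf AX^{-1}=\rho_{23}\mathbf A$.
\item The total $2+6+6+6$, and the non-conjugacy of representatives from different coarse classes, rest on the last paragraph of the proof of Theorem~\ref{thm:finite-orbits}. That paragraph distinguishes the classes by the number of negative determinants and the orders of the positive-determinant components. You use it only implicitly and should cite it.
\end{itemize}
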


\begin{proof}
Coverage follows from Theorem~\ref{thm:finite-orbits} and the explicit
coarse-class identifications at the end of its proof.  It remains to
identify the exact splitting inside each coarse class.

For the \(\mathbf A\)-class, direct calculation gives
\[
H_{12}\mathbf A H_{12}^{-1}=\rho_{12}\mathbf A,
\qquad
H_{23}\mathbf A H_{23}^{-1}=\rho_{23}\mathbf A,
\]
where
\[
H_{12}=\begin{pmatrix}-1&1\\0&1\end{pmatrix},
\qquad
H_{23}=\begin{pmatrix}-1&0\\-1&1\end{pmatrix}.
\]
Since the transpositions \((12)\) and \((23)\) generate \(S_3\), all
six triples \(\rho_\pi\mathbf A\) lie in
\(\mathcal O_G(\mathbf A)\), and all six
\(-\rho_\pi\mathbf A\) lie in \(\mathcal O_G(-\mathbf A)\).
These two orbits are distinct.  Indeed, solving the simultaneous
intertwining equations
\(P A_i=(-A_i)P\) for the three components of \(\mathbf A\) gives
\[
P=\lambda
\begin{pmatrix}-1&2\\-2&1\end{pmatrix},
\qquad \lambda\in\mathbb Q.
\]
Thus \(\det P=3\lambda^2\), which cannot equal \(\pm1\) for rational
\(\lambda\).  Hence the \(\mathbf A\)-class splits into exactly two
ordered orbits.

In the \(\mathbf B\)-class, the unique order-four component may occupy
any one of the three ordered positions.  These possibilities have
different ordered determinant patterns and hence cannot be conjugate.
For a fixed position, the remaining transposition merely exchanges the
two involutions.  At the base position one has
\[
J\mathbf B J^{-1}=\rho_{13}\mathbf B,
\]
and conjugating this identity by the position-changing operations gives
the corresponding statement in the other two positions.  Thus there is
one orbit for each position and each overall sign.  The two signs do not
merge: the simultaneous intertwining equations between \(\mathbf B\)
and \(-\mathbf B\) give
\[
P=\lambda
\begin{pmatrix}-1&2\\2&1\end{pmatrix},
\qquad \det P=-5\lambda^2,
\]
which again cannot be \(\pm1\) for \(\lambda\in\mathbb Q\).  Therefore
the \(\mathbf B\)-class splits into \(3\cdot2=6\) ordered orbits,
represented by the second line of
\eqref{eq:ordered-torsion-transversal}.

The same argument applies to the \(\mathbf C_4\)-class.  Here the
unique involution may occupy any of the three positions, and the two
order-four components may be exchanged without changing the orbit,
because
\[
H_4\mathbf C_4H_4^{-1}=\rho_{23}\mathbf C_4,
\qquad
H_4=\begin{pmatrix}-1&-1\\0&1\end{pmatrix}.
\]
The intertwiner between \(\mathbf C_4\) and \(-\mathbf C_4\) is again
of the form
\[
\lambda
\begin{pmatrix}-1&2\\2&1\end{pmatrix},
\]
with determinant \(-5\lambda^2\).  Hence the two signs are not
integrally conjugate, and this coarse class also splits into six ordered
orbits.

Finally, the \(\mathbf C_3\)-class contributes exactly the six triples
\(\rho_\pi\mathbf C_3\), \(\pi\in S_3\).  Overall sign introduces no
new orbit, since
\[
J(-\mathbf C_3)J^{-1}=\rho_{23}\mathbf C_3.
\]
The six displayed triples are pairwise nonconjugate because their
ordered determinant and component-order patterns are
\[
\begin{array}{c|c}
\text{determinant pattern}&\text{order pattern}\\
\hline
(-1,1,1)&(2,3,3)\\
(-1,1,1)&(2,6,6)\\
(1,-1,1)&(3,2,3)\\
(1,-1,1)&(6,2,6)\\
(1,1,-1)&(3,6,2)\\
(1,1,-1)&(6,3,2).
\end{array}
\]
Both determinant and order are invariant under simultaneous conjugation.
Thus the \(\mathbf C_3\)-class splits into six ordered orbits.

The four coarse classes cannot meet one another: they have, respectively,
three, two, one, and one negative-determinant components, and in the last
two cases the positive-determinant components have orders in
\(\{3,6\}\) and \(\{4\}\), respectively.  The total number of ordered
orbits is therefore \(2+6+6+6=20\), and
\eqref{eq:ordered-torsion-decomposition} is a disjoint union.
\end{proof}

\subsection{Classification for odd exponents}
\label{sec:odd-classification}

\subsubsection{Commuting solutions}

\begin{theorem}\label{thm:commuting-solutions}
Let $n\ge3$ be odd.  If $3\mid n$, then
$\mathcal F_n^{\mathrm{com}}=\varnothing$.  If $3\nmid n$, let
$e\in\mathbb Z/6\mathbb Z$ satisfy $ne\equiv1\pmod6$.  Then
\begin{equation}\label{eq:commuting-orbit-decomposition}
\mathcal F_n^{\mathrm{com}}
=
\mathop{\bigsqcup}\limits_{s\in\mathbb Z/6\mathbb Z}
\mathcal O_G(Q^{s+e},Q^{s-e},Q^s).
\end{equation}
In particular, the ordered commuting solution set consists of exactly
six simultaneous-conjugacy orbits.
\end{theorem}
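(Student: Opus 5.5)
Let $(X,Y,Z)\in\mathcal F_n^{\mathrm{com}}$ and put $A=X^n$, $B=Y^n$, $C=Z^n$, so that $A+B=C$ and all six matrices commute. The plan is to reduce first to a torsion relation in a single Eisenstein centralizer, then recover the roots, and finally count orbits.

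\textbf{Step 1 (some power is scalar, or all three commute in one algebra).} If $A$, $B$, $C$ are all nonscalar, then $\mathbb Q[A]=\mathbb Q[B]=\mathbb Q[C]$ by Lemma~\ref{lem:nonscalar-centralizer}. The relation $A+B=C$ then becomes an equation $\alpha+\beta=\gamma$ of units in an order of a quadratic field $K$; for $n\ge5$ this needs to be shown impossible. I would not use field arithmetic here but the growth lemmas. Since the triple commutes, $\det[A,B]=0$ carries no information, so Lemma~\ref{lem:comm-divisibility} is useless. Instead, use the trace estimates on eigenvalues directly: on each of the two eigenlines the eigenvalues satisfy $\lambda^n+\mu^n=\nu^n$. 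In the real quadratic or split case, an argument like that of Lemma~\ref{lem:mixed-delta-nonzero} applies, using the determinant signs together with $\det(A+B)=\det C$; this forces $(\mu/\lambda)^n$ to be a root of $T^2\pm T\pm1$ or of $T^2+T+1$. The golden cases are excluded by Lemma~\ref{lem:golden-root}. The case $T^2+T+1$ forces $B=A\,U$ with $U$ of order $3$, so the ratios are torsion. In the imaginary case all of $X,Y,Z$ are torsion. Either way one lands in a torsion situation, which I expect to be the main obstacle, especially handling $n=3$ uniformly without the $n\ge5$ growth bounds. If instead one of $A,B,C$ is scalar, the second half of the proof of Lemma~\ref{lem:power-commuting-lift} already shows that $3\nmid n$ is needed. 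It also gives the explicit shape: up to $\Gamma$, one component is $\pm\Id$ and the other two are generated by an order-$6$ element.

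\textbf{Step 2 (torsion normal form).} With $A,B,C$ torsion and commuting, the determinant relation $\det(A+B)=\det C$ together with Lemma~\ref{lem:finite-orders} rules out the Gaussian case: in $\mathbb Z[i]^\times$, no relation $u+v=w$ among units holds. It also rules out the determinant $-1$ involutions, which commute only with $\pm$ themselves and diagonal-type elements, where $u+v=w$ with $u,v,w\in\{\pm1\}$ fails. What remains is the Eisenstein unit relation $\zeta^a+\zeta^b=\zeta^c$ in $\mathbb Z[\omega]^\times=\langle\zeta\rangle$, $\zeta=-\omega^2$, whose solutions are exactly $(a,b)=(c+1,c-1)$ or $(c-1,c+1)$. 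By Lemma~\ref{lem:positive-torsion-normal} the common centralizer can be conjugated to $\langle Q\rangle$, so $(A,B,C)=(Q^{c\pm1},Q^{c\mp1},Q^c)$.

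\textbf{Step 3 (roots and the case $3\mid n$).} If $3\mid n$, then $n$th powers of torsion elements have order $1,2$ or $4$. Hence $C=Q^c$ would have to be $\pm\Id$, and then $A$ and $B$ have order $3$ or $6$, a contradiction; so $\mathcal F_n^{\mathrm{com}}=\varnothing$. If $3\nmid n$, Lemma~\ref{lem:torsion-root-centralizer} and Lemma~\ref{lem:odd-scalar-power} give $X=A^e$, $Y=B^e$, $Z=C^e$. Writing $s=ce$ and using $e^2\equiv1\pmod 6$, this yields $(Q^{s\pm e},Q^{s\mp e},Q^s)$. The two sign choices are related by conjugation with $D$, which inverts $Q$, together with $s\mapsto -s$, giving the single family in \eqref{eq:commuting-orbit-decomposition}.

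\textbf{Step 4 (disjointness).} Conjugation preserves $\tr Z=\tr Q^s$, which distinguishes $s$ only up to $\pm$. So I would show directly that $(Q^{s+e},Q^{s-e},Q^s)$ and $(Q^{-s+e},Q^{-s-e},Q^{-s})$ are not conjugate for $s\ne -s$. Any conjugator must normalize $\langle Q\rangle$. Elements of $\langle Q\rangle$ fix the triple, while elements of $D\langle Q\rangle$ invert it, sending it to $(Q^{-s-e},Q^{-s+e},Q^{-s})$. This has the wrong order in its first two slots unless $2e\equiv0$, which is impossible since $e=\pm1$. Hence the six orbits are distinct.
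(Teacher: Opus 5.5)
Your plan takes a different route from the paper and has a genuine gap at $n=3$. The eigenvalue argument you sketch for the all-nonscalar branch is field arithmetic and works for every odd $n\ge3$. For the branch in which one of $X^n,Y^n,Z^n$ is scalar, however, you invoke the second half of the proof of Lemma~\ref{lem:power-commuting-lift}. That argument rests on Lemmas~\ref{lem:growth-minus} and~\ref{lem:growth-plus} and is valid only for $n\ge5$. You flag this as the main obstacle but do not resolve it, and it is not a side case. For a commuting solution $(Q^{s+e},Q^{s-e},Q^s)$ the powers are $Q^{ns+1},Q^{ns-1},Q^{ns}$, three consecutive exponents modulo $6$, so exactly one of them is $\pm\Id$. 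Every commuting solution therefore lives in the branch you delegate. For $n=3$ the assertion $\mathcal F_3^{\mathrm{com}}=\varnothing$, which Theorem~\ref{thm:cubic-classification} relies on, is consequently not proved.

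The missing idea is the paper's normalization $S=XZ^{-1}$, $T=YZ^{-1}$. Commutativity gives $S^n+T^n=\Id$, so $U=S^n$ and $V=\Id-U$ are both invertible, which forces $U$ to be nonscalar, and $\tr U=1+\det U-\det V$. Of the four resulting characteristic polynomials, one is $x^2-x+1$. The other three have eigenvalue sets $\{\varphi^2,\varphi^{-2}\}$, $\{\varphi^{-1},-\varphi\}$, and $\{\varphi,-\varphi^{-1}\}$. But the eigenvalues of $S\in\mathbb Q[U]$ are units $\pm\varphi^k$, which gives $nk\in\{\pm1,\pm2\}$, impossible for $n\ge3$. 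This handles all odd $n\ge3$ at once, with no scalar/nonscalar split and no growth bounds. For $n=3$ alone, you could instead close your scalar branch with the parity argument from the proof of Theorem~\ref{thm:cubic-classification}: $\tr(M^3)=t^3-3\varepsilon t$ is always even, while a scalar cube would force $\tr(Y^3)$ to be odd.

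Two further points need repair. First, in the real-quadratic branch your list ``$T^2\pm T\pm1$ or $T^2+T+1$'' omits the determinant patterns with $\det X=\det Y\ne\det Z$. There the second eigenline gives $\Lambda^2+3\Lambda M+M^2=0$, so $(\mu/\lambda)^n=-\varphi^{\pm2}$. Lemma~\ref{lem:golden-root} as stated does not cover this, though the unit-group argument disposes of it because $nk=\pm2$ has no solution. Second, $D=\smat{-1&0\\0&1}$ does not invert $Q$: one computes $DQD^{-1}=\smat{1&1\\-1&0}\notin\langle Q\rangle$. So $D$ does not even normalize $\langle Q\rangle$, and both your orientation step in Step~3 and the coset description in Step~4 are false as written. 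Replace $D$ by $H=\smat{0&1\\1&0}$, which satisfies $HQH^{-1}=Q^{-1}$; then $N_G(\langle Q\rangle)=\langle Q\rangle\cup H\langle Q\rangle$ and your Step~4 goes through. The paper's disjointness argument is shorter. If $P$ carries the $s$-representative to the $t$-representative, dividing the first component by the third gives $PQ^eP^{-1}=Q^e$. Hence $P\in C_G(Q)=\langle Q\rangle$, which fixes every power of $Q$, so $s=t$.
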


\begin{proof}
Assume that $X,Y,Z$ commute pairwise, and put
$S=XZ^{-1}$ and $T=YZ^{-1}$. Then $S^n+T^n=\Id$. Set
$U=S^n$ and $V=T^n=\Id-U$. Let
$\varepsilon=\det U$ and $\delta=\det V$. Since
$\det(\Id-U)=1-\tr U+\det U$, we have
$\tr U=1+\varepsilon-\delta$. The four determinant-sign combinations
give
\[
\begin{array}{c|c|c}
(\varepsilon,\delta)&\tr U&\chi_U(x)\\
\hline
(1,1)&1&x^2-x+1\\
(1,-1)&3&x^2-3x+1\\
(-1,1)&-1&x^2+x-1\\
(-1,-1)&1&x^2-x-1
\end{array}.
\]
The last three characteristic polynomials split over the real quadratic
field $K=\mathbb Q(\sqrt5)$, and in those cases $U$ is nonscalar. Since
$U=S^n$, the matrix $S$ automatically commutes with $U$. By
Lemma~\ref{lem:nonscalar-centralizer},
$S\in C_{M_2(\mathbb Q)}(U)=\mathbb Q[U]\cong K$.
Under this identification, $S$ corresponds to an element
$\alpha\in K$, and $S$ is precisely the matrix of the multiplication
operator $m_\alpha$ on the two-dimensional $\mathbb Q$-vector space $K$.
Thus $N_{K/\mathbb Q}(\alpha)=\det S=\pm1$.
The eigenvalues of $m_\alpha$ are $\alpha$ and its conjugate
$\alpha'$. On the other hand, because $S\in M_2(\mathbb Z)$, its
characteristic polynomial
\[
\chi_S(T)=T^2-\tr(S)T+\det S
\]
is a monic polynomial in $\mathbb Z[T]$, and $\alpha$ is one of its
roots. Hence $\alpha$ is an algebraic integer, so
$\alpha\in\mathcal O_K$. Since
$N_{K/\mathbb Q}(\alpha)=\pm1$, we have
$\alpha\in\mathcal O_K^\times$. Equivalently, the two eigenvalues of
$S$ are the conjugates of a unit of $K$. Put
$\varphi=(1+\sqrt5)/2$. The unit-group calculation in the proof of
Lemma~\ref{lem:golden-root} gives
$\mathcal O_K^\times=\{\pm\varphi^k:k\in\mathbb Z\}$.
Thus either eigenvalue of $S$ is of the form $\pm\varphi^k$. In the
last three cases, the eigenvalue sets of $U$ are respectively
\[
\{\varphi^2,\varphi^{-2}\},\qquad
\{\varphi^{-1},-\varphi\},\qquad
\{\varphi,-\varphi^{-1}\}.
\]
The second and third sets correspond to the characteristic polynomials
$x^2+x-1$ and $x^2-x-1$, respectively. If $U=S^n$, comparison of the
absolute values under either real embedding gives
$nk\in\{\pm2,\pm1\}$. This is impossible because $n\ge3$ and
$k\in\mathbb Z$. Hence none of the last three cases can occur.

We must therefore have $\det U=\det V=1$ and
$U^2-U+\Id=0$. The rank-one freeness of the Eisenstein module implies
that $U$ is integrally conjugate to $Q$ or $Q^{-1}$. After interchanging
$S,T$ if necessary and conjugating simultaneously, assume that $U=Q$ and
$V=Q^{-1}$. Since $S^n=Q$, the matrix $S$ commutes with $Q$; because
$C_{\GL_2(\mathbb Z)}(Q)=\langle Q\rangle$, write $S=Q^e$, with
$ne\equiv1\pmod6$. This congruence is solvable exactly when
$\gcd(n,6)=1$, which for odd $n$ is equivalent to $3\nmid n$.
Similarly, $T=Q^{-e}$.

Since $Z$ commutes with $S$, it also commutes with the nonscalar matrix
$Q=S^n$, and hence $Z=Q^s$. Therefore
$X=SZ=Q^{s+e}$ and $Y=TZ=Q^{s-e}$.

It remains to restore the ordered orientation and prove disjointness.
Let
\[
H=\begin{pmatrix}0&1\\1&0\end{pmatrix}.
\]
Then $HQH^{-1}=Q^{-1}$.  Consequently, the solution obtained by
interchanging the first two components of
$(Q^{s+e},Q^{s-e},Q^s)$ is simultaneously conjugate to the displayed
representative with parameter $-s$.  Thus
\eqref{eq:commuting-orbit-decomposition} contains all ordered commuting
solutions.

Finally, suppose that the representatives with parameters $s$ and $t$
are simultaneously conjugate by $P\in G$.  Taking the quotient of the
first and third components gives
\[
P Q^e P^{-1}=Q^e.
\]
Since $e\equiv\pm1\pmod6$, the matrix $Q^e$ generates
$\langle Q\rangle$, so
$P\in C_G(Q)=\langle Q\rangle$.  Such a matrix fixes every power of
$Q$ under conjugation, and therefore $Q^s=Q^t$, that is,
$s=t$ in $\mathbb Z/6\mathbb Z$.  Hence the union is disjoint.
\end{proof}

\subsubsection{Reduction of noncommuting solutions to torsion}

Throughout this subsection, $n\ge5$ is odd. In the noncommuting case, let
$p,q,r$ again denote the Cayley--Hamilton coefficients of
$X^n,Y^n,Z^n$. By the contrapositive of
Lemma~\ref{lem:power-commuting-lift}, $[A,B]\ne0$. Moreover, $[A,C]=[A,B]$ and $[B,C]=-[A,B]$, so $[A,C]$ and
$[B,C]$ are also nonzero. Together with $[A,B]=pq[X,Y]$,
$[A,C]=pr[X,Z]$, and $[B,C]=qr[Y,Z]$, this implies
$pq\ne0$, $pr\ne0$, and $qr\ne0$,
and hence $pqr\ne0$. Therefore all three divisibility relations in
Lemma~\ref{lem:comm-divisibility} may be used below.

To avoid ambiguity, whenever two particular power traces $u,v$ are
chosen from $a,b,c$, let $M_u,M_v$ denote the corresponding root
matrices, and define
\[
\sigma_u=U_n\bigl(\tr M_u,\det M_u\bigr),
\qquad
\sigma_v=U_n\bigl(\tr M_v,\det M_v\bigr).
\]
Thus $\sigma_u,\sigma_v$ always denote the Cayley--Hamilton
coefficients of the two selected components; they are not functions of
the integers $u,v$ alone.

\begin{proposition}\label{prop:three-negative-complete}
Let $n\ge5$ be odd and suppose that
$\det X=\det Y=\det Z=-1$. If the solution is not pairwise commuting,
then $X^2=Y^2=Z^2=\Id$ and $X+Y=Z$.
\end{proposition}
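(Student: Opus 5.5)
Since all three roots have determinant $-1$ and $n$ is odd, the powers $A=X^n$, $B=Y^n$, $C=Z^n$ also have determinant $-1$. The plan is to show that every root has trace zero. Then Cayley--Hamilton gives $X^2=Y^2=Z^2=\Id$, so $X^n=X$, $Y^n=Y$, $Z^n=Z$, and the equation reduces to $X+Y=Z$.

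Suppose, to the contrary, that some root has nonzero trace. By Lemma~\ref{lem:power-commuting-lift} we have $[A,B]\ne0$, so none of $A,B,C$ is scalar.

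First, all three traces $a,b,c$ of the powers are nonzero. If, say, $\tr Y=0$, then $B=Y$ has trace $0$. Expanding $\det C=\det(A+B)=-1$ gives $\det A+\det B+\tr A\tr B-\tr(AB)=-1$, which pins down $\tr(AB)$ in terms of $a,b$. I would use this relation, together with Lemma~\ref{lem:growth-minus} for the roots of nonzero trace, to check whether one or two nonzero-trace roots can coexist with a trace-zero root. In every such case the two nonzero-trace components have $|\tr|\ge11$ and huge coefficients $U_n$, and the divisibility argument below should still apply. So I expect to treat the mixed case inside the main estimate rather than separately.

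Main case. Choose $u,v$ as in Lemma~\ref{lem:three-trace-bound}; both are nonzero. The corresponding roots have nonzero trace: if a root had trace zero, its power would equal the root itself, with trace $0$, and could not be among the two largest unless $(a,b,c)=0$. So Lemma~\ref{lem:growth-minus} gives $\sigma_u^2\ge2|u|+3$ and $\sigma_v^2\ge2|v|+3$. By Lemma~\ref{lem:comm-divisibility}, $\sigma_u^2\sigma_v^2$ divides $\Delta=\det[A,B]$. Lemma~\ref{lem:comm-det-formula} then expresses $\Delta$ through $a,b$ and $\tr(AB)$. The relation $\det(A+B)=-1$ gives $\tr(AB)=ab+1$. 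Substituting all determinants equal to $-1$, $\Delta$ becomes an explicit quadratic form in $a,b$ plus a constant, which I expect to be of the shape $-(a^2+ab+b^2)+\text{const}$ or one of the forms listed in Lemma~\ref{lem:three-trace-bound}.

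\emph{Key step (main obstacle).} Prove $\Delta\ne0$, since otherwise divisibility gives nothing. I would try Lemma~\ref{lem:common-eigenvector} combined with the eigenvalue argument of Lemma~\ref{lem:mixed-delta-nonzero}. A common eigenline makes all three roots simultaneously upper triangular. The diagonal relations $\Lambda+M=N$ and $-\Lambda^{-1}-M^{-1}=-N^{-1}$ then give $\Lambda^2+\Lambda M+M^2=0$, so $(\mu/\lambda)^n$ is a primitive cube root of unity. From there I would derive a contradiction from the real quadratic eigenvalues, or else reduce to $\tr=0$.

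Once $\Delta\ne0$, we get
\[
|\Delta|\ge\sigma_u^2\sigma_v^2\ge(2|u|+3)(2|v|+3)>4|uv|.
\]
This contradicts $|\Delta|\le3|uv|+O(1)$ from Lemma~\ref{lem:three-trace-bound}; the bounded constant term needs small bookkeeping using $|u|,|v|\ge11$. Hence every root has trace $0$, and the conclusion follows.
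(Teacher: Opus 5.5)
Your argument is the paper's: $[A,B]\ne0$ from Lemma~\ref{lem:power-commuting-lift}, the two largest power traces $u,v$, the bound $\sigma_u^2\sigma_v^2\ge(2|u|+3)(2|v|+3)>4|uv|$ from Lemma~\ref{lem:growth-minus}, and a contradiction with Lemma~\ref{lem:three-trace-bound}.

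The step you single out as the main obstacle is not actually an obstacle. You missed this because of a sign slip. From $\det(A+B)=\det A+\det B+\tr A\tr B-\tr(AB)$ with all determinants $-1$ you get $-1=-2+ab-\tr(AB)$, so $\tr(AB)=ab-1$, not $ab+1$. Lemma~\ref{lem:comm-det-formula} then gives
\[
\Delta=4+a^2+b^2-(ab-1)^2+ab(ab-1)=a^2+ab+b^2+3>0 .
\]
This is the first form of Lemma~\ref{lem:three-trace-bound} plus $3$. So $\Delta\ne0$ is automatic, and $0<\Delta\le3|uv|+3<4|uv|$ closes the argument exactly as in the paper. With your sign you would instead get $a^2-ab+b^2+3$, which is not on that lemma's list, so the bound you quote would not literally apply.

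If you keep the eigenline route instead, it does work, but you should finish it rather than leave the alternative ``or else reduce to $\tr=0$'' open. An integral matrix of determinant $-1$ has characteristic discriminant $t^2+4>0$. Hence $\lambda,\mu$, and therefore $\Lambda,M$, are real and nonzero, and $\Lambda^2+\Lambda M+M^2=(\Lambda+M/2)^2+\tfrac34M^2>0$ contradicts the diagonal relation.

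Your opening claim that $a,b,c$ are all nonzero is neither proved nor needed. Once some root has nonzero trace:
\begin{itemize}
\item Lemma~\ref{lem:growth-minus} makes $(a,b,c)\ne(0,0,0)$;
\item Lemma~\ref{lem:three-trace-bound} then makes $u,v\ne0$;
\item a trace-zero root can never supply $u$ or $v$.
\end{itemize}
So the mixed case is covered by the main estimate. Your ending (all root traces zero, hence $X^2=Y^2=Z^2=\Id$ by Cayley--Hamilton) is equivalent to the paper's route through $a=b=c=0$ and Lemma~\ref{lem:finite-orders}.
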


\begin{proof}
By Lemma~\ref{lem:power-commuting-lift}, $[A,B]\ne0$. Put
$\Delta=\det[A,B]$. The commutator determinant formula gives
$\Delta=a^2+ab+b^2+3>0$. Suppose that $(a,b,c)\ne(0,0,0)$, and choose
the two traces $u,v$ of largest absolute value. The negative-determinant
growth estimate gives $|v|\ge11$, and the two corresponding
Cayley--Hamilton coefficients satisfy
$\sigma_u^2\sigma_v^2\ge(2|u|+3)(2|v|+3)>4|uv|$. On the other hand,
Lemma~\ref{lem:three-trace-bound} gives
$0<\Delta\le3|uv|+3<4|uv|$. This contradicts
$\sigma_u^2\sigma_v^2\mid\Delta$. Hence $a=b=c=0$, so
$A^2=B^2=C^2=\Id$. For example, $X^{2n}=\Id$, so $X$ has finite order;
since $\det X=-1$, Lemma~\ref{lem:finite-orders} gives $X^2=\Id$.
As $n$ is odd, $A=X$. The same argument applies to $Y,Z$.
\end{proof}

\begin{proposition}\label{prop:two-negative-complete}
Let $n\ge5$ be odd and, after applying an element of $\Gamma$, assume that
$\det X=\det Z=-1$ and $\det Y=1$. If the solution is not pairwise
commuting, then $X^2=Z^2=\Id$ and $Y^2=-\Id$.
\end{proposition}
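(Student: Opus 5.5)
We follow the template of Proposition~\ref{prop:three-negative-complete}: compute $\Delta=\det[A,B]$ as a quadratic form in the power traces, show $\Delta\ne0$, and use the divisibilities of Lemma~\ref{lem:comm-divisibility} together with the growth lemmas to force $a=b=c=0$. Throughout, $A=X^n$, $B=Y^n$, $C=Z^n$, with $\det A=\det C=-1$, $\det B=1$ and $a=\tr A$, $b=\tr B$, $c=\tr C=a+b$.

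\textbf{Step 1: a formula for $\Delta$.} For $2\times2$ matrices one has $\det(A+B)=\det A+\det B+\tr A\,\tr B-\tr(AB)$. With $\det C=-1$ this gives $\tr(AB)=ab+1$. Substituting into Lemma~\ref{lem:comm-det-formula} and simplifying ($-(ab+1)^2+ab(ab+1)=-(ab+1)$) yields
\[
\Delta=-a^2-ab+b^2-5 .
\]

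\textbf{Step 2: $\Delta\ne0$.} By Lemma~\ref{lem:power-commuting-lift}, $[A,B]\ne0$. Hence none of $A,B,C$ is scalar, since a scalar component would make the other two commute. Lemma~\ref{lem:mixed-delta-nonzero}(i) then gives $\Delta\ne0$. As noted before the proposition, $pqr\ne0$, so all three divisibilities $p^2q^2,\ p^2r^2,\ q^2r^2\mid\Delta$ are available.

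\textbf{Step 3: trace dichotomies.} For $X$ and $Z$ we use their determinant $-1$.
\begin{itemize}
\item If $\tr X=0$, then $X^2=\Id$, so $A=X$, $a=0$ and $p=\pm1$.
\item Otherwise Lemma~\ref{lem:growth-minus} gives $|a|\ge11$ and $p^2\ge2|a|+3$.
\end{itemize}
The same alternatives hold for $Z$, $c$ and $r$. For $Y$ we use Lemma~\ref{lem:growth-plus}, which applies since $q\ne0$.
\begin{itemize}
\item Either $\tr Y\in\{0,\pm1\}$, $|q|=1$ and $|b|\le1$ (the torsion branch),
\item or $q^2\ge2|b|+3$ (the growth branch).
\end{itemize}

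\textbf{Step 4: case analysis.}
\begin{itemize}
\item \emph{$a=c=0$.} Then $b=0$. Step~5 below shows this is the desired conclusion.
\item \emph{$a=0$, $c\ne0$.} Then $b=c$ and $|b|\ge11$, so $Y$ is in the growth branch. We get $\Delta=b^2-5>0$, while $q^2r^2\ge(2|b|+3)^2>b^2-5$. This contradicts $q^2r^2\mid\Delta$.
\item \emph{$c=0$, $a\ne0$.} Then $a=-b$ and $\Delta=b^2-5$. Using $p^2q^2\mid\Delta$, the same estimate gives a contradiction.
\item \emph{$a,c\ne0$.} Then $|a|,|c|\ge11$ and $p^2r^2\ge(2|a|+3)(2|c|+3)$.
  \begin{itemize}
  \item If $Y$ is in the torsion branch, then $|b|\le1$ and $|c|\ge|a|-1$. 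Hence $0<|\Delta|\le a^2+|a|+6<(2|a|+3)(2|a|+1)\le p^2r^2$, a contradiction.
  \item If all three components are in the growth branch, let $u,v$ be the two traces of largest absolute value; then $|v|\ge11$. Lemma~\ref{lem:three-trace-bound}, applied to the form $-a^2-ab+b^2$, gives $|\Delta|\le3|uv|+5<4|uv|$. But the product of the squared coefficients of the corresponding two components exceeds $(2|u|+3)(2|v|+3)>4|uv|$. This contradicts the relevant divisibility.
  \end{itemize}
\end{itemize}

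\textbf{Step 5: the surviving case $a=b=c=0$.} Since $A$ and $C$ are involutions of determinant $-1$, the argument of Proposition~\ref{prop:three-negative-complete} gives $X^2=Z^2=\Id$. For $Y$ we are in the torsion branch with $b=0$. By the periodic table in Lemma~\ref{lem:growth-plus}, $\tr Y=\pm1$ gives either $|b|=1$ or $q=0$, and both are excluded. So $\tr Y=0$, and Cayley--Hamilton gives $Y^2=-\Id$.

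\textbf{Main obstacle.} We expect the delicate step to be the mixed case in Step~4 where $Y$ lies in the torsion branch while $|a|,|c|\ge11$. There Lemma~\ref{lem:three-trace-bound} cannot be applied symmetrically, and one must instead use $c=a+b$ with $|b|\le1$ to compare $|\Delta|\approx a^2$ directly with $p^2r^2\approx4a^2$. We also need to check that the constant $-5$ in $\Delta$ never spoils the strict inequalities; this requires $|v|\ge11$ and $\Delta\ne0$, both of which are in hand.
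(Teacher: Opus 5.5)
Your proof is correct and uses the same ingredients as the paper: the formula $\Delta=-a^2-ab+b^2-5$, Lemma~\ref{lem:mixed-delta-nonzero} to get $\Delta\ne0$, the two growth lemmas, commutator divisibility, and Lemma~\ref{lem:three-trace-bound}.

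The paper avoids your four-way case split with one observation. If $a\ne0$ or $c\ne0$, the larger trace satisfies $|u|\ge11$, so $|v|\ge|u|/2\ge11/2>1$. This puts both top components in the growth branch at once. In particular, the subcase you flag as the main obstacle is not delicate. When $|b|\le1$ and $|a|,|c|\ge11$, the two largest traces are $a$ and $c$, and Lemma~\ref{lem:three-trace-bound} applies directly.

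One step needs an extra line: your claim in Step~5 that $Y$ lies in the torsion branch when $b=0$. The statement of Lemma~\ref{lem:growth-plus} does not exclude the growth branch with $d=0$. Either of the following closes it:
\begin{itemize}
\item The trace-discriminant identity $-4=((\tr Y)^2-4)q^2$ forces $(\tr Y)^2<4$.
\item With $a=b=0$ you have $p^2=1$ and $\Delta=-5$, so $q^2\mid 5$ forces $q^2=1<3$.
\end{itemize}

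The paper finishes more directly. From $b=0$ and $\det B=1$ it gets $B^2=-\Id$. So $Y$ is torsion with $\ord(Y^n)=4$, and since $n$ is odd, $\ord(Y)=4$, i.e.\ $Y^2=-\Id$.
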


\begin{proof}
By Lemma~\ref{lem:power-commuting-lift}, $[A,B]\ne0$. The commutator
determinant formula gives $\Delta=-a^2-ab+b^2-5$. By
Lemma~\ref{lem:mixed-delta-nonzero}, $\Delta\ne0$.

If $a\ne0$ or $c\ne0$, the negative-determinant growth estimate shows
that one of these traces has absolute value at least $11$. Choose the two
traces $u,v$ of largest absolute value. Then
$|v|\ge|u|/2\ge11/2$, so the two corresponding coefficients are both in
the growth branch and satisfy
$\sigma_u^2\sigma_v^2>4|uv|$. But
Lemma~\ref{lem:three-trace-bound} gives
$0<|\Delta|\le3|uv|+5<4|uv|$, contradicting
$\sigma_u^2\sigma_v^2\mid\Delta$. Thus $a=c=0$ and $b=0$.

Thus $A^2=C^2=\Id$. Since $A=X^n$ and $C=Z^n$, we have
$X^{2n}=Z^{2n}=\Id$. Hence $X$ and $Z$ are torsion matrices. Since
$\det X=\det Z=-1$, Lemma~\ref{lem:finite-orders} gives
$X^2=Z^2=\Id$.
The matrix $B$ satisfies $\tr B=0$ and $\det B=1$, so
$B^2=-\Id$. Since $Y^n=B$, the matrix $Y$ has finite order. Because
$n$ is odd, the only possibility is $\ord(Y)=4$, that is,
$Y^2=-\Id$.
\end{proof}

\begin{proposition}\label{prop:one-negative-complete}
Let $n\ge5$ be odd and, after applying an element of $\Gamma$, assume that
$\det X=-1$ and $\det Y=\det Z=1$. If the solution is not pairwise
commuting, then $X^2=\Id$ and
$\ord(Y),\ord(Z)\in\{3,4,6\}$.
\end{proposition}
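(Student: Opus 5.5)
The proof follows the same pattern as Propositions~\ref{prop:three-negative-complete} and~\ref{prop:two-negative-complete}: compute $\Delta=\det[A,B]$ in terms of the power traces, show that the growth estimates are incompatible with the divisibilities of Lemma~\ref{lem:comm-divisibility} unless all traces are small, and then read off the torsion. By Lemma~\ref{lem:power-commuting-lift}, $[A,B]\ne0$, and as in the preamble to this subsection $pqr\ne0$. In particular $q,r\ne0$, so the branch $u=0$ of Lemma~\ref{lem:growth-plus} never occurs for $Y$ or $Z$.

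\textbf{Step 1: formula for $\Delta$.} Expanding $\det(A+B)$ gives $1=\det C=\det A+\det B+\tr A\,\tr B-\tr(AB)$. Hence $\tr(AB)=ab-1$. Substituting this into Lemma~\ref{lem:comm-det-formula} with $\det A=-1$, $\det B=1$ should give
\[
\Delta=-a^2+ab+b^2-5.
\]
Lemma~\ref{lem:mixed-delta-nonzero}\textup{(ii)} gives $\Delta\ne0$. The quadratic part is one of the six forms in Lemma~\ref{lem:three-trace-bound}.

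\textbf{Step 2: traces are small.} This is the main step. Choose the two traces $u,v$ of largest absolute value among $a,b,c$, with corresponding coefficients $\sigma_u,\sigma_v$. Suppose $|u|\ge3$. Then $|v|\ge|u|/2\ge 3/2$, so $|v|\ge2$.
\begin{itemize}
\item A component in the torsion branch of Lemma~\ref{lem:growth-plus} has power trace of absolute value at most $1$, so neither selected component is in that branch.
\item If a selected component is the determinant $-1$ one with nonzero trace, Lemma~\ref{lem:growth-minus} applies. The subcase $\tr X=0$ forces $X^2=\Id$, hence $a=0$, which contradicts $|a|\ge2$ when $X$ is selected.
\item The only delicate case is a determinant-one component with $|\tr M|=2$. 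There the power trace is $\pm2$ and $\sigma^2=n^2\ge25\ge 2\cdot2+3$, so the inequality $\sigma^2\ge2|d|+3$ still holds.
\end{itemize}
Thus $\sigma_u^2\sigma_v^2\ge(2|u|+3)(2|v|+3)>4|uv|+5$. On the other hand, Lemma~\ref{lem:three-trace-bound} gives $0<|\Delta|\le3|uv|+5<4|uv|+5$. This contradicts the fact that $\sigma_u^2\sigma_v^2$ divides $\Delta$, which follows from Lemma~\ref{lem:comm-divisibility} because $\{\sigma_u,\sigma_v\}$ is a pair from $\{p,q,r\}$. Hence $|a|,|b|,|c|\le2$.

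\textbf{Step 3: $a=0$ and $|b|\le1$.} If $a\ne0$, then $\tr X\ne0$ (otherwise $X^2=\Id$ and $a=0$), so Lemma~\ref{lem:growth-minus} gives $|a|\ge11$, which is impossible. Therefore $a=0$, $c=b$, and $\Delta=b^2-5$.

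Now suppose $|b|=2$. Then $\Delta=-1$. But $Y$ is not in the torsion branch, since that branch has $|b|\le1$. So $q^2\ge2|b|+3=7$, and $q^2\mid\Delta=-1$ is impossible. Hence $b=c\in\{-1,0,1\}$.

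\textbf{Step 4: conclusion.} Since $a=0$ and $\det A=-1$, Cayley--Hamilton gives $A^2=\Id$. Then $X^{2n}=\Id$, so $X$ is torsion with $\det X=-1$, and Lemma~\ref{lem:finite-orders} gives $X^2=\Id$.

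The matrices $B$ and $C$ have determinant $1$ and trace in $\{-1,0,1\}$, so they are torsion of order $3$, $4$ or $6$. They are nonscalar, since no component can be scalar when $[A,B]\ne0$. Consequently $Y$ and $Z$ are torsion, with orders in $\{1,2,3,4,6\}$ and determinant $1$. Orders $1$ and $2$ would make $Y=\pm\Id$ and hence $B$ scalar, so these are excluded. Therefore $\ord(Y),\ord(Z)\in\{3,4,6\}$.
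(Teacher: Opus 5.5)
Your proof is correct and follows essentially the same route as the paper: the formula $\Delta=-a^2+ab+b^2-5$, nonvanishing via Lemma~\ref{lem:mixed-delta-nonzero}, the growth-versus-divisibility contradiction from Lemmas~\ref{lem:growth-minus}, \ref{lem:growth-plus}, \ref{lem:three-trace-bound}, and~\ref{lem:comm-divisibility}, and Cayley--Hamilton torsion at the end. The differences are only organizational. You bound all three traces by $2$ in one uniform growth step before forcing $a=0$ and excluding $|b|=2$ via $\Delta=-1$, whereas the paper forces $a=0$ first and then does a branch analysis of $q^2r^2\mid b^2-5$. You also obtain $\ord(Y),\ord(Z)\in\{3,4,6\}$ from the torsion of $B,C$ together with nonscalarity, whereas the paper uses the trace-discriminant identity to get $(\tr Y)^2=(\tr Z)^2=b^2$.
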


\begin{proof}
Again $[A,B]\ne0$, and
$\Delta=-a^2+ab+b^2-5$. By Lemma~\ref{lem:mixed-delta-nonzero},
$\Delta\ne0$.

If $a\ne0$, then $|a|\ge11$. Choosing the two traces $u,v$ of
largest absolute value, exactly the same argument as in the previous
proposition gives
$\sigma_u^2\sigma_v^2>4|uv|$ and
$0<|\Delta|\le3|uv|+5<4|uv|$, a contradiction. Hence $a=0$ and
$c=b$, so $\Delta=b^2-5$.

Let $q=U_n(\tr Y,1)$ and $r=U_n(\tr Z,1)$. Since $[A,B]\ne0$ and
$B,C$ are nonscalar, $q,r\ne0$. The commutator-divisibility relations
give $q^2r^2\mid b^2-5$. If both $q$ and $r$ lie in the growth branch,
then $q^2r^2\ge(2|b|+3)^2>|b^2-5|$, impossible. If one coefficient lies
in the low-order branch, then $|b|\le1$. If the other lies in the growth
branch, then for $b=0$ its square is at least $4$ and cannot divide $5$;
for $b=\pm1$ its square is at least $9$ and cannot divide $4$. Thus
$|q|=|r|=1$ and $|b|\le1$. The trace-discriminant identity
$b^2-4=((\tr Y)^2-4)q^2$, together with the analogous formula for $Z$,
gives
$(\tr Y)^2=(\tr Z)^2=b^2\in\{0,1\}$. Hence each of $Y,Z$ has order
$3,4,$ or $6$. Finally, because $a=0$ and $\det A=-1$, the Cayley--Hamilton theorem
gives $A^2=\Id$. Thus $X^{2n}=\Id$, so $X$ has finite order. Since
$\det X=-1$, Lemma~\ref{lem:finite-orders} yields $X^2=\Id$.
\end{proof}

\begin{proposition}\label{prop:all-positive}
Let $n\ge5$ be odd. There is no noncommuting solution satisfying
$\det X=\det Y=\det Z=1$.
\end{proposition}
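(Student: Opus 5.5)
The plan is to run the same trace-growth argument as in Propositions~\ref{prop:three-negative-complete}--\ref{prop:one-negative-complete}, now using only Lemma~\ref{lem:growth-plus}, and then to finish with the all-positive exclusion already proved inside Theorem~\ref{thm:finite-orbits}.

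\textbf{Setup.} Write $A=X^n$, $B=Y^n$, $C=Z^n$, all in $\SL_2(\mathbb Z)$, and $a=\tr A$, $b=\tr B$, $c=a+b$. Expanding $\det(A+B)$ gives
\[
1=\det(A+B)=2+ab-\tr(AB),
\]
so $\tr(AB)=ab+1$. Substituting into Lemma~\ref{lem:comm-det-formula} gives
\[
\Delta=\det[A,B]=4-a^2-b^2-(ab+1)^2+ab(ab+1)=3-a^2-ab-b^2 .
\]
The solution is noncommuting, so $[A,B]\ne0$ by Lemma~\ref{lem:power-commuting-lift}. Lemma~\ref{lem:SL2-singular-commutator} then gives $\Delta\ne0$. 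As explained before Proposition~\ref{prop:three-negative-complete}, $pqr\ne0$, so the three divisibilities of Lemma~\ref{lem:comm-divisibility} are available. Also $u=0$ in Lemma~\ref{lem:growth-plus} is excluded, so each coefficient $p,q,r$ is in one of two branches:
\begin{itemize}
\item the torsion branch: coefficient $\pm1$ and power trace of absolute value at most $1$;
\item the growth branch: squared coefficient at least $2|\text{trace}|+3$.
\end{itemize}

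\textbf{Growth step.} Let $u,v$ be the two of $a,b,c$ of largest absolute value, with $|u|\ge|v|$.
\begin{itemize}
\item If both corresponding coefficients are in the growth branch, then
\[
\sigma_u^2\sigma_v^2\ge(2|u|+3)(2|v|+3)>3|uv|+3 .
\]
Lemma~\ref{lem:three-trace-bound} applied to $a^2+ab+b^2$ gives $0<|\Delta|\le3|uv|+3$. This contradicts $\sigma_u^2\sigma_v^2\mid\Delta$.
\item If $(a,b,c)=(0,0,0)$, then every trace is $0$ and all three components fall in the torsion branch directly.
\item Otherwise, one of the two top coefficients is in the torsion branch. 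Then $|v|\le1$, and hence $|u|\le2|v|\le2$, so all three traces lie in $\{-2,\dots,2\}$.
\end{itemize}

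\textbf{Excluding trace $\pm2$.} Suppose some power trace equals $\pm2$. Its coefficient is then in the growth branch, so its square is at least $7$, and it must divide $\Delta$ together with the coefficient of another component. I would tabulate the finitely many admissible triples with $c=a+b$ and all traces in $\{-2,\dots,2\}$. In each case either $\Delta=0$, which is excluded (for example $(a,b)=(1,1)$ or $(2,-1)$), or $|\Delta|\le5$, which is too small to be divisible by a square that is at least $7$. This short finite table is the only fiddly point, and I expect it to close immediately.

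\textbf{Conclusion.} All of $a,b,c$ now lie in $\{-1,0,1\}$, and each of $A,B,C$ is a nonscalar torsion element of $\SL_2(\mathbb Z)$ of order $3$, $4$ or $6$. Thus $(A,B,C)$ is a noncommuting torsion triple with all determinants positive. The opening part of the proof of Theorem~\ref{thm:finite-orbits} shows that no such triple exists: the seven quadratic equations in its table have no integral solutions. This contradiction proves the proposition.
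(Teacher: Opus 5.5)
Your proof is correct and follows the paper's route. Commutator divisibility together with the growth branches of Lemma~\ref{lem:growth-plus} forces the power traces into $\{-1,0,1\}$, and the all-positive exclusion in the proof of Theorem~\ref{thm:finite-orbits} then gives the contradiction. Your bookkeeping is somewhat leaner than the paper's in three places:
\begin{enumerate}[label=\textup{(\alph*)}]
\item Splitting on whether both top coefficients lie in the growth branch gives $|v|\le1$ at once, so you skip the paper's separate elimination of traces $\pm3,\pm4$.
\item Your $\pm2$ case closes even faster than you expect. With $|v|\le1$, any such triple has $a^2+b^2+c^2=6$, so $\Delta=3-(a^2+b^2+c^2)/2=0$.
\item Reading off torsion of $A,B,C$ directly from Cayley--Hamilton with traces in $\{-1,0,1\}$ avoids the paper's detour through the root traces. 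It also makes your unjustified remark that trace $0$ lands ``directly'' in the torsion branch unnecessary.
\end{enumerate}
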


\begin{proof}
Suppose otherwise. By Lemma~\ref{lem:power-commuting-lift},
$[A,B]\ne0$. Since $A,B\in\SL_2(\mathbb Z)$,
Lemma~\ref{lem:SL2-singular-commutator} gives
$\Delta=\det[A,B]\ne0$. On the other hand, the commutator determinant
formula gives $\Delta=3-(a^2+ab+b^2)$. Let $u,v$ be the two traces of
largest absolute value.

If $|v|\ge3$, the two corresponding Cayley--Hamilton coefficients
both lie in the growth branch, so
$\sigma_u^2\sigma_v^2>4|uv|$. Moreover,
$a^2+ab+b^2=(a^2+b^2+c^2)/2\ge v^2\ge9$, and hence $\Delta\ne0$.
Lemma~\ref{lem:three-trace-bound} gives
$0<|\Delta|\le3|uv|+3<4|uv|$, contradicting the divisibility relation.
Thus $|v|\le2$ and consequently $|u|\le4$.

Let $d$ be the trace of any one of $A,B,C$, and let $\rho$ denote its
corresponding Cayley--Hamilton coefficient. The trace-discriminant
identity gives $\rho^2\mid d^2-4$. If $d=\pm3$, the corresponding power
matrix is nonscalar, so $\rho\ne0$. It cannot lie in the low-order branch
of Lemma~\ref{lem:growth-plus}, because that branch requires $|d|\le1$.
Thus it lies in the growth branch and $\rho^2\ge9$. But $\rho^2$ is a
positive square dividing $5$, so $\rho^2=1$, a contradiction. If
$d=\pm4$, the same argument gives $\rho^2\ge11$, whereas the only
positive squares dividing $12$ are $1$ and $4$, again a contradiction.
Therefore the trace of a positive-determinant $n$th power cannot be
$\pm3$ or $\pm4$, and
$a,b,c\in\{-2,-1,0,1,2\}$. Suppose that one of these traces is $\pm2$
and that the corresponding power is nonscalar. From
$d^2-4=((\tr M)^2-4)U_n(\tr M,1)^2$, the root trace must also be
$\pm2$. Hence the corresponding Cayley--Hamilton coefficient $p$
satisfies $|p|=n$ and $p^2\ge25$. The other two power matrices are also
nonscalar, so their Cayley--Hamilton coefficients are nonzero. If $\tau$
is either one of them, then $\tau^2\ge1$, and commutator divisibility
gives $p^2\tau^2\mid\Delta$. Since $\Delta\ne0$,
$|\Delta|\ge p^2\tau^2\ge25$. But $|a|,|b|\le2$, so
\[
\begin{aligned}
|\Delta|
&=|3-a^2-ab-b^2|\\
&\le3+a^2+|ab|+b^2\\
&\le15<25,
\end{aligned}
\]
a contradiction. If one of the powers is scalar, then $[A,B]=0$, also
contradicting the opening conclusion $[A,B]\ne0$.

It remains that all three traces lie in $\{-1,0,1\}$. If one coefficient
were still in the growth branch, the identity
$d^2-4=(t^2-4)u^2$ would give the following contradictions. For
$d=\pm1$, the square $u^2\ge5$ cannot divide $3$. For $d=0$, the
condition $u^2\ge3$ allows only the square $4$, but then
$t^2-4=-1$, so $t^2=3$, impossible. Hence all three coefficients lie in
the low-order branch. The proof of Lemma~\ref{lem:growth-plus} shows that
this branch is possible only when the root trace is $0$ or $\pm1$.
Consequently, $X,Y,Z$ are nonscalar positive-determinant torsion matrices,
and so are $A,B,C$. This contradicts
Theorem~\ref{thm:finite-orbits}, which has no all-positive orbit.
\end{proof}

\begin{theorem}\label{thm:odd-finite-order}
Let $n\ge5$ be odd.  If a solution is not pairwise commuting, then
$X,Y,Z$ are torsion matrices, and there is a unique
$\mathbf T\in\mathscr S_{\mathrm{ord}}$ such that
\[
(X^n,Y^n,Z^n)\in\mathcal O_G(\mathbf T).
\]
Equivalently, the ordered power triple belongs to the disjoint orbit
decomposition \eqref{eq:ordered-torsion-decomposition}; the auxiliary
$\Gamma$-normalization is no longer present in the conclusion.
\end{theorem}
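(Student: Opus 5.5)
The plan is to assemble the four determinant-pattern propositions just proved and then pass to the ordered torsion classification. Let $(X,Y,Z)\in\mathcal F_n$ be noncommuting with $n\ge5$ odd. Every determinant lies in $\{\pm1\}$. Some element $\gamma\in\Gamma$ moves the determinant pattern to one of the four normalized forms: all equal to $-1$; $(-1,1,-1)$; $(-1,1,1)$; or all equal to $1$. Here $\iota$ does not change determinants since the matrices are $2\times2$, so the $\rho_\pi$ are what permute the positions. Because $\Gamma$ commutes with simultaneous conjugation, preserves $\mathcal F_n^{\mathrm{nc}}$, and preserves the property of being torsion, it suffices to treat the normalized solution $\gamma(X,Y,Z)$. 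Note also that $\gamma$ acts compatibly on power triples: $\rho_\pi$ only permutes the components and changes signs, and $(-M)^n=-M^n$ for odd $n$.

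In the normalized cases I apply the earlier results in turn.
\begin{itemize}
\item Proposition~\ref{prop:three-negative-complete} gives $X^2=Y^2=Z^2=\Id$.
\item Proposition~\ref{prop:two-negative-complete} gives $X^2=Z^2=\Id$ and $Y^2=-\Id$.
\item Proposition~\ref{prop:one-negative-complete} gives $X^2=\Id$ and $\ord(Y),\ord(Z)\in\{3,4,6\}$.
\item Proposition~\ref{prop:all-positive} excludes the all-positive pattern.
\end{itemize}
Hence $X,Y,Z$ are torsion in every case, and so are their powers $A=X^n$, $B=Y^n$, $C=Z^n$.

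Next I check that the power triple lies in $\mathcal N_{\mathrm{tor}}$. We have $A+B=C$ by the equation. The contrapositive of Lemma~\ref{lem:power-commuting-lift} gives $[A,B]\ne0$. Undoing $\gamma$ keeps us inside $\mathcal N_{\mathrm{tor}}$, because $\Gamma$ acts on that set. Therefore the original ordered triple $(X^n,Y^n,Z^n)$ lies in $\mathcal N_{\mathrm{tor}}$. Proposition~\ref{prop:ordered-torsion-orbits} then supplies a $\mathbf T\in\mathscr S_{\mathrm{ord}}$ with $(X^n,Y^n,Z^n)\in\mathcal O_G(\mathbf T)$. Uniqueness of $\mathbf T$ follows from the disjointness in \eqref{eq:ordered-torsion-decomposition}: orbits partition the set, and distinct elements of $\mathscr S_{\mathrm{ord}}$ were shown to be nonconjugate. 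This is how the $\Gamma$-normalization disappears from the conclusion. We used $\Gamma$ only to reach a normalized case, proved torsion there, and then transported the membership in $\mathcal N_{\mathrm{tor}}$ back to the ordered triple before invoking the ordered decomposition.

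The main obstacle is bookkeeping rather than new mathematics, in two places. First, the normalization must be justified: the propositions are stated with specific determinant positions ``after applying an element of $\Gamma$'', so we need that $\Gamma$ maps noncommuting solutions to noncommuting solutions. This holds because the symmetries only permute the components and negate them, and negation and permutation do not change whether the components commute. Second, Proposition~\ref{prop:ordered-torsion-orbits} must be applied to the unnormalized ordered power triple, not to the normalized one, so the uniqueness statement refers to the original ordering.
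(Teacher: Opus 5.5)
Your proposal is correct and follows the paper's proof. Like the paper, you normalize the determinant pattern by an element of $\Gamma$, then use Propositions~\ref{prop:three-negative-complete}, \ref{prop:two-negative-complete}, \ref{prop:one-negative-complete} and \ref{prop:all-positive} to obtain torsion or exclude the all-positive case, then place the ordered power triple in $\mathcal N_{\mathrm{tor}}$ via Lemma~\ref{lem:power-commuting-lift}, and finally read off existence and uniqueness of $\mathbf T$ from Proposition~\ref{prop:ordered-torsion-orbits}. Your extra bookkeeping — that $\Gamma$ preserves noncommutativity and torsion, commutes with odd powers, and that the ordered decomposition is applied to the unnormalized triple — makes explicit what the paper's short proof leaves implicit.
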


\begin{proof}
Separate the proof into the four cases in which the number of
negative-determinant components is $0,1,2,$ or $3$.  The case of no
negative determinant is excluded by Proposition~\ref{prop:all-positive}.
The remaining three cases are reduced to torsion matrices by
Propositions~\ref{prop:one-negative-complete},
\ref{prop:two-negative-complete}, and
\ref{prop:three-negative-complete}, respectively.  The power triple is
therefore an element of $\mathcal N_{\mathrm{tor}}$, and the existence
and uniqueness of $\mathbf T$ follow from
Proposition~\ref{prop:ordered-torsion-orbits}.
\end{proof}

\subsubsection{The cubic equation}

\begin{lemma}
\label{lem:cubic-symmetry-list}
Put
\[
S_-:=\{0,\pm4,\pm14\},
\qquad
S_+:=\{0,\pm2,\pm18\}.
\]
For a determinant pattern
$(\varepsilon_X,\varepsilon_Y,\varepsilon_Z)$ chosen from
\[
(-1,-1,-1),\qquad(-1,-1,1),\qquad(-1,1,1),\qquad(1,1,1),
\]
let $a\in S_{\varepsilon_X}$, $b\in S_{\varepsilon_Y}$, and
$c\in S_{\varepsilon_Z}$ satisfy $c=a+b$, where
$S_{-1}=S_-$ and $S_1=S_+$.  The numbers of raw ordered trace triples
are respectively
\[
13,\qquad9,\qquad7,\qquad13,
\]
so there are $42$ raw triples in total.  Modulo the overall sign involution
and the stabilizer of the determinant pattern induced by $\Gamma$, these
split into respectively $3,4,4,3$ symmetry orbits, hence $14$ orbits in
total, with the following representatives:
\begin{align*}
(-1,-1,-1):\quad
 &(0,0,0),\ (-4,0,-4),\ (-14,0,-14),\\
(-1,-1,1):\quad
 &(0,0,0),\ (-4,4,0),\ (-14,14,0),\ (-14,-4,-18),\\
(-1,1,1):\quad
 &(0,0,0),\ (0,-2,-2),\ (0,-18,-18),\ (-4,2,-2),\\
(1,1,1):\quad
 &(0,0,0),\ (-2,0,-2),\ (-18,0,-18).
\end{align*}
\end{lemma}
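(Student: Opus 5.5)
The plan is a direct finite enumeration, followed by an explicit computation of the induced symmetry action on trace triples. I read $a,b,c$ as the traces of $A=X^3$, $B=Y^3$, $C=Z^3$, although the lemma itself is purely about integers.

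\textbf{Step 1: the induced action on traces.} Since $n=3$ is odd, $(-M)^3=-M^3$. Hence each $\rho_\pi$ and $\iota$ of $\Gamma$ acts on the power traces by the same signed permutation it induces on components:
\begin{itemize}
\item $\rho_{12}$ sends $(a,b,c)$ to $(b,a,c)$;
\item $\rho_{23}$ sends $(a,b,c)$ to $(a,-c,-b)$;
\item $\rho_{13}$ sends $(a,b,c)$ to $(-c,b,-a)$;
\item $\iota$ sends $(a,b,c)$ to $(-a,-b,-c)$.
\end{itemize}
Equivalently, $S_3$ permutes $(a,b,-c)$, whose entries sum to zero. Negation does not change determinant signs, so the stabilizer of a determinant pattern is as follows:
\begin{itemize}
\item for $(-1,-1,-1)$ and $(1,1,1)$, all of $\Gamma$;
\item for $(-1,-1,1)$, the group generated by $\rho_{12}$ and $\iota$;
\item for $(-1,1,1)$, the group generated by $\rho_{23}$ and $\iota$.
\end{itemize}
This step must be stated carefully, because the orbit counts depend entirely on it. The stabilizer for $(-1,1,1)$ is the one that is easy to get wrong.

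\textbf{Step 2: counting raw triples.} For each pattern I list all $(a,b)\in S_{\varepsilon_X}\times S_{\varepsilon_Y}$ with $a+b\in S_{\varepsilon_Z}$. Each set has only five elements, so there are $25$ sums per pattern.

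In pattern $(-1,-1,-1)$ the admissible pairs are:
\begin{itemize}
\item $(0,0)$;
\item the eight pairs with exactly one entry zero;
\item the four cancelling pairs $(\pm4,\mp4)$ and $(\pm14,\mp14)$.
\end{itemize}
The mixed sums $\pm8,\pm10,\pm18,\pm28$ are not in $S_-$. This gives $13$.

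Pattern $(1,1,1)$ is identical with $\{2,18\}$ in place of $\{4,14\}$, so it also gives $13$; here $\pm4,\pm16,\pm20,\pm36\notin S_+$.

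For $(-1,-1,1)$ we need $a,b\in S_-$ with $a+b\in S_+$. The admissible triples are:
\begin{itemize}
\item $(0,0,0)$;
\item the four cancelling pairs;
\item $(\pm4,\pm14,\pm18)$ and $(\pm14,\pm4,\pm18)$, with matching signs.
\end{itemize}
This gives $9$.

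For $(-1,1,1)$ we need $a\in S_-$ and $b,a+b\in S_+$. The admissible triples are:
\begin{itemize}
\item $(0,b,b)$ for the five values $b\in S_+$;
\item $(\pm4,\mp2,\pm2)$.
\end{itemize}
This gives $7$, and the total is $42$.

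\textbf{Step 3: orbits and representatives.} Each raw triple is placed in its orbit under the group from Step 1.

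In the two homogeneous patterns, $(0,0,0)$ is fixed. The six triples whose vector $(a,b,-c)$ is a signed permutation of $(0,4,-4)$ form one orbit, and likewise for $(0,14,-14)$. This gives $1+6+6=13$ raw triples in $3$ orbits, with the listed representatives. The same holds with $2,18$ in pattern $(1,1,1)$.

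For $(-1,-1,1)$ the orbits are:
\begin{itemize}
\item $\{(0,0,0)\}$;
\item $\{(\pm4,\mp4,0)\}$;
\item $\{(\pm14,\mp14,0)\}$;
\item the four triples $(\pm14,\pm4,\pm18)$, $(\pm4,\pm14,\pm18)$.
\end{itemize}
This gives $1+2+2+4=9$ raw triples in $4$ orbits.

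For $(-1,1,1)$, note that $\rho_{23}$ fixes $(0,b,b)$ up to sign, since $(0,b,b)\mapsto(0,-b,-b)$. The orbits are:
\begin{itemize}
\item $\{(0,0,0)\}$;
\item $\{(0,\pm2,\pm2)\}$;
\item $\{(0,\pm18,\pm18)\}$;
\item $\{(\pm4,\mp2,\pm2)\}$, on which $\rho_{23}$ acts trivially.
\end{itemize}
This gives $1+2+2+2=7$ raw triples in $4$ orbits.

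Summing orbits gives $3+4+4+3=14$. I would finish by checking that each displayed representative lies in the correct orbit; the orbit sizes above already confirm that the orbits exhaust the raw counts.
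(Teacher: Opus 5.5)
Your proposal is correct and follows essentially the same route as the paper: you enumerate the raw triples in each of the four determinant patterns, then read off the orbits under the stabilizer of the pattern in $\Gamma$. That stabilizer is the full $S_3\times C_2$ for the two homogeneous patterns, $\langle\rho_{12},\iota\rangle$ for $(-1,-1,1)$, and $\langle\rho_{23},\iota\rangle$ for $(-1,1,1)$. Your explicit derivation of the induced signed-permutation action on $(a,b,c)$, and the orbit-size checks $1+2+2+4=9$ and $1+2+2+2=7$, simply spell out what the paper's proof leaves implicit.
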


\begin{proof}
For the all-negative pattern, write
$(\alpha,\beta,\gamma)=(a,b,-c)$.  Then
$\alpha+\beta+\gamma=0$ with all three entries in $S_-$.  The only
possibilities are $(0,0,0)$ and permutations of
$(s,-s,0)$ with $s\in\{4,14\}$.  For each value of $s$ there
are six permutations, so this gives $1+6+6=13$ raw triples before the
overall sign identification,
and the full induced $S_3$-action together with overall sign leaves the
three representatives displayed above.  The all-positive pattern is
identical with $S_+$ in place of $S_-$: the only possibilities are
$(0,0,0)$ and permutations of $(s,-s,0)$ with
$s\in\{2,18\}$; again there are $1+6+6=13$ raw triples and
three symmetry classes.

For $(-1,-1,1)$, the sums of two elements of $S_-$ that belong to
$S_+$ are only $0$ and $\pm18$.  If $c=0$, then $b=-a$, giving the five
raw triples with $a\in S_-$.  If $c=18$, the ordered pairs are
$(a,b)=(4,14),(14,4)$; if $c=-18$, they are their negatives.  Thus
there are $5+4=9$ raw triples.  Interchanging the first two variables
and applying the overall sign involution gives the four representatives
listed above.

Finally, for $(-1,1,1)$, the equation is $a=c-b$.  If $a=0$, then
$b=c\in S_+$, which gives five raw triples.  If $a=-4$, the only
possibility is $(b,c)=(2,-2)$, and if $a=4$ it is the negative triple;
no solution occurs for $a=\pm14$.  Hence there are $5+2=7$ raw triples.
The transformation
$\rho_{23}(X,Y,Z)=(X,-Z,-Y)$ identifies the two signs in the cases
$a=0$, while the overall sign identifies the two triples with
$|a|=4$.  This gives exactly the four representatives stated above.
\end{proof}

\begin{lemma}\label{lem:cubic-trace-check}
Let $X,Y,Z\in G$ satisfy $X^3+Y^3=Z^3$. Put
\[
\varepsilon_X=\det X,\qquad
\varepsilon_Y=\det Y,\qquad
\varepsilon_Z=\det Z,
\]
and let $x=\tr X$, $y=\tr Y$, and $z=\tr Z$. Define
\[
\begin{gathered}
a=x^3-3\varepsilon_Xx,\quad
b=y^3-3\varepsilon_Yy,\quad
c=z^3-3\varepsilon_Zz,\\[-1pt]
p=x^2-\varepsilon_X,\quad
q=y^2-\varepsilon_Y,\quad
r=z^2-\varepsilon_Z.
\end{gathered}
\]
If $pqr\ne0$ and $\Delta:=\det[X^3,Y^3]\ne0$, then $x=y=z=0$.
\end{lemma}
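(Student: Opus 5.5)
The plan is to reduce the cubic case to a finite check, following the pattern of Propositions~\ref{prop:three-negative-complete}--\ref{prop:all-positive} with the explicit degree-three Cayley--Hamilton data. For $n=3$ we have $U_3(t,\varepsilon)=t^2-\varepsilon$ and $U_2(t,\varepsilon)=t$. So $X^3=pX-\varepsilon_Xx\Id$, and similarly for $Y$ and $Z$. Hence $a,b,c$ are the traces of $A=X^3$, $B=Y^3$, $C=Z^3$, and $p,q,r$ are exactly the coefficients of Lemma~\ref{lem:comm-divisibility}. Since $pqr\ne0$, all three relations $p^2q^2\mid\Delta$, $p^2r^2\mid\Delta$, $q^2r^2\mid\Delta$ hold. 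Note also that $a=x(x^2-3\varepsilon_X)$ vanishes only for $x=0$, because $x^2=\pm3$ is impossible. So it suffices to prove $a=b=c=0$.

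\textbf{Formula for $\Delta$.} Lemma~\ref{lem:comm-det-formula} gives $\Delta$ in terms of $a,b,\tr(AB)$. From $\det C=\det A+\det B+ab-\tr(AB)$ we get $\tr(AB)=\varepsilon_X+\varepsilon_Y-\varepsilon_Z+ab$. Thus $\Delta$ is an explicit quadratic form in $(a,b)$ plus a constant of absolute value at most $5$. This form is one of those listed in Lemma~\ref{lem:three-trace-bound}, up to sign, which yields $|\Delta|\le3|uv|+5$ with $u,v$ the two largest traces.

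\textbf{Growth bound and reduction to finitely many traces.} I will prove a cubic replacement for Lemmas~\ref{lem:growth-minus} and~\ref{lem:growth-plus} by direct one-variable estimates on $p=x^2-\varepsilon$ and $|a|=|x|\,|x^2-3\varepsilon|$. The target is $p^2\ge2|a|+3$ whenever $|x|\ge3$, for either sign of the determinant. For $\varepsilon=1$, $x=3$ gives $64\ge39$; for $\varepsilon=-1$, $x=3$ gives $100\ge75$; and $p^2\sim x^4$ dominates $|a|\sim|x|^3$ beyond that. The exceptional small root traces are then $|x|\le2$:
\begin{itemize}
\item for $\varepsilon=-1$ they give $a\in\{0,\pm4,\pm14\}$;
\item for $\varepsilon=1$ they give $a\in\{0,\pm2\}$, since $x=\pm1$ is excluded by $p\ne0$.
\end{itemize}
If both of the two largest traces come from growth roots, then $p_u^2p_v^2>4|uv|>|\Delta|$, which contradicts $\Delta\ne0$ together with divisibility. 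So at least one of the two largest traces lies in the small list. By $c=a+b$ and $|u|\le2|v|$, every trace is then bounded, say by $28$ in absolute value.

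\textbf{Excluding the remaining growth values.} A growth trace $d$ of this bounded size must also satisfy the trace-discriminant identity $d^2-4\varepsilon=(x^2-4\varepsilon)p^2$ with $|x|=3$. I will then exclude each such value via divisibility against the small partner coefficients, except that $x=\pm3$ with $\varepsilon=1$ (trace $\pm18$) can survive this bound. This step is where I expect the main obstacle: the bookkeeping that confines $(a,b,c)$ to $S_{\varepsilon_X}\times S_{\varepsilon_Y}\times S_{\varepsilon_Z}$ needs care with the mixed cases, where one large trace is balanced by a small one. The conclusion of this step should be exactly the membership hypothesis of Lemma~\ref{lem:cubic-symmetry-list}.

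\textbf{Finite check via the symmetry list.} Using $\Gamma$ (which preserves the hypotheses and the conclusion), I reduce to the four determinant patterns and the $14$ representatives of Lemma~\ref{lem:cubic-symmetry-list}. For each of the $13$ nonzero representatives, I recover $|x|,|y|,|z|$, hence $p^2,q^2,r^2$. I then evaluate $\Delta$ from the quadratic-form formula and check that either $\Delta=0$ (excluded by hypothesis) or one of the three divisibilities fails. For example, for $(-4,0,-4)$ in the all-negative pattern, $p^2r^2=16$ must divide $\Delta=a^2+ab+b^2+3=19$, which fails. The only survivor is $(0,0,0)$, that is, $x=y=z=0$.
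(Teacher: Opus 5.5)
Your proposal is correct and follows essentially the paper's route: a growth estimate on the cubic Cayley--Hamilton coefficients forces $|v|\le14$ and $|u|\le28$, hence $(a,b,c)\in S_{\varepsilon_X}\times S_{\varepsilon_Y}\times S_{\varepsilon_Z}$, after which the fourteen $\Gamma$-representatives of Lemma~\ref{lem:cubic-symmetry-list} are eliminated by the divisibility test. The paper differs only in details: its growth step uses $\sigma^2\ge\frac{25}{9}|u|$ for $|v|\ge18$ instead of your $p^2\ge2|a|+3$, and its table uses only $p^2q^2\mid\Delta$. The step you flag as the main obstacle is in fact immediate: once $|a|,|b|,|c|\le28$, monotonicity of $|x|\,|x^2-3\varepsilon|$ leaves only the root traces with $|x|\le2$ and the case $(|x|,\varepsilon)=(3,1)$, so no further divisibility argument is needed there.
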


\begin{proof}
The trace relation gives $c=a+b$, and
Lemma~\ref{lem:comm-divisibility} gives
\[
p^2q^2\mid\Delta,\qquad
p^2r^2\mid\Delta,\qquad
q^2r^2\mid\Delta.
\]
Let $u,v$ be the two elements of $a,b,c$ of largest absolute value, and
write $\sigma_u,\sigma_v$ for the corresponding two coefficients among
$p,q,r$.
Suppose that $|v|\ge18$. Then the corresponding root traces have absolute
value at least $3$, and the cubic formulas give
$\sigma_u^2\ge\frac{25}{9}|u|$ and
$\sigma_v^2\ge\frac{25}{9}|v|$. We verify the first estimate; the second
is identical. If the corresponding root matrix has determinant $1$ and
root-trace absolute value $t\ge3$, then
$|u|=t(t^2-3)$ and $\sigma_u^2=(t^2-1)^2$, while
\[
\begin{aligned}
9(t^2-1)^2-25t(t^2-3)
={}&9(t-3)^4+83(t-3)^3\\
&+243(t-3)^2+264(t-3)+126>0.
\end{aligned}
\]
If the corresponding root matrix has determinant $-1$, then
$|u|=t(t^2+3)$ and $\sigma_u^2=(t^2+1)^2$, and
\[
9(t^2+1)^2-25t(t^2+3)
=(t-3)(9t^3+2t^2+24t-3)\ge0.
\]
Thus $\sigma_u^2\ge\frac{25}{9}|u|$ in both determinant cases, and
likewise for $v$. Hence
$\sigma_u^2\sigma_v^2\ge\frac{625}{81}|uv|$. On the other hand, for
each determinant-sign pattern, $\Delta$ is one of the quadratic forms in
Lemma~\ref{lem:three-trace-bound}, plus a constant $\pm3$ or $\pm5$.
Therefore
\[
|\Delta|\le3|uv|+5<\frac{625}{81}|uv|,
\]
because $|uv|\ge18^2$. This contradicts the relevant pairwise
divisibility relation.

Hence $|v|<18$. The nonzero absolute values of cubic traces are
$2,18,52,\ldots$ in the positive-determinant case and
$4,14,36,\ldots$ in the negative-determinant case. Therefore
$|v|\le14$ and $|u|\le28$, so
\[
\begin{aligned}
|x|,|y|,|z|&\le3
&&\text{on positive-determinant components},\\
|x|,|y|,|z|&\le2
&&\text{on negative-determinant components}.
\end{aligned}
\]
Since $pqr\ne0$, the values $x=\pm1$ are excluded on a
positive-determinant component. Thus every remaining component belongs
to one of the following two five-element lists:
\begin{equation}\label{eq:cubic-small-lists}
\begin{aligned}
\varepsilon=-1:&\quad
(x,a,|p|)\in
\{(0,0,1),(\pm1,\pm4,2),(\pm2,\pm14,5)\},\\
\varepsilon=1:&\quad
(x,a,|p|)\in
\{(0,0,1),(\pm2,\pm2,3),(\pm3,\pm18,8)\}.
\end{aligned}
\end{equation}
Here the signs within each triple are linked.

We now use only the symmetries already defined in
Section~\ref{sec:introduction}. The system consisting of $c=a+b$ and the
three pairwise divisibility conditions is invariant under the induced
$\Gamma$-action: determinant signs are permuted, the overall sign
involution sends $(a,b,c)$ to $(-a,-b,-c)$, and the three divisibility
conditions are merely permuted. Hence it is enough to consider the four
determinant patterns
\[
(-1,-1,-1),\qquad (-1,-1,1),\qquad (-1,1,1),\qquad (1,1,1).
\]
Within these four patterns we may also use the stabilizer of the pattern
in $\Gamma$: for the first and fourth patterns the whole induced
$S_3$-action is available; for $(-1,-1,1)$ we may interchange the first
two variables; and for $(-1,1,1)$ we may use
$\rho_{23}(X,Y,Z)=(X,-Z,-Y)$.

For $A=X^3$, $B=Y^3$, and $C=A+B=Z^3$, the identity
\[
\det(A+B)=\det A+\det B+\tr A\tr B-\tr(AB)
\]
gives
\[
\tr(AB)=\varepsilon_X+\varepsilon_Y+ab-\varepsilon_Z.
\]
Substitution into Lemma~\ref{lem:comm-det-formula} yields, for the four
canonical sign patterns,
\[
\begin{array}{c|c}
(\varepsilon_X,\varepsilon_Y,\varepsilon_Z)&\Delta\\
\hline
(-1,-1,-1)&a^2+ab+b^2+3\\
(-1,-1,1)&a^2+3ab+b^2-5\\
(-1,1,1)&-a^2+ab+b^2-5\\
(1,1,1)&3-a^2-ab-b^2.
\end{array}
\]

Lemma~\ref{lem:cubic-symmetry-list} proves, without any exhaustive
machine enumeration, that the remaining raw trace triples number
$13,9,7,13$ in the four determinant patterns and reduce to exactly
$3,4,4,3$ symmetry classes.  For those fourteen representatives, the
linked root traces in \eqref{eq:cubic-small-lists} determine
$p^2q^2$, and the preceding formulas determine $\Delta$.  The resulting
hand-checkable table is
\[
\begin{array}{c|c|r|r}
(\varepsilon_X,\varepsilon_Y,\varepsilon_Z)
&(a,b,c)&p^2q^2&\Delta\\
\hline
(-1,-1,-1)&(-14,0,-14)&25&199\\
           &(-4,0,-4)&4&19\\
           &(0,0,0)&1&3\\[1pt]
(-1,-1,1)&(-14,-4,-18)&100&375\\
          &(-14,14,0)&625&-201\\
          &(-4,4,0)&16&-21\\
          &(0,0,0)&1&-5\\[1pt]
(-1,1,1)&(-4,2,-2)&36&-25\\
         &(0,-18,-18)&64&319\\
         &(0,-2,-2)&9&-1\\
         &(0,0,0)&1&-5\\[1pt]
(1,1,1)&(-18,0,-18)&64&-321\\
       &(-2,0,-2)&9&-1\\
       &(0,0,0)&1&3
\end{array}
\]

Every nonzero row in the table violates the necessary condition
$p^2q^2\mid\Delta$. Thus only $(a,b,c)=(0,0,0)$ remains. By
\eqref{eq:cubic-small-lists}, the value $a=0$ (and similarly $b=0$ or
$c=0$) with the corresponding coefficient nonzero occurs only for root
trace $0$. Hence $x=y=z=0$.
\end{proof}

\begin{theorem}\label{thm:cubic-classification}
Every solution of $X^3+Y^3=Z^3$ satisfies
$\tr X=\tr Y=\tr Z=0$ and is a noncommuting torsion solution.  Moreover,
\begin{equation}\label{eq:cubic-orbit-decomposition}
\mathcal F_3
=
\mathcal F_3^{\mathrm{nc}}
=
\mathop{\bigsqcup}\limits_{\mathbf X\in
\mathscr R_3^{\mathrm{nc}}}
\mathcal O_G(\mathbf X),
\end{equation}
where $|\mathscr R_3^{\mathrm{nc}}|=14$.  Under the coarser equivalence
generated by $\Gamma$ and simultaneous conjugation, these fourteen
orbits form the three classes represented by
$\operatorname{rt}_3(\mathbf A)$,
$\operatorname{rt}_3(\mathbf B)$, and
$\operatorname{rt}_3(\mathbf C_4)$.
\end{theorem}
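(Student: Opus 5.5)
The plan is to reduce every solution of $X^3+Y^3=Z^3$ to the torsion classification of Subsection~\ref{sec:finite-orbits} by way of Lemma~\ref{lem:cubic-trace-check}, and then recover the roots by counting. First I will rule out commuting solutions, so that $\mathcal F_3=\mathcal F_3^{\mathrm{nc}}$. Theorem~\ref{thm:commuting-solutions} already gives $\mathcal F_3^{\mathrm{com}}=\varnothing$ for odd $n$ with $3\mid n$, and its proof does not use $n\ge5$. So let $(X,Y,Z)$ be noncommuting, and put $A=X^3$, $B=Y^3$, $C=Z^3$. Lemma~\ref{lem:power-commuting-lift} assumes $n\ge5$, so I will redo its argument for $n=3$.

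If $[A,B]=0$ and $A,B,C$ are all nonscalar, the centralizer argument goes through verbatim and shows that $X,Y,Z$ commute. If one of them is scalar, say $A=\sigma\Id$ after applying $\Gamma$, then the trace computation gives $\tr B=\pm1$ with $\det B=1$, so $B$ is torsion of order $3$ or $6$. As observed in that proof, a cube of a torsion matrix is never of order $3$ or $6$, and if $B$ is torsion then so is $Y$. Hence this branch cannot occur.

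So $[A,B]\ne0$, and therefore $pqr\ne0$, because $[A,B]=pq[X,Y]$ and so on, with $p=x^2-\varepsilon_X$ the Cayley--Hamilton coefficient for $n=3$. It remains to get $\Delta\ne0$.
\begin{itemize}
\item With all determinants $+1$, Lemma~\ref{lem:SL2-singular-commutator} gives $\Delta\ne0$.
\item With all determinants $-1$, we have $\Delta=a^2+ab+b^2+3>0$.
\item The mixed patterns are covered by Lemma~\ref{lem:mixed-delta-nonzero}, which is stated for odd $n\ge3$. It requires $A,B,C$ nonscalar, which we have just established: a scalar component forces commutation, as in the first step.
\end{itemize}
Lemma~\ref{lem:cubic-trace-check} now gives $x=y=z=0$. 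By Cayley--Hamilton, each of $X,Y,Z$ is then an involution (determinant $-1$) or satisfies $M^2=-\Id$ (determinant $1$). In particular $X^3=\pm X$, and $X$ is torsion.

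Next, $(A,B,C)\in\mathcal N_{\mathrm{tor}}$, so by Proposition~\ref{prop:ordered-torsion-orbits} it lies in exactly one of the $20$ orbits $\mathcal O_G(\mathbf T)$ with $\mathbf T\in\mathscr S_{\mathrm{ord}}$. Since every component of $(A,B,C)$ has trace $0$, the six orbits of the $\mathbf C_3$-class are excluded, because those components have orders $3$ or $6$. That leaves $14$ orbits, coming from the $\mathbf A$, $\mathbf B$ and $\mathbf C_4$ classes.

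Conversely, every component of these triples is an involution or squares to $-\Id$. Its unique cube root is $T^{e}$ with $3e\equiv1\pmod{\ord T}$: the root is $T$ itself for an involution and $T^3=-T$ for order $4$. Uniqueness is Lemma~\ref{lem:torsion-root-centralizer}, which applies because any cube root is torsion of trace $0$ and hence commutes with $T$. It follows that $\operatorname{rt}_3$ is a conjugation-equivariant bijection between the torsion power-orbits and the root-orbits. This gives both the disjoint decomposition \eqref{eq:cubic-orbit-decomposition} with $|\mathscr R_3^{\mathrm{nc}}|=14$, and the collapse to the three coarse classes inherited from Theorem~\ref{thm:finite-orbits}.

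The main obstacle is the first step: adapting Lemma~\ref{lem:power-commuting-lift} to $n=3$ and making sure that $\Delta\ne0$ holds in every determinant pattern, so that Lemma~\ref{lem:cubic-trace-check} genuinely applies. This is delicate because the growth lemmas used there assume $n\ge5$. I would first check whether the scalar branch can be handled purely through the torsion-order argument at the end of that proof, which avoids the growth estimates altogether.
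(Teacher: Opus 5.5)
Your overall route matches the paper's: exclude scalar cubes and commuting solutions, obtain $pqr\ne0$ and $\Delta\ne0$, apply Lemma~\ref{lem:cubic-trace-check}, then use Proposition~\ref{prop:ordered-torsion-orbits} and root recovery. However, the step you flag as the main obstacle is not actually closed.

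\textbf{The gap: the scalar branch.} Take $A=\sigma\Id$. The identity $\det(B+\sigma\Id)=\det B+\sigma\tr B+1$ gives only $\tr B=\sigma(\det C-\det B-1)\in\{\pm1,\pm3\}$, and it leaves $\det B$ unrestricted. Three cases remain:
\begin{enumerate}
\item $\det B=-1$ and $\tr B=\pm1$;
\item $\det B=1$ and $\tr B=\pm1$;
\item $\det B=1$ and $\tr B=\pm3$.
\end{enumerate}
In Lemma~\ref{lem:power-commuting-lift}, cases (1) and (3) are eliminated by the growth Lemmas~\ref{lem:growth-minus} and~\ref{lem:growth-plus}, which require $n\ge5$. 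Your sentence ``the trace computation gives $\tr B=\pm1$ with $\det B=1$'' silently assumes that elimination.

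The torsion-order argument you propose as a substitute cannot do this job. In cases (1) and (3), $B$ has characteristic polynomial $x^2\mp x-1$ or $x^2\mp3x+1$. So $B$ is not torsion, and the fact that cubes of torsion matrices never have order $3$ or $6$ says nothing about it.

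This matters beyond the lemma itself. Excluding scalar cubes is exactly what you need to obtain $[A,B]\ne0$, and hence $pqr\ne0$. Without it, $X$ could be nonscalar of order $3$ or $6$ with $p=0$. It is also what you need for the nonscalar hypothesis of Lemma~\ref{lem:mixed-delta-nonzero}. So everything downstream, including the application of Lemma~\ref{lem:cubic-trace-check}, is unsupported until this is repaired.

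\textbf{The missing idea: parity.} This is what the paper uses. By Cayley--Hamilton, $\tr(M^3)=t^3-3\varepsilon t=t(t^2-3\varepsilon)$ with $t=\tr M$, $\varepsilon=\det M$, and this is even for every $M\in G$. On the other hand, $\sigma(\det C-\det B-1)$ is odd. Hence no cube occurring in a solution can be scalar, with no case analysis and no growth estimates. Once this is in place, the rest of your proposal goes through as in the paper.

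\textbf{A smaller citation point.} The uniqueness clause of Lemma~\ref{lem:torsion-root-centralizer} assumes $3\nmid m$, so it does not apply to cube roots. Cite Lemma~\ref{lem:finite-root-recovery} instead. Alternatively, use what you have already established: $x=y=z=0$ forces $X=X^3$ when $\det X=-1$ and $X=-X^3$ when $\det X=1$, and likewise for $Y$ and $Z$.
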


\begin{proof}
Put
$\varepsilon_X=\det X$, $\varepsilon_Y=\det Y$,
$\varepsilon_Z=\det Z$, and
$x=\tr X$, $y=\tr Y$, $z=\tr Z$. By the Cayley--Hamilton theorem,
$X^3=(x^2-\varepsilon_X)X-\varepsilon_Xx\Id$, with analogous formulas
for $Y,Z$.

We first show that none of $X^3,Y^3,Z^3$ is scalar. Suppose one of
them is scalar. After applying an element of $\Gamma$, assume that $X^3=k\Id$ with
$k\in\mathbb Z$. Since $X\in\GL_2(\mathbb Z)$,
$1=|\det(X^3)|=k^2$,
so $k=\pm1$. Write $X^3=\sigma\Id$ with
$\sigma\in\{\pm1\}$. From $Z^3=Y^3+\sigma\Id$ and the determinant
formula we obtain
\[
\tr(Y^3)=\sigma\bigl(\det(Z^3)-\det(Y^3)-1\bigr).
\]
The right-hand side is odd. But the trace $t^3-3\varepsilon t$ of the
cube of any integral $2\times2$ matrix is always even, a contradiction.
Thus all three cubes are nonscalar.

If $p=x^2-\varepsilon_X=0$, the Cayley--Hamilton expansion gives
$X^3=\mp\Id$, contradicting the preceding paragraph. Thus $p,q,r$ are
all nonzero.

Suppose that $[X^3,Y^3]=0$, and set $A=X^3$, $B=Y^3$, and
$C=Z^3=A+B$. Then $A,B,C$ commute pairwise and, as proved above, are all
nonscalar. By Lemma~\ref{lem:nonscalar-centralizer},
$C_{M_2(\mathbb Q)}(A)=\mathbb Q[A]$. Thus $B,C\in\mathbb Q[A]$. Since
$B,C$ are also nonscalar, Lemma~\ref{lem:nonscalar-centralizer} gives
$\mathbb Q[B]=\mathbb Q[A]=\mathbb Q[C]$.
On the other hand, $X$ commutes with $A=X^3$, $Y$ with $B=Y^3$, and
$Z$ with $C=Z^3$, so
\[
X\in\mathbb Q[A],\qquad
Y\in\mathbb Q[B]=\mathbb Q[A],\qquad
Z\in\mathbb Q[C]=\mathbb Q[A].
\]
Hence $X,Y,Z$ all lie in the commutative algebra $\mathbb Q[A]$ and
commute pairwise. This contradicts
Theorem~\ref{thm:commuting-solutions}, which gives no commuting solution
when $3\mid n$. Therefore $[X^3,Y^3]\ne0$.

We next show that $\Delta=\det[X^3,Y^3]\ne0$. If all three
determinants are $-1$, then $\Delta=a^2+ab+b^2+3>0$. For a mixed
determinant-sign pattern, apply Lemma~\ref{lem:mixed-delta-nonzero} after
applying an element of $\Gamma$. This does not change the conclusion for
the original triple: under every element of $\Gamma$, the
commutator of the new first two components is one of
$\pm[A,B]$, $\pm[A,C]$, or $\pm[B,C]$, and these matrices all have the
same determinant. Finally, suppose all three determinants are $1$. Then $A=X^3$ and $B=Y^3$ belong to $\SL_2(\mathbb Z)$. We have
already proved that $[A,B]\ne0$, so the contrapositive of
Lemma~\ref{lem:SL2-singular-commutator} gives
$\Delta=\det[A,B]\ne0$. Thus $\Delta\ne0$ for all eight determinant-sign
patterns.

All three commutator-divisibility relations are now available.
Lemma~\ref{lem:cubic-trace-check} gives $x=y=z=0$.  Hence
$X^2=-\det(X)\Id$, $Y^2=-\det(Y)\Id$, and
$Z^2=-\det(Z)\Id$, so all three matrices have finite order.
Proposition~\ref{prop:ordered-torsion-orbits} places the ordered power
triple in a unique orbit represented by
$\mathscr S_{\mathrm{ord}}$.  A cube cannot be a nonscalar matrix of
order $3$ or $6$, so the six representatives arising from the
$\mathbf C_3$-class are excluded.  The remaining $2+6+6=14$ ordered
power orbits contain only components of order $2$ or $4$.
Lemma~\ref{lem:finite-root-recovery} gives a unique cube root for each
component; equivalently, the componentwise cube-root map carries these
fourteen power-orbit representatives bijectively to
$\mathscr R_3^{\mathrm{nc}}$.  Taking cubes preserves simultaneous
conjugacy, so no two of the resulting root triples can become conjugate.
This proves the disjoint decomposition
\eqref{eq:cubic-orbit-decomposition}.
\end{proof}

\subsubsection{Root recovery and completion of the odd classification}

\begin{lemma}\label{lem:finite-root-recovery}
Let $A\in\GL_2(\mathbb Z)$ be a nonscalar torsion matrix of order
$d\in\{2,3,4,6\}$. Let $n$ be odd and suppose $\gcd(n,d)=1$. If
$T\in\GL_2(\mathbb Z)$ satisfies $T^n=A$, then
$T=A^e$, where $ne\equiv1\pmod d$.

In particular, when $d=2$ and $3\mid n$, the abstract order formula
appears to allow $\ord(T)=6$. However, every integral matrix of order $6$
satisfies $T^3=-\Id$, so its $n$th power is $-\Id$ and cannot equal the
nonscalar involution $A$. Thus in this case as well, $T=A$.
\end{lemma}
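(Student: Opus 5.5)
The plan is to reduce everything to the statement that $T$ has finite order coprime to $n$, and then read off $T$ inside the cyclic group it generates. First, from $T^n=A$ and $A^d=\Id$ we get $T^{nd}=\Id$, so $T$ is torsion. By Lemma~\ref{lem:finite-orders}, $D:=\ord(T)\in\{1,2,3,4,6\}$. Since $A=T^n$ is nonscalar, $T$ is nonscalar, so $D\neq1$; moreover $d=\ord(T^n)=D/\gcd(D,n)$.

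\textbf{Case $\gcd(n,D)=1$.} Here $d=D$, and the cyclic groups $\langle T\rangle$ and $\langle A\rangle=\langle T^n\rangle$ coincide. Choosing $e$ with $ne\equiv1\pmod d$ gives $A^e=T^{ne}=T$, which is the claim. This is the argument of Lemma~\ref{lem:torsion-root-centralizer}, but without the blanket hypothesis $3\nmid n$.

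\textbf{Case $\gcd(n,D)>1$.} This is the main obstacle: one must show it cannot occur. Since $n$ is odd, a common factor of $n$ and $D\in\{2,3,4,6\}$ forces $3\mid n$ and $D\in\{3,6\}$. Then:
\begin{itemize}
\item If $D=3$, then $T^n=\Id$, which is scalar. This contradicts the assumption that $A$ is nonscalar.
\item If $D=6$, Lemma~\ref{lem:finite-orders} gives $T^3=-\Id$. Hence $T^n=(T^3)^{n/3}=-\Id$, since $n/3$ is odd, and this is again scalar. This is the remark in the second paragraph of the statement: for $d=2$ the abstract formula $d=D/\gcd(D,n)=6/3$ would permit $D=6$, but this is excluded because $T^n=-\Id$ rather than a nonscalar involution.
\end{itemize}

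So $\gcd(n,D)=1$ always holds, and the first case applies; in particular the hypothesis $\gcd(n,d)=1$ is used only to guarantee that $e$ exists. For the involution case $d=2$ we get $e=1$ and $T=A$, consistent with the convention $e_i=1$ in \eqref{eq:rt-definition}. The uniqueness of the residue class $e$ modulo $d$ gives the uniqueness of the root asserted after \eqref{eq:rt-definition}.
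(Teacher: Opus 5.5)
Your proof is correct and takes essentially the paper's route: $T$ is torsion, the order formula $d=D/\gcd(D,n)$ holds, $\langle T\rangle=\langle A\rangle$ when $\gcd(n,D)=1$, and $D\in\{3,6\}$ is excluded because $T^n$ would then be the scalar $\Id$ or $-\Id$ (via $T^3=-\Id$). The only difference is organizational: you split on $\gcd(n,D)$ rather than on $3\mid n$ and the value of $d$, which makes the argument self-contained and shows that $\gcd(n,d)=\gcd(n,D)=1$ holds automatically once such a root $T$ exists.
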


\begin{proof}
By the first part of Lemma~\ref{lem:torsion-root-centralizer}, $T$ has
finite order and commutes with $A$. Put $D=\ord(T)$. Then
$d=\ord(T^n)=D/\gcd(D,n)$, where $D\in\{1,2,3,4,6\}$.

If $3\nmid n$, Lemma~\ref{lem:torsion-root-centralizer} already gives
$T=A^e$ with $ne\equiv1\pmod d$. It remains only to consider $3\mid n$.
Since $\gcd(n,d)=1$, we must then have $d\in\{2,4\}$.

If $d=4$, the displayed order formula and
$D\in\{1,2,3,4,6\}$ force $D=4$. Thus $\gcd(D,n)=1$, and
$A=T^n$ and $T$ generate the same cyclic group of order $4$. Choosing
$ne\equiv1\pmod4$ yields $T=A^e$.

If $d=2$, formally $D=2$ or $D=6$. In the latter case,
Lemma~\ref{lem:finite-orders} gives $T^3=-\Id$. Since $n$ is odd and
$3\mid n$, we get $T^n=-\Id$, which cannot equal the nonscalar
involution $A$. Hence $D=2$, so $T^n=T=A$.
\end{proof}

Let \(\mathbf T=(A,B,C)\in\mathscr S_{\mathrm{ord}}\), and assume that
each component order is coprime to the odd integer \(n\), with the
order-two case interpreted as in Lemma~\ref{lem:finite-root-recovery}.
If the component orders are \(d_A,d_B,d_C\), let
\(e_A,e_B,e_C\) satisfy
\[
ne_A\equiv1\pmod{d_A},\qquad
ne_B\equiv1\pmod{d_B},\qquad
ne_C\equiv1\pmod{d_C}.
\]
Then all roots of the orbit \(\mathcal O_G(\mathbf T)\) are exactly
\begin{equation}\label{eq:root-recovery}
X=P A^{e_A}P^{-1},
\qquad
Y=P B^{e_B}P^{-1},
\qquad
Z=P C^{e_C}P^{-1},
\qquad P\in G.
\end{equation}
Equivalently, they form the single orbit
\(\mathcal O_G(\operatorname{rt}_n(\mathbf T))\).

\begin{lemma}
\label{lem:root-orbit-preservation}
Let \(\mathbf T,\mathbf T'\in\mathscr S_{\mathrm{ord}}\) be admissible
for the odd exponent \(n\).  Then
\[
\mathcal O_G(\operatorname{rt}_n(\mathbf T))
=
\mathcal O_G(\operatorname{rt}_n(\mathbf T'))
\]
if and only if
\[
\mathcal O_G(\mathbf T)=\mathcal O_G(\mathbf T').
\]
\end{lemma}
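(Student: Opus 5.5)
The plan is to prove the two implications separately. The key observation is that the componentwise $n$th-power map and the root map $\operatorname{rt}_n$ are mutually inverse on admissible triples, and both commute with simultaneous conjugation.

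First, I will check that $\operatorname{rt}_n(\mathbf T)^{\,n}=\mathbf T$ componentwise, where the $n$th power of a triple is taken componentwise. Each component $T_i$ has order $d_i$, and $e_i$ satisfies $ne_i\equiv1\pmod{d_i}$. Hence $(T_i^{e_i})^n=T_i^{ne_i}=T_i$. For the order-two components with $e_i=1$ this holds because $n$ is odd.

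For the implication $(\Leftarrow)$, suppose $\mathbf T'=P\mathbf TP^{-1}$ for some $P\in G$. Conjugation commutes with taking powers, so
\[
P\,T_i^{e_i}P^{-1}=(PT_iP^{-1})^{e_i}=(T_i')^{e_i}.
\]
The exponents $e_i$ depend only on $n$ and on the orders $d_i$, and conjugation preserves order. Therefore $P\operatorname{rt}_n(\mathbf T)P^{-1}=\operatorname{rt}_n(\mathbf T')$, and the two root orbits coincide. I will note that the same exponent $e_i$ serves at a given position for both triples. If one insists on the residue class $e_i$ modulo $d_i$, then any representative gives the same matrix, since $T_i^{d_i}=\Id$.

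For the implication $(\Rightarrow)$, suppose $P\operatorname{rt}_n(\mathbf T)P^{-1}=\operatorname{rt}_n(\mathbf T')$. Raising every component to the $n$th power and using the first step gives
\[
P\mathbf TP^{-1}=\bigl(P\operatorname{rt}_n(\mathbf T)P^{-1}\bigr)^n=\operatorname{rt}_n(\mathbf T')^{\,n}=\mathbf T'.
\]
Hence $\mathcal O_G(\mathbf T)=\mathcal O_G(\mathbf T')$.

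Uniqueness of roots is not actually needed here. It enters only to identify the full fibre of the power map over an orbit with a single root orbit, which is the content of Lemma~\ref{lem:finite-root-recovery} and of \eqref{eq:root-recovery}.

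The only point requiring care is admissibility. For $3\mid n$ the $\mathbf C_3$-class is excluded, so every remaining component has order $2$ or $4$, and $\operatorname{rt}_n$ is defined with $e_i=1$ or $ne_i\equiv1\pmod4$. I do not expect a genuine obstacle. The main thing to guard against is conflating the residue class $e_i$ with a chosen integer representative, and fixing integer representatives position by position disposes of this.
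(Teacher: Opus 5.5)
Your proof is correct. The forward direction, raising to $n$th powers using $\operatorname{rt}_n(\mathbf T)^n=\mathbf T$, is the same as the paper's.

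The converse takes a different route. The paper notes that $P\operatorname{rt}_n(\mathbf T)P^{-1}$ is an integral $n$th root of $\mathbf T'=P\mathbf TP^{-1}$. It then invokes the componentwise uniqueness of Lemma~\ref{lem:finite-root-recovery} to conclude that this root equals $\operatorname{rt}_n(\mathbf T')$. You use only that $\operatorname{rt}_n$ is given componentwise by $T_i\mapsto T_i^{e_i}$. Here $e_i$ depends only on $n$ and on $\ord(T_i)$, which is invariant under conjugation, so $\operatorname{rt}_n$ commutes with simultaneous conjugation by construction.

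Your route is more elementary. It also shows that this lemma, as an equivalence of orbits, needs no uniqueness of roots at all.

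The paper's route has two advantages. It would still work if $\operatorname{rt}_n$ were defined abstractly as ``the unique integral $n$th root'' rather than by an explicit exponent. It also reuses the uniqueness that is genuinely needed for \eqref{eq:root-recovery}, namely that every $n$th root of a triple in $\mathcal O_G(\mathbf T)$ lies in the single orbit $\mathcal O_G(\operatorname{rt}_n(\mathbf T))$. You correctly identify this completeness statement as the place where uniqueness enters.
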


\begin{proof}
If the root triples are conjugate, taking \(n\)th powers shows that the
power triples are conjugate.  Conversely, suppose
\(\mathbf T'=P\mathbf T P^{-1}\) componentwise.  The conjugate
\(P\operatorname{rt}_n(\mathbf T)P^{-1}\) is an integral \(n\)th root
of \(\mathbf T'\).  Componentwise uniqueness in
Lemma~\ref{lem:finite-root-recovery} forces it to equal
\(\operatorname{rt}_n(\mathbf T')\).
\end{proof}

Thus the componentwise root map carries the ordered power transversal
\(\mathscr S_{\mathrm{ord}}\) bijectively to an ordered root transversal.
It is also \(\Gamma\)-equivariant on the admissible torsion triples:
for every \(\gamma\in\Gamma\),
\[
\operatorname{rt}_n(\gamma\mathbf T)
=
\gamma\operatorname{rt}_n(\mathbf T).
\]
Indeed, the signed variable operations commute with taking odd powers,
and componentwise uniqueness in
Lemma~\ref{lem:finite-root-recovery} then gives the displayed identity.
Consequently, root recovery preserves not only the ordered
simultaneous-conjugacy splitting but also the four coarse
\(\Gamma\)-classes.

If \(3\nmid n\), every member of \(\mathscr S_{\mathrm{ord}}\) is
admissible, and its image is precisely the set
\(\mathscr R_n^{\mathrm{nc}}\) in
\eqref{eq:canonical-odd-reps}.  If \(3\mid n\), the six members arising
from the \(\mathbf C_3\)-class contain components of order \(3\) or
\(6\) and cannot be \(n\)th powers; the remaining fourteen members are
admissible and map to \(\mathscr R_n^{\mathrm{nc}}\).

\begin{theorem}
\label{thm:odd-classification}
Let \(n\ge3\) be odd.
\begin{enumerate}[label=\textup{(\roman*)}]
\item If \(3\nmid n\), then
\[
\mathcal F_n^{\mathrm{com}}
=
\mathop{\bigsqcup}\limits_{s\in\mathbb Z/6\mathbb Z}
\mathcal O_G(Q^{s+e},Q^{s-e},Q^s),
\qquad ne\equiv1\pmod6,
\]
and
\[
\mathcal F_n^{\mathrm{nc}}
=
\mathop{\bigsqcup}\limits_{\mathbf X\in
\mathscr R_n^{\mathrm{nc}}}
\mathcal O_G(\mathbf X).
\]
Hence \(\mathcal F_n\) consists of six commuting and twenty
noncommuting simultaneous-conjugacy orbits.

\item If \(3\mid n\), then
\(\mathcal F_n^{\mathrm{com}}=\varnothing\) and
\[
\mathcal F_n
=
\mathcal F_n^{\mathrm{nc}}
=
\mathop{\bigsqcup}\limits_{\mathbf X\in
\mathscr R_n^{\mathrm{nc}}}
\mathcal O_G(\mathbf X),
\]
where \(|\mathscr R_n^{\mathrm{nc}}|=14\).
\end{enumerate}
\end{theorem}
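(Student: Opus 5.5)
The theorem is an assembly of results already proved, so the proof splits by $n=3$ versus $n\ge5$, and within each case treats the commuting part and the noncommuting part separately.

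\textbf{Commuting part.} For every odd $n\ge3$, Theorem~\ref{thm:commuting-solutions} gives directly:
\begin{itemize}
\item the six-orbit decomposition of $\mathcal F_n^{\mathrm{com}}$ when $3\nmid n$;
\item $\mathcal F_n^{\mathrm{com}}=\varnothing$ when $3\mid n$.
\end{itemize}
Since $\mathcal F_n=\mathcal F_n^{\mathrm{com}}\sqcup\mathcal F_n^{\mathrm{nc}}$ and both pieces are unions of $G$-orbits, what remains is the noncommuting decomposition and its orbit count.

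\textbf{Noncommuting part, $n=3$.} Here $3\mid n$, and everything is contained in Theorem~\ref{thm:cubic-classification}. It gives $\mathcal F_3=\mathcal F_3^{\mathrm{nc}}$ as the disjoint union over the $14$ triples of $\mathscr R_3^{\mathrm{nc}}$.

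\textbf{Noncommuting part, $n\ge5$ odd.} I would proceed in three steps.
\begin{enumerate}
\item \emph{Reduce to a power orbit.} Take $(X,Y,Z)\in\mathcal F_n^{\mathrm{nc}}$. Theorem~\ref{thm:odd-finite-order} shows that $X,Y,Z$ are torsion. It also shows that the power triple $(X^n,Y^n,Z^n)$ lies in $\mathcal O_G(\mathbf T)$ for a unique $\mathbf T\in\mathscr S_{\mathrm{ord}}$.
\item \emph{Restrict to admissible $\mathbf T$.} The triple $\mathbf T$ must be admissible. If $3\mid n$, no $n$th power of an integral torsion matrix is nonscalar of order $3$ or $6$, as noted in the proof of Lemma~\ref{lem:power-commuting-lift}. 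This excludes the six $\mathbf C_3$-class members.
\item \emph{Recover the root triple.} Write $(X^n,Y^n,Z^n)=P\mathbf T P^{-1}$. Conjugating by $P^{-1}$, the triple $P^{-1}(X,Y,Z)P$ is a componentwise $n$th root of $\mathbf T$. Componentwise uniqueness in Lemma~\ref{lem:finite-root-recovery} forces it to equal $\operatorname{rt}_n(\mathbf T)$. Hence $(X,Y,Z)\in\mathcal O_G(\operatorname{rt}_n(\mathbf T))$.
\end{enumerate}

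\textbf{The reverse inclusion.} Each $\operatorname{rt}_n(\mathbf T)$ is a solution, since its $n$th power is $\mathbf T$ and $\mathbf T$ satisfies $A+B=C$. It is noncommuting, because commuting roots would have commuting powers, contradicting $[A,B]\neq0$.

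\textbf{Disjointness and the count.} Disjointness of the orbits indexed by the image set follows from Lemma~\ref{lem:root-orbit-preservation} together with the disjointness in Proposition~\ref{prop:ordered-torsion-orbits}. The image of the admissible part of $\mathscr S_{\mathrm{ord}}$ under $\operatorname{rt}_n$ is exactly $\mathscr R_n^{\mathrm{nc}}$, as recorded before the theorem. Its size is therefore $20$ if $3\nmid n$ and $14$ if $3\mid n$.

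The only potentially delicate point is bookkeeping. One must check that $\operatorname{rt}_n$ is injective on admissible representatives, so that the counts $20$ and $14$ are counts of distinct orbits rather than of listed triples. This is exactly what Lemma~\ref{lem:root-orbit-preservation} provides, so I expect no real obstacle.
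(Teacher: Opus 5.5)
Your proposal is correct and follows the paper's proof essentially step for step. The commuting part comes from Theorem~\ref{thm:commuting-solutions} and the cubic case from Theorem~\ref{thm:cubic-classification}; for $n\ge5$ you reduce via Theorem~\ref{thm:odd-finite-order}, exclude the $\mathbf C_3$-class when $3\mid n$, and recover roots with Lemmas~\ref{lem:finite-root-recovery} and~\ref{lem:root-orbit-preservation}. You only spell out the conjugate-back step and the reverse inclusion (each $\operatorname{rt}_n(\mathbf T)$ is a noncommuting solution), which the paper compresses into the remark that Lemma~\ref{lem:root-orbit-preservation} transfers completeness and disjointness.
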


\begin{proof}
The commuting assertions and the disjointness of their six orbits are
Theorem~\ref{thm:commuting-solutions}.  For \(n=3\), the noncommuting
statement is Theorem~\ref{thm:cubic-classification}.

Assume now that \(n\ge5\).  By
Theorem~\ref{thm:odd-finite-order}, the ordered power triple of every
noncommuting solution lies in a unique orbit represented by
\(\mathscr S_{\mathrm{ord}}\).  If \(3\nmid n\), all twenty
representatives are admissible.  If \(3\mid n\), exactly the six
representatives from the \(\mathbf C_3\)-class are inadmissible, since
an \(n\)th power of an integral torsion matrix cannot be a nonscalar
matrix of order \(3\) or \(6\); the remaining fourteen representatives
have component orders \(2\) or \(4\).  Lemma~\ref{lem:finite-root-recovery}
recovers every root component, and
Lemma~\ref{lem:root-orbit-preservation} transfers both completeness and
pairwise disjointness from the power orbits to the root orbits.  This
gives exactly the decompositions stated above.
\end{proof}

\subsection{Classification for admissible even exponents}
\label{sec:even-classification}

\begin{theorem}
\label{thm:even-classification}
Let \(n\ge3\) and \(n\equiv2,10\pmod{12}\).  Then
\begin{equation}\label{eq:even-equals-pythagorean}
\mathcal F_n=\mathcal P_+,
\end{equation}
and consequently
\begin{equation}\label{eq:even-orbit-decomposition}
\mathcal F_n
=
\mathop{\bigsqcup}\limits_{\epsilon_1,\epsilon_2\in\{\pm1\}}
\mathop{\bigsqcup}\limits_{T\in\mathcal T}
\mathcal O_G(-\epsilon_1R^2,-\epsilon_2R,T).
\end{equation}
Thus the admissible even Fermat equation has exactly the same ordered
solution set as the determinant-one matrix Pythagorean equation.
\end{theorem}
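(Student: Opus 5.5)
The plan is to reduce the even exponent to the odd exponent $m=n/2$, applied to the squares $X^2,Y^2,Z^2$, and then to read off the Pythagorean relation from torsion. Write $n=2m$. The hypothesis $n\equiv2,10\pmod{12}$ means that $m$ is odd and $3\nmid m$. Since $n\ge3$ and $m\ne3$, we have $m\ge5$.

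For the inclusion $\mathcal F_n\subseteq\mathcal P_+$, take $(X,Y,Z)\in\mathcal F_n$ and put $X_1=X^2$, $Y_1=Y^2$, $Z_1=Z^2\in\SL_2(\mathbb Z)$. Then $X_1^m+Y_1^m=Z_1^m$ with $m\ge5$ odd and all three determinants equal to $1$.
\begin{enumerate}
\item Proposition~\ref{prop:all-positive} shows that $(X_1,Y_1,Z_1)$ is pairwise commuting.
\item Theorem~\ref{thm:commuting-solutions}, which applies because $3\nmid m$, shows that $(X_1,Y_1,Z_1)$ is simultaneously conjugate to $(Q^{s+e},Q^{s-e},Q^s)$. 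Hence $X^{12}=Y^{12}=Z^{12}=\Id$, so $X,Y,Z$ are torsion and, as $n\equiv\pm2\pmod{12}$, $X^n=X^{\pm2}$, with the same sign for all three components.
\item If the sign is $+$, the equation is already $X^2+Y^2=Z^2$.
\item If the sign is $-$, we have $X^{-2}+Y^{-2}=Z^{-2}$. For $M\in\SL_2(\mathbb Z)$ one has $M^{-1}=(\tr M)\Id-M$ and $\tr M^{-1}=\tr M$. The trace relation for the inverses therefore gives $\tr X^2+\tr Y^2=\tr Z^2$. Subtracting then yields $X^2+Y^2-Z^2=0$.
\end{enumerate}

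It remains to show that $X,Y,Z\in\SL_2(\mathbb Z)$. Suppose $\det X=-1$. Since $X$ is torsion, Lemma~\ref{lem:finite-orders} gives $X^2=\Id$, so $Z^2=\Id+Y^2$. Taking determinants gives $\tr Y^2=-1$. Hence $Y^2$ has order $3$ and $Z^2=-Y^4$ has order $6$, so $Z$ would have order $12$, contradicting Lemma~\ref{lem:finite-orders}. The case $\det Y=-1$ is symmetric. If $\det Z=-1$, then $X^2=\Id-Y^2$ forces $\tr Y^2=1$. Thus $Y^2$ has order $6$ and $Y$ would have order $12$, again impossible. So $(X,Y,Z)\in\mathcal P_+$.

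For the inclusion $\mathcal P_+\subseteq\mathcal F_n$, use the decomposition \eqref{eq:Pplus-decomposition} from Proposition~\ref{prop:pythagorean-orbit-input}. Every element of $\mathcal P_+$ is conjugate to some $(-\epsilon_1R^2,-\epsilon_2R,T)$, whose components have orders dividing $12$. Hence every element of $\mathcal P_+$ consists of torsion matrices in $\SL_2(\mathbb Z)$ of order dividing $12$, and so $A^n=A^{\pm2}$ with a common sign. If the sign is $-$, the same adjugate-and-trace argument as above converts $A^2+B^2=C^2$ into $A^{-2}+B^{-2}=C^{-2}$. Hence $(A,B,C)\in\mathcal F_n$. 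This proves \eqref{eq:even-equals-pythagorean}, and \eqref{eq:even-orbit-decomposition} is then exactly Proposition~\ref{prop:pythagorean-orbit-input}.

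The main step to get right is the first one, the passage to the squares. One must check that Proposition~\ref{prop:all-positive} genuinely applies to $(X^2,Y^2,Z^2)$ at exponent $m\ge5$, and that the commuting classification needs $3\nmid m$, which is exactly the hypothesis $6\nmid n$. The determinant-sign elimination is the other delicate point, and it rests on the absence of elements of order $8$ and $12$ in $\GL_2(\mathbb Z)$.
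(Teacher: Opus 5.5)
Your proof is correct. After the common first step, it takes a different route from the paper's.

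Both arguments pass to the squares $X^2,Y^2,Z^2$ and apply Proposition~\ref{prop:all-positive} at the odd exponent $m=n/2\ge5$ to get pairwise commutation. Both also use Proposition~\ref{prop:pythagorean-orbit-input} for the reverse inclusion and for the final disjoint decomposition.

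The paper then proceeds in these steps:
\begin{enumerate}
\item It normalizes $A^{-1}B$ to $R$, so that $(A,B,C)=(A,AR,-AR^2)$.
\item It uses commutation to place $A$ in $C_G(R)=\langle Q\rangle$.
\item It eliminates five of the six candidates by an $n$th-power admissibility table, leaving $(X^n,Y^n,Z^n)\sim(R,R^2,-\Id)$ up to swapping the first two components.
\item It reads off the roots: Eisenstein powers for $X,Y$ and an order-four $Z$. This yields $X^2+Y^2=-\Id=Z^2$ and $\det=1$ at once.
\end{enumerate}

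You instead proceed as follows:
\begin{enumerate}
\item You invoke Theorem~\ref{thm:commuting-solutions} at exponent $m$, which is where $3\nmid m$ enters. You use it only to learn that $X^2,Y^2,Z^2$ have order dividing $6$, so every root has order dividing $12$.
\item Since $n\equiv\pm2\pmod{12}$, the equation literally becomes $X^{\pm2}+Y^{\pm2}=Z^{\pm2}$.
\item Linearity of $M\mapsto M^{-1}=(\tr M)\Id-M$ on $\SL_2(\mathbb Z)$ removes the sign.
\item The absence of elements of order $12$ in $\GL_2(\mathbb Z)$ excludes determinant $-1$.
\end{enumerate}

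Your version avoids the Eisenstein normalization and the admissibility table. It also makes transparent why the residues $\pm2$ modulo $12$ are exactly the right ones: on matrices of order dividing $12$, the $n$th-power equation and the quadratic equation coincide. The same observation gives the converse inclusion uniformly.

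The cost is reliance on the heavier commuting classification, whose proof uses the unit group of $\mathbb Q(\sqrt5)$, plus a separate determinant argument. Your route also does not exhibit the explicit power triple $(R,R^2,-\Id)$ that the paper's argument produces.
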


\begin{proof}
Write \(n=2m\).  Since \(n\ge3\) and
\(n\equiv2,10\pmod{12}\), we have \(m\ge5\), \(m\) odd, and
\(3\nmid m\).  Put
\[
A=X^n,\qquad B=Y^n,\qquad C=Z^n,
\qquad U=A^{-1}B.
\]
Then \(A,B,C\in\SL_2(\mathbb Z)\), \(A+B=C\),
\(\det U=1\), and \(\det(\Id+U)=1\).  Hence
\(\tr U=-1\) and \(U^2+U+\Id=0\).  After simultaneous integral
conjugation, we may assume \(U=R\).  Thus
\[
B=AR,
\qquad
C=A(\Id+R)=-AR^2.
\]

Set \(S=X^2\), \(T_0=Y^2\), and \(W=Z^2\).  Then
\[
S^m+T_0^m=W^m,
\qquad S,T_0,W\in\SL_2(\mathbb Z).
\]
By Proposition~\ref{prop:all-positive}, this odd-exponent equation is
commuting.  Hence \(A=S^m\) and \(B=T_0^m\) commute.  Since
\(B=AR\), we obtain
\[
0=[A,B]=[A,AR]=A[A,R],
\]
so \([A,R]=0\).  Therefore
\[
A\in C_G(R)=\langle Q\rangle
=\{\pm\Id,\pm R,\pm R^2\}.
\]

Substitution into \((A,B,C)=(A,AR,-AR^2)\) leaves, up to interchanging
the first two components, only
\begin{equation}\label{eq:even-power-triple}
(A,B,C)=(R,R^2,-\Id).
\end{equation}
To justify the power-admissibility test explicitly, note first that if a
torsion matrix $D$ satisfies $D=M^n$, then $M$ is itself torsion: if
$\ord(D)=d$, then $M^{nd}=\Id$.  Lemma~\ref{lem:finite-orders} therefore
reduces the test to root orders $1,2,3,4,6$.  Since
$n\equiv2\pmod4$ and $n\equiv2$ or $4\pmod6$, the only possible torsion
values of an $n$th power are $\Id$, $-\Id$, and nonscalar elements of
order $3$; the value $-\Id$ arises from a root of order $4$, whereas a
root of order $6$ has an $n$th power of order $3$.  In particular, no
nonscalar involution and no element of order $6$ can occur as an
$n$th power.  The remaining five values of $A$ force one component of
$(A,AR,-AR^2)$ to be inadmissible, exactly as recorded in the following
table:
\[
\begin{array}{c|c|c|c}
A&B&C&\text{Conclusion}\\
\hline
\Id&R&-R^2&-R^2\text{ is not an }n\text{th power}\\
-\Id&-R&R^2&-R\text{ is not an }n\text{th power}\\
R&R^2&-\Id&\text{admissible}\\
-R&-R^2&\Id&-R,-R^2\text{ are not admissible}\\
R^2&\Id&-R&-R\text{ is not admissible}\\
-R^2&-\Id&R&-R^2\text{ is not admissible}.
\end{array}
\]

From \eqref{eq:even-power-triple}, the roots \(X\) and \(Y\) lie in the
Eisenstein centralizer and \(Z\) has order \(4\).  More explicitly,
after restoring simultaneous conjugation,
\[
X=P Q^uP^{-1},
\qquad
Y=P Q^vP^{-1},
\qquad
Z=KJK^{-1},
\]
where
\[
nu\equiv2\pmod6,
\qquad
nv\equiv4\pmod6.
\]
The matrices \(P\) and \(K\) need not agree.  What matters for the
first two components is that the same matrix $P$ conjugates both roots.
Indeed, if $n\equiv2\pmod6$, then the congruences for $u$ and $v$ give
\[
X^2=PRP^{-1},\qquad Y^2=PR^2P^{-1};
\]
if $n\equiv4\pmod6$, the two right-hand sides are interchanged.
Consequently,
\[
X^2+Y^2=P(R+R^2)P^{-1}=-\Id.
\]
Since $Z$ has order $4$, Lemma~\ref{lem:finite-orders} also gives
$Z^2=-\Id$.  Hence $X^2+Y^2=Z^2$.
Moreover, \(X,Y,Z\) all have determinant \(1\).  Hence
\(\mathcal F_n\subseteq\mathcal P_+\).

Conversely, take a representative
\((-\epsilon_1R^2,-\epsilon_2R,T)\) in
\eqref{eq:Pplus-decomposition}.  Since
\(-R^2=Q\), \(-R=Q^5\), and \(n\) is even, the two signs disappear
on taking \(n\)th powers.  If \(n\equiv2\pmod6\), the first two
powers are \(R,R^2\); if \(n\equiv4\pmod6\), they are \(R^2,R\).
Thus their sum is always \(-\Id\).  Since \(T^2=-\Id\) and
\(n/2\) is odd,
\[
T^n=(T^2)^{n/2}=-\Id.
\]
Therefore every orbit in \(\mathcal P_+\) consists of solutions of the
\(n\)th-power equation, proving
\(\mathcal P_+\subseteq\mathcal F_n\).  This proves
\eqref{eq:even-equals-pythagorean}; the disjoint decomposition
\eqref{eq:even-orbit-decomposition} now follows from
Proposition~\ref{prop:pythagorean-orbit-input}.
\end{proof}

\subsection{Completion of the proof}

\begin{proof}[Proof of Theorem~\ref{thm:full-classification}]
Proposition~\ref{prop:no-4-divides} excludes every exponent divisible by
$4$: after passing to fourth powers, Proposition~\ref{prop:pythagorean-orbit-input}
forces a Gaussian component with square $-\Id$, and an $n$th root would
therefore produce an element of order $8$ in $G$, which is impossible.
Proposition~\ref{prop:no-6-divides} excludes every exponent divisible by
$6$.  Conversely, Proposition~\ref{prop:existence-construction} gives an
explicit solution whenever $4\nmid n$ and $6\nmid n$.  This proves
part~\textup{(i)}.

For even $n$, the two divisibility conditions are equivalent to
$n\equiv2$ or $10\pmod{12}$.  Theorem~\ref{thm:even-classification}
identifies the entire ordered solution set with the determinant-one
Pythagorean solution set and then applies the canonical transversal
$\mathcal T$.  This gives the disjoint decomposition in
part~\textup{(ii)}.

Now suppose that $n$ is odd.  Theorem~\ref{thm:odd-classification}
gives the complete ordered commuting and noncommuting decompositions for
every odd exponent $n\ge3$, and therefore proves
parts~\textup{(iii)} and~\textup{(iv)}.  In that theorem, the cubic case
$n=3$ is supplied by Theorem~\ref{thm:cubic-classification}, whereas
for $n\ge5$ the noncommuting classification follows from
Theorem~\ref{thm:odd-finite-order}, the ordered torsion-orbit
decomposition, and root recovery.
\end{proof}

\section{Conclusion}\label{sec:conclusion}

We have determined both the solvable exponents and the ordered
simultaneous-conjugacy orbits of the Fermat-type matrix equation
\eqref{eq:Fermat} over $\GL_2(\mathbb Z)$.  The argument keeps three
operations logically separate: the ordering of the variables, the
signed permutation symmetries of the odd equation, and simultaneous
integral conjugation.  This separation is reflected in the notation:
ordinary unions are used during symmetry reduction, whereas the final
classifications are genuine disjoint unions of $G$-orbits.

The equation is solvable exactly when $4\nmid n$ and $6\nmid n$.  Compared
with the determinant-one criterion of Qin \cite{Qin1996}, this shows
precisely where the full group contributes new solvable exponents: odd
multiples of $3$.  For an admissible even exponent, its full ordered
solution set is not merely
parametrized by independent Eisenstein and Gaussian conjugators; it is
exactly the determinant-one Pythagorean solution set
$\mathcal P_+$.  Consequently, the residual-orbit criterion and canonical transversal
$\mathcal T$ from the matrix Pythagorean equation give a nonredundant
disjoint orbit decomposition of every even solution; the precise companion
inputs are recorded in Remark~\ref{rem:companion-input}.

For odd exponents, the commuting and noncommuting parts have different
origins.  The commuting solutions are governed by the six units in the
Eisenstein order and form six disjoint $G$-orbits when $3\nmid n$; no
commuting solution exists when $3\mid n$.  Every noncommuting solution
has a torsion power triple.  After the coarse $\Gamma$-reduction to four
signed-permutation types, a further ordered orbit-splitting argument
gives twenty simultaneous-conjugacy orbits when $3\nmid n$.  If
$3\mid n$, the six orbits in the $\mathbf C_3$ family have no admissible
$n$th roots, leaving exactly fourteen.  Thus the corresponding coarse
counts are four and three, but the ordered orbit counts are twenty and
fourteen.  The cubic exponent is handled in the main text by the elementary
symmetry-reduced trace argument of Lemma~\ref{lem:cubic-trace-check},
while the larger odd exponents follow from the Cayley--Hamilton,
commutator-divisibility, and trace-growth reductions.

\section*{Funding}
This work was supported by the Project of Guangdong University of Foreign
Studies (Grant No.~2024RC063).

\end{document}